\newtheorem{thm}{Theorem}
\newtheorem{lema}[thm]{Lemma}
\newtheorem{prop}[thm]{Proposition}
\newcommand\scalemath[2]{\scalebox{#1}{\mbox{\ensuremath{\displaystyle #2}}}}
\newcommand{\Z}{\mathbb{Z}}
\newcommand{\Q}{\mathbb{Q}}
\newcommand{\G}{\Gamma}
\newcommand{\N}{\mathbb{N}}
\newcommand{\T}{\mathrm{T}}
\newcommand{\R}{\mathbb{R}}
\newcommand{\m}{\mathcal{M}}
\newcommand{\W}{\mathcal{W}}
\newcommand{\rG}{\mathrm{G}}
\newcommand{\rK}{\mathrm{K}}
\newcommand{\rP}{\mathrm{P}}
\newcommand{\rU}{\mathrm{U}}
\newcommand{\rM}{\mathrm{M}}
\newcommand{\rS}{\mathrm{S}}
\newcommand{\tmt}[4]{\left({#1\atop #3}{#2\atop #4}\right)}
\newcommand{\eqr}[1]{\mbox{(\ref{eq:#1})}}
\newcommand{\ie}{i.e.\ }
\newcommand{\mr}[1]{\mathrm{#1}}
\newcommand{\C}{\mathbb{C}}
\newcommand{\tm}{\widetilde{\m}}
\newcommand{\Sp}{\mathrm{Sp}_4}
\title[Euler Characteristic and  Cohomology of $\Sp(\Z)$ with nontrivial coefficients] {Euler Characteristic and  Cohomology of $\Sp(\Z)$ with nontrivial coefficients} 
\author{Jitendra Bajpai, Ivan Horozov, Matias Moya Giusti} 
\address{Mathematisches Institut, Georg-August Universit\"at G\"ottingen, D-37073 Germany.}
\email{jitendra@math.uni-goettingen.de}
\curraddr{Institut f\"ur Geometrie, Technische Universit\"at Dresden, Germany}
\email{jitendra.bajpai@tu-dresden.de}
\address{Graduate Center, City University of New York, 365 5th Ave, New York, NY 10016, USA.}
\curraddr{BCC, City University of New York, 2155 University Ave, Bronx, NY 10453, USA.}\email{ivan.horozov@bcc.cuny.edu}
\address{LAMA - Universit\'e Paris-Est Marne-la-Vall\'ee, 77454 - Marne-la-Valle, France.} 
\email{matias-victor.moyagiusti@u-pem.fr}
\subjclass[2010]{11F75;11F70;11F06;11F22}  
\keywords{Symplectic group, Borel-Serre compactification, Cuspidal and Eisenstein cohomology, Euler characteristic, Group cohomology}
\begin{document}
\date{\today}

%----------------------------------------------------------------------------------------
%	ABSTRACT
%----------------------------------------------------------------------------------------
\begin{abstract}

In this article, the cohomology of the arithmetic group $\Sp(\Z)$ with coefficients in any highest weight irreducible representation $\m_\lambda$ has been studied.  Euler characteristic with coefficients in $\m_\lambda$ has been carried out in detail. Combining the results obtained on Euler characteristic and the work of Harder on Eisenstein cohomology~\cite{Harder2012}, the description of the cuspidal cohomology has been achieved. 
At the end, we employ our study to compute the dimensions for the cohomology spaces $H^{\bullet}(\Sp(\Z), \m_\lambda)$.

\end{abstract}

\maketitle

\tableofcontents

%----------------------------------------------------------------------------------------
%	INTRODUCTION
%----------------------------------------------------------------------------------------
  
\section{Introduction}\label{intro} 
The cohomology theory of arithmetic groups has played an important role in understanding automorphic forms and the geometry associated to our underlying arithmetic group. An important problem in this theory is to obtain the description of the full cohomology with respect to arbitrary coefficient systems in terms of automorphic forms. This question is quite difficult in general. In this article, we set our focus on the arithmetic group $\Sp(\Z)$.

Let $\rG$ be a semisimple algebraic group defined over $\Q$, $\rK_\infty \subset \rG(\R)$ be a maximal compact subgroup and $\rS = \rG(\R)/\rK_\infty$ be the corresponding symmetric space. If $\Gamma \subset \rG(\Q)$ is an arithmetic subgroup then every representation $(\rho, \m)$ of $\rG_\C$ defines in a natural way a sheaf $\tm$ on the locally symmetric space $\rS_\Gamma = \Gamma \backslash \rS$. One has the isomorphism $H^\bullet(\Gamma, \m) \cong H^\bullet(\rS_\Gamma, \tm),$ (for details see Chapter 7 of~\cite{BoWa}). On the other hand, let $\overline{\rS}_\Gamma$ denote the Borel-Serre compactification of $\rS_\Gamma$, then the inclusion $i:\rS_\Gamma \hookrightarrow\overline{\rS}_\Gamma$, which is an homotopy equivalence, determines a sheaf $i_\ast\tm$  on $\overline{\rS}_\Gamma$ and induces a canonical isomorphism in cohomology
\begin{equation} \label{eq:Isomorphism BS}
	H^\bullet(\rS_\Gamma, \tm) \cong H^\bullet(\overline{\rS}_\Gamma, i_\ast \tm).
\end{equation}
From now on we will simply denote $i_\ast\tm$ by $\tm$. For details on Borel-Serre compactification we refer the interested reader to ~\cite{BoSe73, Harder2018}.

The choice of a maximal $\Q$-split torus $\T$ of $\rG$ and a system of positive roots $\Phi^+$ in $\Phi(\rG, \T)$ determines a set of representatives for the conjugacy classes of $\Q$-parabolic subgroups, denoted by $\mathcal{P}_\Q(\rG)$, namely the standard $\Q$-parabolic subgroups. The boundary of the Borel-Serre compactification $\partial \rS_\Gamma = \overline{\rS}_\Gamma - \rS_\Gamma$ is a finite union of the boundary components $\partial_{\rP}$ for $\rP \in \mathcal{P}_\Q(\rG)$, i.e.
\begin{equation} \label{SpesSeqIntro}
\partial \rS_\Gamma = \bigcup_{\rP \in \mathcal{P}_\Q(\rG)} \partial_{\rP}, \nonumber
\end{equation}
and this covering determines a spectral sequence in cohomology abutting to the cohomology of the boundary
\begin{equation}
E^{p, q}_{1} = \bigoplus_{prk(\rP)=p+1} H^q(\partial_{\rP}, \tm) \Rightarrow H^{p+q}(\partial \rS_\Gamma, \tm)
\end{equation}
where $prk(\rP)$ denotes the parabolic rank of $\rP$ (the dimension of the maximal $\Q$-split torus in the center of the Levi quotient $\rM$ of $\rP$). When the $\Q$-rank of $\rG$ is $2$, the aforementioned spectral sequence is just a long exact sequence in cohomology. 

In this article we set our focus on the arithmetic group $\Gamma = \Sp(\Z)$. We use the spectral sequence ~\eqref{SpesSeqIntro} to determine the cohomology space of the boundary of the Borel-Serre compactification of the locally symmetric space $\rS_\Gamma$ associated to the arithmetic subgroup $\Sp(\Z) \subset \Sp(\R)$ and describe the Eisenstein cohomology by following ~\cite{Harder2012}. We continue to determine the Euler characteristic with respect to any finite dimensional irreducible representation $\m_{\lambda}$ of $\Sp$ (for this we use ~\cite{Horozov2005}). 

As an immediate application of the results obtained on Euler characteristic we are able to determine the dimension of the cuspidal cohomology of the symmetric space $\rS_\Gamma$ associated to $\Sp(\Z)$. Another application is the description of the dimension of the cohomology spaces of $\Sp(\Z)$ with respect to every finite dimensional highest weight representation of $\Sp$.

We can summarize the main ideas and results of this paper in a few lines. We use a formula in \cite{Horozov2005} to determine the Euler characteristic of the group $\Sp(\Z)$ with respect to every $\m_\lambda$ finite dimensional irreducible representation of $\Sp$ with highest weight $\lambda$. As we have already mentioned, one has an isomorphism of the cohomology spaces $H^q(\Sp(\Z), \m_\lambda) \cong H^q(\rS_\Gamma, \tm_\lambda)$. On the other hand, one has a decomposition of the cohomology of $\rS_\Gamma$ as the direct sum of the cuspidal and the Eisenstein cohomology
\[
H^q(\rS_\Gamma, \tm_\lambda \otimes \C) = H_{cusp}^q(\rS_\Gamma, \tm_\lambda \otimes \C) \oplus H_{Eis}^q(\rS_\Gamma, \tm_\lambda \otimes \C).
\]
One could consider the contributions $\chi_{Eis}(\lambda), \chi_{cusp}(\lambda)$ to the Euler characteristic comming from the Eisenstein and cuspidal part, respectively. Therefore the homological Euler  characteristic $\chi_h(\lambda)$ of $\Sp(\Z)$ with respect to $\m_\lambda$, is the sum of $\chi_{Eis}(\lambda)$ and $\chi_{cusp}(\lambda)$. The Eisenstein cohomology has been determined by Harder in \cite{Harder2012}, so one can give a formula for $\chi_{Eis}(\lambda)$. On the other hand, by using \cite{MokTil2002}, one knows that the cuspidal cohomology is always concentrated in degree $3$. Therefore one obtains the identity
\[
	dim(H_{cusp}^3(\rS_\Gamma, \tm_\lambda \otimes \C)) = \chi_h(\lambda) - \chi_{Eis}(\lambda)
\]
determining a formula to calculate the dimension of the cuspidal cohomology. Let $H_{!}^q(\rS_\Gamma, \tm_\lambda)$ denote the inner cohomology (that is the kernel of the natural restriction morphism $r^q:H^q(\rS_\Gamma, \tm_\lambda) \rightarrow H^q(\partial \rS_\Gamma, \tm_\lambda)$). In the case of $\Sp$ one has, by \cite{MokTil2002}, that  $H_{!}^q(\rS_\Gamma, \tm_\lambda) \otimes \C \cong H_{cusp}^q(\rS_\Gamma, \tm_\lambda \otimes \C)$, therefore this also determines a formula for the dimension of the inner cohomology. By similar arguments one can also calculate the dimension of the group cohomology of $\Sp(\Z)$.

The main results of the paper are given in:
\begin{itemize}
\item Theorem ~\ref{Hom Symm}, where we describe a formula for the Euler characteristics with respect to the symmetric power representations. 
\item Theorem ~\ref{Homological formula}, describing a formula for the Euler characteristics for general highest weights.
\item Theorem ~\ref{Cuspidal Dim}, where a formula for the dimension of the cuspidal cohomology is determined.
\item Theorem ~\ref{Dim}, that gives a formula for the dimensions of the group cohomology spaces $H^\bullet(\Sp(\Z), \m_\lambda)$.
\end{itemize}

We end this section by giving a quick overview on the organization of this article. 

In Section~$\ref{preli}$ we introduce the notation and present some brief introduction to the results that we use in the paper.

In Section~\ref{nerve} we give a full description of the boundary cohomology of the Borel-Serre compactification of the locally symmetric space associated to $\Sp(\Z)$. We took this opportunity to discuss about the boundary cohomology in detail and in the form we wanted in this article. In this section we are also using the methods described in \cite{BHHM2018}. We feel that the approach we took is more naive and easy     to follow. However, the details could easily be traced down following the work of Harder in~\cite{Harder2012}. 

Section~\ref{Eisenstein} gives the basics about the Eisenstein cohomology and summarize the results from~\cite{Harder2012}. This plays a crucial role in determining the dimension of the space of cuspidal cohomology and the dimension of the cohomology of $\Sp(\Z)$ discussed in the last two sections, \ie Sections ~\ref{Dim cuspidal} and ~\ref{Dim Coho}.

Section~\ref{Euler char} supplies one of the most important tools to achieve the goal. We make use of Yang's work~\cite{Yang}, on classification of torsion elements of $\Sp(\Z)$ to compute their centralizers which eventually gives us the orbifold Euler characteristics. 

In Section ~\ref{Traces} we compute the trace of each representative of the conjugacy classes of torsion elements in $\Sp(\Z)$ with respect to any finite dimensional irreducible representation of $\Sp$. We use these results to give the full description of the homological Euler characteristic of $\Sp(\Z)$ in Section~\ref{homeuler}. Finally in the last two sections we introduce some important applications of the results obtained. First, in Section \ref{Dim cuspidal} we give the description of the dimensions of the cuspidal cohomology of $\rS_\Gamma$. Finally, in the last section we determine the dimension of the cohomology spaces of $\Sp(\Z)$ with respect to every highest weight finite dimensional irreducible representation.

In this article we use the notation $H^\bullet$ to denote the direct sum of the cohomology spaces in every degree. So, for example, with this notation
\[
	H^\bullet(\Sp(\Z), \widetilde{\mathcal{M}}_\lambda) = \bigoplus_{q \geq 0} H^q(\Sp(\Z), \widetilde{\mathcal{M}}_\lambda).
\]
Among the many related works in the literature (see \cite{GrossPollack}, \cite{Faltings90}, \cite{Tsushima83}), we would like to mention the papers \cite{Dan2015} and \cite{Taibi2017} where the cuspidal cohomology of $\Sp(\Z)$ is also calculated. The main differences with these papers are the methods used and the explicit and simple dimension formulas we present among the main theorems. In fact, the importance of this article lies in providing significant and practical dimension formulas and the simplicity of the approach introduced. 

%----------------------------------------------------------------------------------------
%	  Section ---- Preliminaries 
%----------------------------------------------------------------------------------------

\section{Preliminaries}\label{preli}
This section quickly review the basic properties of $\Sp$ and familiarize the reader with the notations to be used throughout the article. We discuss the corresponding locally symmetric space, Weyl group, the associated spectral sequence and Kostant representatives of the standard parabolic subgroups.

\subsection{Structure Theory}\label{sp4}
Consider the algebraic group $\Sp$ over $\Q$ which is  defined for every $\Q$-algebra $A$ by
\[
	\Sp(A) =  \left\{ g \in \mr{GL}_4(A) \mid g^t J g = J  \right\} ,
\]
 where
\[J = \left( \begin{array}{cc}
0 & id_2 \\
-id_2 & 0 \end{array} \right) \mbox{, with }
id_2 = \left( \begin{array}{cc}
1 & 0 \\
0 & 1 \end{array} \right).
\]

Consider the arithmetic subgroup $\Sp(\Z) \subset \Sp(\Q)$ and the maximal compact subgroup $\rK_\infty \subset \Sp(\R)$ defined by 
\[
	\rK_\infty = \left\{ \left( \begin{array}{cc}
A & B \\
-B & A \end{array}  \right) \mid A + i B \in \rU_2(\mathbb{C})\right\} \subset \Sp(\R)\,.
\] 
From now on throughout the article let $\Gamma$ denote the arithmetic group $\Sp(\Z)$. Consider the symmetric space $\rS = \Sp(\R) / \rK_\infty$ and let $\m$ be a representation of $\Sp$. $\G$ acts naturally on $\m$ and defines a sheaf $\tm$ on $\rS_\Gamma = \Gamma \backslash \rS$. As mentioned in the introduction, one has an isomorphism
 \[
 H^\bullet(\Gamma, \m) \cong H^\bullet(\rS_\Gamma, \tm) . 
 \]
and as discussed earlier, if $\overline{\rS}_\Gamma$ is the Borel-Serre compactification of $\rS_\Gamma$ then following ~\eqr{Isomorphism BS}, we get a natural isomorphism between the cohomology spaces $H^\bullet(\overline{\rS}_\Gamma, \tm)$ and $ H^\bullet(\rS_\Gamma, \tm)$.

\subsection{Root System}

Consider the maximal torus $\mathrm{H}$ of $\Sp(\C)$ defined by the subgroup of diagonal matrices $\left\{ diag( h_1,  h_2, h_1^{-1}, h_2^{-1}) \mid h_1, h_2 \in \mathbb{C}^\ast\right\} \subset \Sp(\C)$. Let $\mathfrak{g}$ denote the Lie algebra $\mathfrak{sp}_{4}$ and let $\mathfrak{h} \subset \mathfrak{g}_\mathbb{C}$ be the complex Lie subalgebra associated to $\mathrm{H}$. The root system $\Phi = \Phi(\mathfrak{g}_\mathbb{C}, \mathfrak{h})$ is of type $C_2$. Let $\varepsilon_1, \varepsilon_2 \in \mathfrak{h}^\ast$ be defined by $\varepsilon_1(X) = h_1$ and $\varepsilon_2(X)=h_2$ for $X=diag(h_1,h_2,-h_1,-h_2) \in \mathfrak{h}$. Then the root system $\Phi$ is given by $\left\{\pm \varepsilon_1 \pm \varepsilon_2, \pm 2\varepsilon_1, \pm 2\varepsilon_2 \right\}$, a set of positive roots $\Phi^{+}$ is $ \left\{\varepsilon_1+\varepsilon_2, \varepsilon_1-\varepsilon_2, 2\varepsilon_1, 2\varepsilon_2\right\}$ and the system of simple roots $\Delta$ is $\left\{\alpha_1=\varepsilon_1-\varepsilon_2,\alpha_2= 2\varepsilon_2\right\}$. Finally, we denote $\delta = \frac{1}{2}\sum_{\alpha \in \Phi^+} \alpha = 2 \varepsilon_1 + \varepsilon_2$.

\subsection{Standard $\mathbb{Q}$-Parabolic Subgroups}\label{standard}

The standard $\Q$-parabolic subgroups of $\Sp$ with respect to the given $\mathbb{Q}$-root system and system of positive roots will be described in this subsection. As $\Delta$ has just two elements, we have three proper standard $\mathbb{Q}$-parabolic subgroups, one minimal and two maximal ones. The maximal $\Q$-parabolics $\rP_1, \rP_2$ are given by 
\[ 
\rP_1(A) = \left\{ \left( \begin{array}{cccc}
\ast & \ast & \ast & \ast \\
0 & \ast & \ast & \ast \\
0 & 0 & \ast & 0 \\
0 & \ast & \ast & \ast \end{array}  \right) \in \mr{GL}_4(A) \right\} \cap \Sp(A)
\]
and
\[ 
\rP_2(A) = \left\{ \left( \begin{array}{cccc}
\ast & \ast & \ast & \ast \\
\ast & \ast & \ast & \ast \\
0 & 0 & \ast & \ast \\
0 & 0 & \ast & \ast \end{array}  \right) \in \mr{GL}_4(A) \right\} \cap \Sp(A),
\]
for every $\Q$-algebra $A$.

Note that the minimal  $\Q$-parabolic $\rP_0$ is simply the group $\rP_1 \cap \rP_2$. Therefore, the corresponding Levi quotients are given by
\[ 
\rM_0 =  \mathbb{G}_m \times \mathbb{G}_m, \quad \rM_1 = \mathbb{G}_m \times \mr{SL}_2 \quad \mbox{ and } \quad \rM_2 = \mr{GL}_2.
\]

\subsection{The Irreducible Representations}\label{irreducible}
The root system $\Phi$ with the usual system of positive roots $\Phi^+$, has fundamental weights $\lambda_1, \lambda_2: \mathfrak{h} \longrightarrow \C$ given by $\lambda_1=\varepsilon_1$ and $\lambda_2=\varepsilon_1+\varepsilon_2$. Thus the irreducible finite dimensional representations of $\Sp$ are determined by their highest weights, which in this case are the linear functionals of the form $m_1 \lambda_1 + m_2 \lambda_2$ with $m_1, m_2$ non-negative integers. We fix an irreducible algebraic representation $(r_\lambda, \m_\lambda)$ of $\Sp$ with highest weight $\lambda=m_1\lambda_1+m_2\lambda_2$. Since our group $\Sp$ is $\Q$-split, the representation $\m_\lambda$ is defined over $\Q$ and we will consider $\m_\lambda$ to be the corresponding $\Q$-vector space.

\subsection{Kostant Representatives} \label{KostantRep}
It is known that the Weyl group $\mathcal{W}=\mathcal{W}(\mathfrak{g}, \mathfrak{h})$ is given by $8$ elements. They are listed in the first column of Table \ref{table1} and described in the second column as a product of simple reflections $s_1$ and $s_2$, associated to the simple roots $\alpha_1$ and $\alpha_2$ respectively. In the third column we make a note of their lengths and in the last column we describe the element $w \cdot \lambda = w(\lambda + \delta) - \delta$, where the pair $(a, b)$ denotes the element $a\varepsilon_1 + b\varepsilon_2 \in \mathfrak{h}^\ast$.  

\begin{table}[ht]
\centering
\begin{tabular}{clcl}
\hline \noalign{\smallskip}
Label & $w$  & $\ell(w)$ & $w \cdot \lambda$\\
\noalign{\smallskip}\hline\noalign{\smallskip}
$w_0$ & $1$ & $0$ & $(m_1+m_2, m_2)$\\ 
$w_1$ & $s_1$ & $1$ & $(m_2-1, m_1+m_2+1)$\\
$w_2$ & $s_2$ & $1$ & $(m_1+m_2, -m_2-2)$\\
$w_3$ & $s_1 \circ s_2$ & $2$ & $(-m_2-3, m_1+m_2+1)$\\
$w_4$ & $s_2 \circ s_1$  & $2$ & $(m_2-1, -m_1-m_2-3)$\\
$w_5$ & $s_1 \circ s_2 \circ s_1$ & $3$ & $(-m_1-m_2-4, m_2)$\\
$w_6$ & $s_2 \circ s_1 \circ s_2$  & $3$ & $(-m_2-3, -m_1-m_2-3)$\\
$w_7$ & $s_1 \circ s_2 \circ s_1 \circ s_2$  & $4$ & $(-m_1-m_2-4, -m_2-2)$\\
\noalign{\smallskip}\hline
\end{tabular}
\vspace{0.4cm}
\caption{The Weyl Group of $\Sp$}\label{table1}
\end{table}

Let $\Phi^+$ and $\Phi^-$ denote the set of positive and negative roots respectively. The Weyl group acts naturally on the set of roots. For each $i \in \left\{0, 1, 2\right\}$, let $\Delta(\mathfrak{u}_i)$ denote the set consisting of every root whose corresponding root space is contained in the Lie algebra  $\mathfrak{u}_i$ of the unipotent radical of $\rP_i$. The set of Weyl representatives $\W^{\rP_i} \subset \W$ associated to the parabolic subgroup $\rP_i$ (see \cite{Kostant61}) is defined by
\[
	\mathcal{W}^{\rP_i} = \left\{w \in \mathcal{W} : w(\Phi^-) \cap \Phi^+ \subset \Delta(\mathfrak{u}_i)\right\}.
\]

Clearly $\W^{\rP_0}=\W$ and, by using the table, one can see that 
\begin{eqnarray}\label{eq:Weyl12} 
\W^{\rP_1}  = \left\{w_0, w_1, w_3, w_5\right\}\quad  \mbox{ and} \quad \W^{\rP_2}  =  \left\{ w_0, w_2, w_4, w_6 \right\} \,.
\end{eqnarray}

\subsection{Boundary of the Borel-Serre compactification}

In general, the boundary of the Borel-Serre compactification $\partial \rS_\Gamma = \overline{\rS}_\Gamma \setminus \rS_\Gamma$ is the union of subspaces indexed by the standard $\Q$-parabolic subgroups
\[
    \partial \rS_\Gamma =  \cup_{\rP \in \mathcal{P}_\Q(\Sp)} \partial_{\Gamma, \rP}.
\]
To simplify the notation we will denote by $\partial_{\rP}$ the space $\partial_{\Gamma, \mathrm{P}}$. In this case, as we are in the rank $2$ case, this covering defines a long exact sequence (of Mayer-Vietoris) in cohomology. We will use this long exact sequence to describe the cohomology of the boundary of the Borel-Serre compactification.

%----------------------------------------------------------------------------------------
%	  Section ---- Cohomology of the parabolic subgroups and Boundary Cohomology
%----------------------------------------------------------------------------------------

\section{Boundary Cohomology}\label{nerve}
In this section we make use of certain spectral sequences to obtain a description of the cohomology spaces $H^q(\partial_{\mathrm{P}}, \widetilde{\m})$ of the faces of the boundary associated to the parabolic subgroups of $\Sp$. Using this information, we give the details of the boundary cohomology $H^{\bullet}(\partial \rS_{\G}, \m_{\lambda})$. For more details the reader can see ~\cite{Harder2012} where boundary and Eisenstein cohomology has been discussed. The goal of this section is to present this result in a convenient way to describe the dimension of the Eisenstein cohomology and to be used in the Sections ~\ref{Dim cuspidal} and ~\ref{Dim Coho} together with the Euler characteristic. 

\subsection{The cohomology spaces $H^\bullet(\partial_{\mathrm{P}}, \tm)$}\label{Cohomology Parab}

For a parabolic $\rP$, let $\rM$ be its Levi quotient, $\mathrm{U}$ its unipotent radical and $\mathfrak{u}$ the Lie algebra of $\mathrm{U}(\R)$. We denote by $\pi_\rP : \rP \rightarrow \rM = \rP/\mathrm{U}$ the natural projection. We write $\Gamma^\mathrm{M} = \pi_\mathrm{P}(\Gamma \cap \mathrm{P(\R)})$ and $\rK_\infty^\mathrm{M} = \pi_\mathrm{P}(\rK_\infty \cap \mathrm{P}(\R))$. It is known that $\Gamma^\mathrm{M}, \rK_\infty^\mathrm{M} \subset \left.^\circ \mathrm{M}_\mathrm{P} (\mathbb{R}) \right.$ where for an algebraic group $\mathrm{G}$ over $\Q$ we denote by $X_\Q(\mathrm{G})$ the group of characters of $\mathrm{G}$ defined over $\Q$ and

$$ \left.^\circ \mathrm{G} \right. = \bigcap_{\chi \in X_\Q(\mathrm{G})} \chi^2. $$
One has a fibration 
\begin{equation}
\Gamma_\mathrm{U} \backslash \mathrm{U}(\R) \rightarrow \partial_\mathrm{P} \rightarrow \rS_{\Gamma}^\mathrm{M} \nonumber
\end{equation}

where we write $$ \rS_{\Gamma}^\mathrm{M} = \Gamma^\mathrm{M} \backslash \left.^\circ \rM(\R) \right. / \rK_\infty^\mathrm{M}$$

This fibration defines a spectral sequence in cohomology abutting to the cohomology of $\partial_\mathrm{P}$, and in this case it is well known that the spectral sequence degenerates in degree $2$ and gives a decomposition

\begin{equation}
    H^k(\partial_{\mathrm{P}}, \m) = \bigoplus_{p+q=k} H^p(\rS_{\Gamma}^\mathrm{M}, \widetilde{H^q(\mathfrak{u}, \m)}). \nonumber
\end{equation}

Finally, by using Kostant's theorem (see~\cite{Kostant61}) one can decompose $H^q(\mathfrak{u}, V)$ as a direct sum of irreducible representations of $\mathrm{M}$ indexed by some subset $\mathcal{W}^\mathrm{P} \subset \mathcal{W}$ of the Weyl group (the subset of Weyl representatives, see Section \ref{KostantRep}). One finally has

\begin{equation}\label{eq:hpm}
    H^k(\partial_{\mathrm{P}}, \tm) = \bigoplus_{w \in \mathcal{W}^\mathrm{P}} H^{k - \ell(w)}(\rS_{\Gamma}^\mathrm{M}, \tm_{w \cdot \lambda}) 
\end{equation}

where $\m_{w \cdot \lambda}$ denotes the irreducible representation of $\mathrm{M}$ with highest weight $w \cdot \lambda$. In the following subsections we will use \eqr{hpm} to describe the cohomology spaces $H^\bullet(\partial_{\mathrm{P}}, \tm)$ for each $\rP$.  For more details on this decomposition see \cite{Schwermer1994}.

In what follows, we will denote by $\mathrm{M}_i$ and $\partial_i$ the Levi quotient of the parabolic $\mathrm{P}_i$ and its corresponding subspace of the boundary, respectively.

\subsubsection{Cohomology of $\partial_0$} From Subsection~\ref{standard}, we know that $\rM_0 \cong \mathbb{G}_m \times \mathbb{G}_m$ and therefore $H^{q}(\rS_{\Gamma}^\mathrm{M_{0}}, \tm_{w \cdot \lambda}) = 0$ for $q > 0$. Following the decomposition ~\eqr{hpm} and  using the Table~\ref{table1}, we write:
\begin{align}
H^0(\partial_0, \tm_\lambda) & =  H^{0}(\rS_{\Gamma}^\mathrm{M_{0}}, \tm_{\lambda}) \nonumber\\
H^1(\partial_0, \tm_\lambda) & =  H^{0}(\rS_{\Gamma}^\mathrm{M_{0}}, \tm_{w_1 \cdot \lambda}) \oplus H^{0}(\rS_{\Gamma}^\mathrm{M_{0}}, \tm_{w_2\cdot\lambda}) \nonumber \\
H^2(\partial_0, \tm_\lambda) & =  H^{0}(\rS_{\Gamma}^\mathrm{M_{0}}, \tm_{w_3\cdot\lambda}) \oplus H^{0}(\rS_{\Gamma}^\mathrm{M_{0}}, \tm_{w_4\cdot \lambda}) \nonumber\\
H^3(\partial_0, \tm_\lambda) & =  H^{0}(\rS_{\Gamma}^\mathrm{M_{0}}, \tm_{w_5\cdot\lambda}) \oplus H^{0}(\rS_{\Gamma}^\mathrm{M_{0}}, \tm_{w_6\cdot\lambda}) \nonumber\\
H^4(\partial_0, \tm_\lambda) & =  H^{0}(\rS_{\Gamma}^\mathrm{M_{0}}, \tm_{w_7\cdot\lambda}) \,.\nonumber
\end{align}

The fact that
\begin{equation}
\left( \begin{array}{cccc}
 -1 &  &  & \\
 &  1  &  & \\
 &  &  1  & \\
 &  &  &  -1 \end{array}  \right), 
\left( \begin{array}{cccc}
 1  &  &  & \\
 & -1  &  & \\
 &  & -1  & \\
 &  &  &  1 \end{array}  \right) \in \rM_0(\Z) \nonumber
\end{equation}
has the effect that, if $w \cdot \lambda = n_1 \epsilon_1 + n_2 \epsilon_2$ then
\begin{equation}
H^{0}(\rS_{\Gamma}^\mathrm{M_{0}}, \tm_{w \cdot\lambda}) = \left\{\begin{array}{cccc}  
& \Q \,,  &  n_1, n_2 \mbox{ even} \\ 
&\\
& 0 \,, & \mr{otherwise}
\end{array}\qquad\,. \right . \nonumber
\end{equation}

Hence one has the following results. 
\begin{itemize}
    \item If $m_1$ and $m_2$ are even then
\begin{equation}
H^q(\partial_0, \tm_\lambda) =\left\{\begin{array}{cccc}  
& \mathbb{Q} \,,  &  q= 0, 1, 3, 4 \\ 
&\\
& 0 \,, & \mr{otherwise}
\end{array}\qquad\,, \right . \nonumber
\end{equation}

\item If $m_1$ is even and $m_2$ is odd then
\begin{equation}
H^q(\partial_0, \tm_\lambda) =\left\{\begin{array}{cccc}  
& \mathbb{Q} \,,  &  q= 1, 3 \\ 
&\\
& \mathbb{Q} \oplus \mathbb{Q} \,,  &  q= 2 \\ 
&\\
& 0 \,, & \mr{otherwise}
\end{array}\qquad\,, \right . \nonumber
\end{equation}
\item and finally, $H^q(\partial_0, \tm_\lambda)=0$ otherwise.
\end{itemize}

\subsubsection{Cohomology of $\partial_1$}\label{p1} 

From Subsection~\ref{standard}, we have $\rM_1 \cong \mathbb{G}_m \times \mr{SL}_2$. For an element $w \in \mathcal{W}^{\mathrm{P}_1}$, the highest weight $w \cdot \lambda$ written in Table~\ref{table1} as a pair $(a_w, b_w)$ means that we are working with the highest weight $a_w$ in $\mathrm{GL}_1$ and $b_w$ in $\mathrm{SL}_2$ while considering the usual fundamental weight in $\mathrm{SL}_2$. In this case $\mathrm{K}_\infty^{\rM_1} = \mathrm{SO}(2, \R)$ and 

\begin{equation} 
\left( \begin{array}{cccc}
 -1 &  &  & \\
 &  1  &  & \\
 &  &  1  & \\
 &  &  &  -1 \end{array}  \right), 
\left( \begin{array}{cccc}
 1  &  &  & \\
 & -1  &  & \\
 &  & -1  & \\
 &  &  &  1 \end{array}  \right) \in \rM_1(\Z) \cap Z(\mathrm{M}_1)\, \nonumber
\end{equation}

where $Z(\mathrm{M}_1)$ denotes the center of the Levi quotient $\mathrm{M}_1$.  Therefore, if 
\[
\rS^{\mathrm{SL}_2} = \mathrm{SL}_2(\Z) \backslash \mathrm{SL}_2(\R)/\mathrm{SO}(2, \R) \quad \mbox{ and } \quad w \cdot \lambda = a_w \epsilon_1 + b_w \epsilon_2
\]then 
\begin{equation}
H^q(\rS_{\Gamma}^\mathrm{M_1}, \tm_{w \cdot \lambda}) = \left\{\begin{array}{cccc}  
& H^q (\rS^{\mathrm{SL}_2}, \left. \tm_{w \cdot \lambda}\right|_{\mr{SL}_2(\Z)}) \,,  &  a_w, b_w \mbox{ even} \\ 
&\\
& 0 \,, & \mr{otherwise}
\end{array}\qquad\,, \right . \nonumber
\end{equation}

where $\left.\tm_{w \cdot \lambda}\right|_{\mr{SL}_2(\Z)}$ denotes the sheaf on $\rS^\mathrm{\mathrm{SL}_2}$ defined by the restriction of the representation $\m_{w \cdot \lambda}$ to the $\mathrm{SL}_2$ component of $\mathrm{M}_1$. By the Eichler-Shimura isomorphism one has

\begin{equation} \label{Eichler-Shimura}
H^q (\rS^{\mathrm{SL}_2}, \left. \tm_{w \cdot \lambda} \right|_{\mr{SL}_2(\Z)}) \otimes \C = \left\{\begin{array}{cccc}
& \mathcal{S}_{b_w+2} \oplus \overline{\mathcal{S}}_{b_w+2} \oplus Eis_{b_w+2} \,, &  q=1, b_w \neq 0 \\ 
&\\
& \mathbb{Q} \,,  &  q = b_w = 0 \\ 
&\\
& 0 \,, & \mr{otherwise}
\end{array}\qquad\,, \right . \nonumber
\end{equation}

where $\mathcal{S}_{b_w+2}$, $\overline{\mathcal{S}}_{b_w+2}$ denote the space of holomorphic and antiholomorphic cuspidal forms for $\mathrm{SL}_2(\Z)$ of weight $b_w+2$ and $Eis_{b_w+2}$ denotes the space of Eisenstein cohomology, which is isomorphic to the boundary cohomology of $\rS^{\mathrm{SL}_2}$.

Now, following~\eqr{hpm}, ~\eqr{Weyl12} and Table~\ref{table1}, we write:
\begin{eqnarray}
H^0(\partial_1, \tm_\lambda) & =&  H^0(\rS_{\Gamma}^\mathrm{M_1}, \tm_{\lambda}) \nonumber\\
H^1(\partial_1, \tm_\lambda) & =&  H^{1}(\rS_{\Gamma}^\mathrm{M_1}, \tm_{w_0 \cdot\lambda}) \oplus H^{0}(\rS_{\Gamma}^\mathrm{M_1}, \tm_{w_1 \cdot \lambda}) \nonumber \\
H^2(\partial_1, \tm_\lambda) & =& H^{1}(\rS_{\Gamma}^\mathrm{M_1}, \tm_{w_1 \cdot \lambda}) 
 \oplus H^{0}(\rS_{\Gamma}^\mathrm{M_1}, \tm_{w_3 \cdot \lambda}) \nonumber\\
H^3(\partial_1, \tm_\lambda) & = & H^{1}(\rS_{\Gamma}^\mathrm{M_1}, \tm_{w_3 \cdot \lambda}) \oplus H^{0}(\rS_{\Gamma}^\mathrm{M_1}, \tm_{w_5 \cdot \lambda}) \nonumber\\
H^4(\partial_1, \tm_\lambda) & = & H^{1}(\rS_{\Gamma}^\mathrm{M_1}, \tm_{w_5 \cdot \lambda}). \nonumber
\end{eqnarray}

In order to describe $H^\bullet(\partial_1, \tm_\lambda)$ one only has to consider the following four cases

\begin{itemize}
\item If $m_1$ is even and $m_2= 0$, then
\begin{equation}
H^q(\partial_1, \tm_\lambda) =\left\{\begin{array}{cccc}  
& H^0 (\rS_{\Gamma}^\mathrm{M_1}, \tm_{w_0 \cdot \lambda }) \cong \Q \,,  &  q = 0 \\ 
&\\
& H^0 (\rS_{\Gamma}^\mathrm{M_1}, \tm_{w_5 \cdot \lambda}) \cong \Q \,,  &  q = 3 \\ 
&\\
& 0 \,, & \mr{otherwise}
\end{array}\qquad\, \right.. \nonumber
\end{equation}

\item If $m_1$ and $m_2$ are even and $m_2 \neq 0$, then
\begin{equation}
H^q(\partial_1, \tm_\lambda) =\left\{\begin{array}{cccc}  
& H^1 (\rS_{\Gamma}^\mathrm{M_1}, \tm_{w_0 \cdot \lambda}) \,,  &  q = 1 \\ 
&\\
& H^1 (\rS_{\Gamma}^\mathrm{M_1}, \tm_{w_5 \cdot \lambda}) \,,  &  q = 4 \\ 
&\\
& 0 \,, & \mr{otherwise}
\end{array}\qquad\, \right.. \nonumber
\end{equation}

\item On the other hand, if $m_1$ is even and $m_2$ is odd then
\begin{equation}
H^q(\partial_1, \tm_\lambda) =\left\{\begin{array}{cccc}  
& H^1 (\rS_{\Gamma}^\mathrm{M_1}, \tm_{w_1 \cdot \lambda}) \,,  &  q = 2 \\ 
&\\
& H^1 (\rS_{\Gamma}^\mathrm{M_1}, \tm_{w_3 \cdot \lambda}) \,,  &  q = 3 \\ 
&\\
& 0 \,, & \mr{otherwise}
\end{array}\qquad\, \right.. \nonumber
\end{equation}

\item In all the other cases $H^\bullet(\partial_1, \tm_\lambda) = 0$.
\end{itemize}

\subsubsection{Cohomology of $\partial_2$}\label{p2}
Finally, in this case $\rM_2 \cong \mr{GL}_2$. In Table ~\ref{table1}, the element $w \cdot \lambda$ encoded by the pair $(a_w, b_w)$ means that we are working with the usual highest weight representation of $\mr{GL}_2$ associated with the character $a_w \varepsilon_1 + b_w \varepsilon_2$.  Therefore, $H^\bullet(\rS_{\Gamma}^\mathrm{M_2}, \tm_\lambda) = 0$ if $a_w + b_w$ is odd (because $-id_2$ will act as multiplication by $(-1)^{a_w+b_w}$). On the other hand, when $a_w = b_w$ is odd then the representation is one dimensional and 
\begin{equation}
\left( \begin{array}{cccc}
 -1 &  &  & \\
 &  1  &  & \\
 &  &  1  & \\
 &  &  &  -1 \end{array}  \right) \in \rM_2(\Z) \nonumber
\end{equation}
implies that $H^\bullet(\rS_{\Gamma}^\mathrm{M_2}, \tm_\lambda) = 0$. 

In this case, $\mathrm{K}_\infty^{\rM_2} = \mathrm{O}(2, \R)$. Now we suppose that $a_w + b_w$ is even. If
$$\rS^\mathrm{GL_2} = \mathrm{GL}_2(\Z) \backslash \mathrm{GL}_2(\R) / \mathrm{SO}(2, \R)$$
then 
\[
H^q (\rS^{\mathrm{GL}_2}, \tm_{w \cdot \lambda}) \cong H^q (\rS^{\mathrm{SL}_2}, \left. \tm_{w \cdot \lambda}\right|_{\mr{SL}_2(\Z)})
\]
and 
\begin{equation}
H^\bullet(\rS_{\Gamma}^\mathrm{M_2}, \tm_\lambda) = H^\bullet(\rS^\mathrm{GL_2}, \tm_\lambda)^{\mathrm{O}(2, \R)/\mathrm{SO}(2, \R)} \nonumber
\end{equation}
is the space of fixed points under the natural action of $\mathrm{O}(2, \R)/\mathrm{SO}(2, \R)$. 

Finally, one has
\begin{equation}
H^q (\rS_{\Gamma}^{\mathrm{M}_2}, \tm_{w \cdot \lambda}) \otimes \C = \left\{\begin{array}{cccc}
& \mathcal{S}_{a_w-b_w+2} \oplus Eis_{a_w, b_w} \,, &  q=1, a_w-b_w \neq 0 \\ 
&\\
& \mathbb{Q} \,,  &  q = a_w-b_w = 0 \\ 
&\\
& 0 \,, & \mr{otherwise}
\end{array}\qquad\,, \right . \nonumber
\end{equation}
where $Eis_{a_w, b_w}$ is the space of Eisenstein cohomology, isomorphic to the boundary cohomology of $\mathrm{GL}_2(\Z)$ with coefficients in the irreducible representation with highest weight $a_w \varepsilon_1 + b_w\varepsilon_2$.

Now, following~\eqr{Weyl12}, \eqr{hpm}, Table~\ref{table1} and the discussion carried out for $\rP_1$ in Subsection~\ref{p1}, we simply need to analyse the following spaces 
\begin{eqnarray}
H^0(\partial_2, \tm_\lambda) & =&  H^{0}(\rS_{\Gamma}^{\mathrm{M}_2}, \tm_{w_0 \cdot\lambda}) \nonumber\\
H^1(\partial_2, \tm_\lambda) & =&  H^{1}(\rS_{\Gamma}^{\mathrm{M}_2}, \tm_{w_0 \cdot\lambda}) \oplus H^{0}(\rS_{\Gamma}^{\mathrm{M}_2}, \tm_{w_2 \cdot \lambda}) \nonumber \\
H^2(\partial_2, \tm_\lambda) & =& H^{1}(\rS_{\Gamma}^{\mathrm{M}_2}, \tm_{w_2 \cdot \lambda}) \oplus H^{0}(\rS_{\Gamma}^{\mathrm{M}_2}, \tm_{w_4 \cdot \lambda}) \nonumber\\
H^3(\partial_2, \tm_\lambda) & = & H^{1}(\rS_{\Gamma}^{\mathrm{M}_2}, \tm_{w_4 \cdot \lambda}) \oplus H^{0}(\rS_{\Gamma}^{\mathrm{M}_2}, \tm_{w_6 \cdot \lambda}) \nonumber\\
H^4(\partial_2, \tm_\lambda) & = & H^{1}(\rS_{\Gamma}^{\mathrm{M}_2}, \tm_{w_6 \cdot \lambda}) \nonumber
\end{eqnarray}

and therefore, one can see that
\begin{itemize}

\item If $m_1 \not = 0$  is even then
\begin{equation}
H^q(\partial_2, \tm_\lambda) = \left\{\begin{array}{cccc}  
& H^1 (\rS_{\Gamma}^{\mathrm{M}_2}, \tm_{w_0 \cdot \lambda}) \,,  &  q = 1 \\ 
&\\
& H^1 (\rS_{\Gamma}^{\mathrm{M}_2}, \tm_{w_2 \cdot \lambda}) \,,  &  q = 2 \\ 
&\\
& H^1 (\rS_{\Gamma}^{\mathrm{M}_2}, \tm_{w_4 \cdot \lambda}) \,,  &  q = 3 \\ 
&\\
& H^1 (\rS_{\Gamma}^{\mathrm{M}_2}, \tm_{w_6 \cdot \lambda}) \,,  &  q = 4 \\ 
&\\
& 0 \,, & \mr{otherwise}
\end{array}\qquad\,, \right . \nonumber
\end{equation}
\item If $m_1 = 0$ and $m_2$ is even then
\begin{equation}
H^q(\partial_2, \m_\lambda) = \left\{\begin{array}{cccc}  
& H^0 (\rS_{\Gamma}^{\mathrm{M}_2}, \tm_{w_0 \cdot \lambda}) \,,  &  q = 0 \\ 
&\\
& H^1 (\rS_{\Gamma}^{\mathrm{M}_2}, \tm_{w_2 \cdot \lambda}) \,,  &  q = 2 \\ 
&\\
& H^1 (\rS_{\Gamma}^{\mathrm{M}_2}, \tm_{w_4 \cdot \lambda}) \,,  &  q = 3 \\ 
&\\
& 0 \,, & \mr{otherwise}
\end{array}\qquad\,, \right . \nonumber
\end{equation}
\item If $m_1 = 0$ and $m_2$ is odd then
\begin{equation}
H^q(\partial_2, \m_\lambda) = \left\{\begin{array}{cccc}  
& H^1 (\rS_{\Gamma}^{\mathrm{M}_2}, \tm_{w_2 \cdot \lambda}) \,,  &  q = 2 \\ 
&\\
& H^1 (\rS_{\Gamma}^{\mathrm{M}_2}, \tm_{w_4\cdot \lambda}) \oplus H^0 (\rS_{\Gamma}^{\mathrm{M}_2}, \tm_{w_6 \cdot \lambda})  \,,  &  q = 3 \\ 
&\\
& 0 \,, & \mr{otherwise}
\end{array}\qquad\,, \right . \nonumber
\end{equation}

\item Finally, if $m_1$  is odd then $H^q(\partial_2, \tm_\lambda) = 0$.

\end{itemize}
%----------------------------------------------------------------------------------------
%	  Subsection ---- Boundary Cohomology 
%----------------------------------------------------------------------------------------

\subsection{Boundary Cohomology}\label{boundary}

In this subsection we use the results obtained in Subsection~\ref{Cohomology Parab} to describe the cohomology of the boundary. The covering of the boundary of the Borel-Serre compactification defines a spectral sequence in cohomology abutting to the cohomology of the boundary
\[
	E_1^{p, q} = \bigoplus_{prk(P)=(p+1)} H^q(\partial_\rP, \widetilde{\mathcal{M}}_\lambda) \Rightarrow H^{p+q}(\partial \rS_\Gamma, \widetilde{\mathcal{M}}_\lambda).
\]
where $prk(\rP)$ denotes the parabolic rank of $\rP$ (in this case $prk(\rP_1) = prk(\rP_2) = 1$ and $prk(\rP_0)=2$).
As the $\Q$-rank of $\Sp$ is $2$, this spectral sequence can be replaced by the long exact sequence
\begin{equation} 
\cdots \rightarrow H^{q-1}(\partial_0, \tm_\lambda) \rightarrow  H^q(\partial \rS_\Gamma, \tm_\lambda) \rightarrow H^q(\partial_1, \tm_\lambda) \oplus H^q(\partial_2, \tm_\lambda) \rightarrow H^q(\partial_0, \tm_\lambda) \rightarrow \cdots \nonumber
\end{equation} 

and one obtains the cohomology of the boundary by the description of the restriction morphisms $H^\bullet(\partial_i, \tm_\lambda) \rightarrow H^\bullet(\partial_0, \tm_\lambda)$. Finally, by using the results of the previous subsection, this reduces to the well known cases of $\mathrm{GL}_2(\Z)$ and $\mathrm{SL}_2(\Z)$.

Let $\lambda=m_1\lambda_1+m_2\lambda_2$ be the highest weight of the irreducible representation $\m_\lambda$. If $m_1$ is odd, then the fact that $-id_4 \in \Sp(\Z) \cap \mathrm{K}_\infty$ is an element of the center of $\Sp$ has the effect that $\widetilde{\m}_\lambda = 0$. Therefore we are only interested in the case $m_1$ even. This reduces to analyze in total six different subcases. We study these subcases by following a similar analysis as the one described in Section 4 of~\cite{BHHM2018}. As usual, we denote by $H^\bullet_!(\rS^{\mathrm{M}_i}, \tm)$ the inner cohomology (the kernel of the natural restriction to the cohomology of the boundary of the Borel-Serre compactification $r_i:H^\bullet(\rS_{\Gamma}^{\mathrm{M}_i}, \tm) \rightarrow H^\bullet(\partial \rS_{\Gamma}^{\mathrm{M}_i}, \tm)$ of $\rS^{\mathrm{M}_i}$). By using the calculation of the previous subsection, we now summarize the details of boundary cohomology in all the cases as follows:
\subsubsection{Case 1\,: $m_1 = 0$ and $m_2 = 0$ (trivial coefficient system)}
\begin{equation}
H^q(\partial \rS_\Gamma, \tm_\lambda) = \left\{\begin{array}{cccc}  
& \Q \,,  &  q = 0 \\
&\\
& \Q \oplus H^1_!(\rS_{\Gamma}^{\mathrm{M}_2}, \tm_{w_2 \cdot \lambda}) = \Q \,,  &  q = 2 \\ 
&\\
& \Q \oplus H^1_!(\rS_{\Gamma}^{\mathrm{M}_2}, \tm_{w_4 \cdot \lambda}) = \Q \,,  &  q = 3 \\ 
&\\
& \Q \,,  &  q = 5 \\
&\\
& 0 \,, & \mr{otherwise}
\end{array}\qquad\,, \right . \nonumber
\end{equation}

\subsubsection{Case 2\,: $m_1 = 0$ and $m_2 \not = 0$ even}
\begin{equation}
H^q(\partial \rS_\Gamma, \tm_\lambda) = \left\{\begin{array}{cccc}  
& H^1_! (\rS_{\Gamma}^{\mathrm{M}_1}, \tm_{w_0 \cdot \lambda}) \,,  &  q = 1 \\ 
&\\
& H^1_! (\rS_{\Gamma}^{\mathrm{M}_2}, \tm_{w_2 \cdot \lambda}) \,,  &  q = 2 \\ 
&\\
& H^1_! (\rS_{\Gamma}^{\mathrm{M}_2}, \tm_{w_4 \cdot \lambda}) \,,  &  q = 3 \\ 
&\\
& H^1_! (\rS_{\Gamma}^{\mathrm{M}_1}, \tm_{w_5 \cdot \lambda}) \,,  &  q = 4 \\ 
&\\
& 0 \,, & \mr{otherwise}
\end{array}\qquad\,, \right . \nonumber
\end{equation}

\subsubsection{Case 3\,: $m_1 \not = 0$ even and $m_2 = 0$}
\begin{equation}
H^q(\partial \rS_\Gamma, \tm_\lambda) = \left\{\begin{array}{cccc}  
& H^1_! (\rS_{\Gamma}^{\mathrm{M}_2}, \tm_{w_0 \cdot \lambda}) \,,  &  q = 1 \\ 
&\\
& H^1_! (\rS_{\Gamma}^{\mathrm{M}_2}, \tm_{w_2 \cdot \lambda}) \oplus \Q \,,  &  q = 2 \\ 
&\\
& H^1_! (\rS_{\Gamma}^{\mathrm{M}_2}, \tm_{w_4 \cdot \lambda}) \oplus \Q \,,  &  q = 3 \\ 
&\\
& H^1_! (\rS_{\Gamma}^{\mathrm{M}_2}, \tm_{w_6 \cdot \lambda}) \,,  &  q = 4 \\ 
&\\
& 0 \,, & \mr{otherwise}
\end{array}\qquad\,, \right . \nonumber
\end{equation}

\subsubsection{Case 4\,: $m_1 \not = 0$ even and $m_2 \not = 0$ even}
\begin{equation}
H^q(\partial \rS_\Gamma, \tm_\lambda) = \left\{\begin{array}{cccc}  
& \Q \oplus H^1_! (\rS_{\Gamma}^{\mathrm{M}_1}, \tm_{w_0 \cdot \lambda}) \oplus H^1_! (\rS_{\Gamma}^{\mathrm{M}_2}, \tm_{w_0 \cdot \lambda}) \,,  &  q = 1 \\ 
&\\
& H^1_! (\rS_{\Gamma}^{\mathrm{M}_2}, \tm_{w_2\cdot\lambda}) \,,  &  q = 2 \\ 
&\\
& H^1_! (\rS_{\Gamma}^{\mathrm{M}_2}, \tm_{w_4 \cdot \lambda}) \,,  &  q = 3 \\ 
&\\
& \Q \oplus H^1_! (\rS_{\Gamma}^{\mathrm{M}_1}, \tm_{w_5\cdot \lambda}) \oplus H^1_! (\rS_{\Gamma}^{\mathrm{M}_2}, \tm_{w_6 \cdot \lambda}) \,,  &  q = 4 \\ 
&\\
& 0 \,, & \mr{otherwise}
\end{array}\qquad\,, \right . \nonumber
\end{equation}

\subsubsection{Case 5\,: $m_1 = 0$ and $m_2$ odd}
\begin{equation}
H^q(\partial \rS_\Gamma, \tm_\lambda) = \left\{\begin{array}{cccc}  
& \Q \oplus H^1_! (\rS_{\Gamma}^{\mathrm{M}_1}, \tm_{w_1\cdot\lambda}) \oplus H^1_! (\rS_{\Gamma}^{\mathrm{M}_2}, \tm_{w_2\cdot\lambda}) \,,  &  q = 2 \\ 
&\\
& \Q \oplus H^1_! (\rS_{\Gamma}^{\mathrm{M}_1}, \tm_{w_3\cdot\lambda}) \oplus H^1_! (\rS_{\Gamma}^{\mathrm{M}_2}, \tm_{w_4\cdot\lambda}) \,,  &  q = 3 \\ 
&\\
& 0 \,, & \mr{otherwise}
\end{array}\qquad\,, \right . \nonumber
\end{equation}

\subsubsection{Case 6\,: $m_1 \not = 0$ even and $m_2$ odd}
\begin{equation}
H^q(\partial \rS_\Gamma, \tm_\lambda) = \left\{\begin{array}{cccc}  
& H^1_! (\rS_{\Gamma}^{\mathrm{M}_2}, \tm_{w_0\cdot \lambda}) \,,  &  q = 1 \\ 
&\\
& H^1_! (\rS_{\Gamma}^{\mathrm{M}_1}, \tm_{w_1\cdot\lambda}) \oplus H^1_! (\rM_{2}(\Z), \m_{w_2\cdot\lambda}) \,,  &  q = 2 \\ 
&\\
& H^1_! (\rS_{\Gamma}^{\mathrm{M}_1}, \tm_{w_3\cdot\lambda}) \oplus H^1_! (\rM_{2}(\Z), \m_{w_4\cdot\lambda}) \,,  &  q = 3 \\ 
&\\
& H^1_! (\rS_{\Gamma}^{\mathrm{M}_2}, \tm_{w_6\cdot\lambda}) \,,  &  q = 4 \\ 
&\\
& 0 \,, & \mr{otherwise}
\end{array}\qquad\,. \right . \nonumber
\end{equation}

\section{Eisenstein Cohomology}\label{Eisenstein}

The Eisenstein cohomology is known to be a powerful tool to study the natural restriction morphism $$r^q:H^q(\rS_\Gamma, \tm_\lambda) \rightarrow H^q(\partial \rS_\Gamma, \tm_\lambda)$$ to the cohomlogy of the boundary (see for example Theorem 4.11 of \cite{Schwermer83;LNM988}). In this section we give the description of the Eisesntein cohomology following the work of Harder in~\cite{Harder2012}. We summarize the details on Eisenstein cohomology with coefficients in $\m_\lambda$ which depends on the parity of the coefficients in the highest weight $\lambda = m_1\varpi_1 + m_2 \varpi_2$. As in the previous section, in what follows we denote by $\mathcal{S}_k$ the space of cuspidal forms of weight $k$ for $\mathrm{SL}_2(\mathbb{Z})$.

Let $\Sigma_{k}$ be the canonical basis of normalized eigenfunctions of $\mathcal{S}_k$. Then by using Eichler-Shimura isomorphism one can write
$$
H^1_!(\rS_{\Gamma}^{\mathrm{M}_2}, \tm_{w \cdot \lambda} \otimes \C) = \oplus_{f \in \Sigma_k} H^1_!(\rS_{\Gamma}^{\mathrm{M}_2}, \tm_{w \cdot \lambda} \otimes \C)(f),$$

where the $\C$-vector spaces $H^1_!(\rS_{\Gamma}^{\mathrm{M}_2}, \tm_{w \cdot \lambda} \otimes \C)(f)$ are one dimensional.
We denote $\mathcal{Z}_k = \left\{f \in \Sigma_k \mid L(f, \frac{k}{2}) \neq 0) \right\}$, then one knows:

\begin{thm}[\cite{Harder2012}]
If $m_1$ or $m_2$ is not even, then the restriction morphism in degrees 3 and 4 defines an isomorphism from Eisenstein cohomology to Boundary cohomology, and Eisenstein cohomology is $0$ in degrees 1 and 2. If $m_1 = m_2 = 0$ then the Eisenstein cohomology is one dimensional in degree $0$ and degree $2$ and it is $0$ in the other degrees. Finally, if $m_1 = 0$ and $m_2$ is even, then 
\begin{equation}
H_{Eis}^q(\rS_\Gamma, \tm_\lambda \otimes \C) = \left\{\begin{array}{cccc}  
& \oplus _{f \in \mathcal{Z}_{2m_2+4}} H^1_!(\rS_{\Gamma}^{\mathrm{M}_2}, \tm_{w_2 \cdot \lambda} \otimes \C)(f) \,,  &  q = 2 \\ 
&\\
& \oplus _{f \notin \mathcal{Z}_{2m_2+4}} H^1_!(\rS_{\Gamma}^{\mathrm{M}_2}, \tm_{w_4 \cdot \lambda} \otimes \C)(f)  \,,  &  q = 3 \\ 
&\\
& H^1_!(\rS_{\Gamma}^{\mathrm{M}_1}, \tm_{w_5 \cdot \lambda} \otimes \C) \,,  &  q = 4 \\ 
&\\
& 0 \,, & \mr{otherwise}
\end{array}\qquad\,. \right . \nonumber
\end{equation}
\end{thm}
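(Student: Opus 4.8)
The plan is to identify the Eisenstein cohomology with the image of the restriction morphism and then to pin down that image degree by degree using the analytic theory of Eisenstein series attached to the two maximal parabolics $\rP_1$ and $\rP_2$. The starting point is the defining property of Eisenstein cohomology: the restriction $r^\bullet : H^\bullet(\rS_\Gamma, \tm_\lambda\otimes\C) \to H^\bullet(\partial\rS_\Gamma, \tm_\lambda\otimes\C)$ induces an isomorphism $H_{Eis}^\bullet \xrightarrow{\sim} \mathrm{Im}(r^\bullet)$, so it suffices to compute $\mathrm{Im}(r^\bullet)$. The decisive structural constraint is Poincar\'e--Lefschetz duality on the $5$-dimensional closed manifold $\partial\rS_\Gamma$: since the representations of $\Sp$ are self-dual, the cup product gives a nondegenerate pairing $H^q(\partial\rS_\Gamma, \tm_\lambda) \times H^{5-q}(\partial\rS_\Gamma, \tm_\lambda)\to\C$, and the long exact sequence of the pair $(\overline{\rS}_\Gamma, \partial\rS_\Gamma)$ shows that $\mathrm{Im}(r^\bullet)$ is a maximal isotropic subspace. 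Reading off $H^\bullet(\partial\rS_\Gamma, \tm_\lambda)$ from Case~2 of Subsection~\ref{boundary}, this pairs degree $1$ with $4$ and degree $2$ with $3$, and reduces the theorem to deciding, inside each dual pair, which half of the boundary cohomology is actually hit by restriction.

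Next I would treat the Klingen parabolic $\rP_1$ (Levi $\mathbb{G}_m\times\mr{SL}_2$), responsible for degrees $1$ and $4$. Beginning from a Hecke eigenclass in $H^1_!(\rS_{\Gamma}^{\mathrm{M}_1}, \tm_{w_0\cdot\lambda})$, which by Eichler--Shimura corresponds to a cusp form of weight $m_2+2$, I would form the associated Eisenstein series and compute its constant term along $\rP_1$. There is no $L$-value obstruction in this case, so the series is holomorphic at the cohomological evaluation point and produces a global class whose restriction fills out the top boundary degree $4$; hence $\mathrm{Im}(r^4) = H^4(\partial\rS_\Gamma, \tm_\lambda) = H^1_!(\rS_{\Gamma}^{\mathrm{M}_1}, \tm_{w_5\cdot\lambda})$. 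Orthogonality under the perfect pairing of $H^1$ with $H^4$ then forces $\mathrm{Im}(r^1)=0$, giving $H^1_{Eis}=0$; and $H^0_{Eis}=0$ is automatic since $H^0(\partial\rS_\Gamma, \tm_\lambda)=0$ in this case.

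The heart of the argument is the Siegel parabolic $\rP_2$ (Levi $\mr{GL}_2$), contributing in degrees $2$ and $3$. Here I would use the Hecke decomposition $H^1_!(\rS_{\Gamma}^{\mathrm{M}_2}, \tm_{w_2\cdot\lambda}\otimes\C)=\bigoplus_{f\in\Sigma_{2m_2+4}} H^1_!(\rS_{\Gamma}^{\mathrm{M}_2}, \tm_{w_2\cdot\lambda}\otimes\C)(f)$ into eigenlines and, for each normalized eigenform $f$ of weight $2m_2+4$, form the rank-one Eisenstein series $E(f,s)$ induced from $f$ on the Levi $\mr{GL}_2$. The intertwining operator relating the $w_2$- and $w_4$-constant terms is governed by a ratio of completed Hecke $L$-functions of $f$, whose decisive factor at the cohomological point is the central critical value $L(f, m_2+2)$ (note $(2m_2+4)/2=m_2+2$). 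When $L(f,m_2+2)\neq0$, i.e. $f\in\mathcal{Z}_{2m_2+4}$, the Eisenstein series is regular there and its class restricts nontrivially onto the degree-$2$ component; when $L(f,m_2+2)=0$ the value degenerates and, after passing to the appropriate regularization, the surviving class appears only in degree $3$. Summing over $f$ yields $H^2_{Eis}=\bigoplus_{f\in\mathcal{Z}_{2m_2+4}}H^1_!(\rS_{\Gamma}^{\mathrm{M}_2}, \tm_{w_2\cdot\lambda}\otimes\C)(f)$ and $H^3_{Eis}=\bigoplus_{f\notin\mathcal{Z}_{2m_2+4}}H^1_!(\rS_{\Gamma}^{\mathrm{M}_2}, \tm_{w_4\cdot\lambda}\otimes\C)(f)$, which together with the previous step gives the stated formula.

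I expect the main obstacle to be exactly this last step: establishing that the placement of each Siegel--Eisenstein class in degree $2$ versus degree $3$ is controlled precisely by the nonvanishing of $L(f, m_2+2)$. This demands an explicit evaluation of the constant terms of $E(f,s)$ along both maximal parabolics, together with the functional equation and central behaviour of the Hecke $L$-function, which is the analytic core carried out in~\cite{Harder2012}. By contrast, the degree-$(1,4)$ analysis and the vanishing in degrees $0$ and $1$ are comparatively formal once the duality framework of the first paragraph is in place.
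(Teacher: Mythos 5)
The paper does not actually prove this statement: it is quoted verbatim from Harder \cite{Harder2012}, and Section~\ref{Eisenstein} only records the conclusion and then specializes it case by case using the boundary computation of Section~\ref{nerve}. Your proposal is therefore not comparable to a proof in the paper but to the proof in the cited reference, and as a reconstruction of that argument it is accurate in outline. The formal frame is right: $H^\bullet_{Eis}\cong\mathrm{Im}(r^\bullet)$, self-duality of $\m_\lambda$ plus Poincar\'e--Lefschetz duality on the $5$-dimensional boundary makes $\mathrm{Im}(r^\bullet)$ a maximal isotropic subspace, and in Case~2 this pairs degrees $1\leftrightarrow 4$ and $2\leftrightarrow 3$, so that once $\mathrm{Im}(r^4)$ is shown to be all of $H^4(\partial\rS_\Gamma,\tm_\lambda)$ the vanishing of $H^1_{Eis}$ is forced, and degrees $0,5$ are trivially empty. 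You also correctly locate the two genuinely analytic inputs that the isotropy argument cannot supply: (i) holomorphy of the Klingen Eisenstein series at the evaluation point, which is what actually produces the degree-$4$ classes, and (ii) the eigenform-by-eigenform dichotomy in degrees $2$ versus $3$. On point (ii) note that duality alone only gives $\dim\mathrm{Im}(r^2)+\dim\mathrm{Im}(r^3)=\dim H^2(\partial\rS_\Gamma,\tm_\lambda)$ together with orthogonality; that the splitting respects the Hecke decomposition and is governed precisely by the nonvanishing of $L(f,m_2+2)$ (the center of the functional equation for weight $2m_2+4$, consistent with the paper's definition of $\mathcal{Z}_k$) is exactly the constant-term and intertwining-operator computation in \cite{Harder2012}, which you correctly flag as the core you are deferring. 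So your proposal organizes the result the right way but, like the paper, ultimately rests on Harder's computation for the two decisive steps; it is a faithful outline rather than an independent proof.
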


By using the calculations of the previous section we get the following.

\subsubsection{If $m_1 = m_2 = 0$,}

\begin{equation} 
H_{Eis}^q(\rS_\Gamma, \tm_\lambda \otimes \C) = \left\{\begin{array}{cccc}  
& \C \,,  &  q = 0, 2 \\
&\\
& 0 \,, & \mr{otherwise}
\end{array}\qquad\,, \right . \nonumber
\end{equation}

\subsubsection{If $m_1 = 0$ and $m_2 \not = 0$ is even,} 

\begin{equation} 
H_{Eis}^q(\rS_\Gamma, \tm_\lambda \otimes \C) = \left\{\begin{array}{cccc}  
& \oplus _{f \in \mathcal{Z}_{2m_2+4}} H^1_!(\rS_{\Gamma}^{\mathrm{M}_2}, \tm_{w_2 \cdot \lambda} \otimes \C)(f) \,,  &  q = 2 \\ 
&\\
& \oplus _{f \notin \mathcal{Z}_{2m_2+4}} H^1_!(\rS_{\Gamma}^{\mathrm{M}_2}, \tm_{w_4 \cdot \lambda} \otimes \C)(f)  \,,  &  q = 3 \\ 
&\\
& \mathcal{S}_{m_2+2} \oplus \overline{\mathcal{S}}_{m_2 + 2} \,,  &  q = 4 \\ 
&\\
& 0 \,, & \mr{otherwise}
\end{array}\qquad\,, \right . \nonumber
\end{equation}

\subsubsection{If $m_1 \not = 0$ is even and $m_2 = 0$,}

\begin{equation}
H_{Eis}^q(\rS_\Gamma, \tm_\lambda \otimes \C) = \left\{\begin{array}{cccc}  
& \mathcal{S}_{m_1 + 4} \oplus \mathbb{C} \,,  &  q = 3 \\ 
&\\
& \mathcal{S}_{m_1 + 2} \,,  &  q = 4 \\ 
&\\
& 0 \,, & \mr{otherwise}
\end{array}\qquad\,, \right . \nonumber
\end{equation}

\subsubsection{If $m_1 \not = 0$ is even and $m_2 \not = 0$ is even,}

\begin{equation}
H_{Eis}^q(\rS_\Gamma, \tm_\lambda \otimes \C) = \left\{\begin{array}{cccc}  
& \mathcal{S}_{m_1 + 2m_2 + 4} \,,  &  q = 3 \\ 
&\\
& \mathbb{Q} \oplus \mathcal{S}_{m_2 + 2} \oplus \overline{\mathcal{S}}_{m_2 + 2} \oplus \mathcal{S}_{m_1 + 2} \,,  &  q = 4 \\ 
&\\
& 0 \,, & \mr{otherwise}
\end{array}\qquad\,, \right . \nonumber
\end{equation}

\subsubsection{If $m_1 = 0$ and $m_2$ is odd,}

\begin{equation}
H_{Eis}^q(\rS_\Gamma, \tm_\lambda \otimes \C) = \left\{\begin{array}{cccc}  
& \mathbb{Q} \oplus \mathcal{S}_{m_2 + 3} \oplus \overline{\mathcal{S}}_{m_2 + 3} \oplus \mathcal{S}_{2m_2 + 4} \,,  &  q = 3 \\ 
&\\
& 0 \,, & \mr{otherwise}
\end{array}\qquad\,, \right . \nonumber
\end{equation}

\subsubsection{If $m_1 \not = 0$ is even and $m_2$ is odd,}

\begin{equation}
H_{Eis}^q(\rS_\Gamma, \tm_\lambda \otimes \C) = \left\{\begin{array}{cccc}  
& \mathcal{S}_{m_1 + m_2 + 3} \oplus \overline{\mathcal{S}}_{m_1 + m_2 + 3} \oplus \mathcal{S}_{m_1 + 2m_2 + 4} \,,  &  q = 3 \\ 
&\\
& \mathcal{S}_{m_1 + 2}  \,,  &  q = 4 \\ 
&\\
& 0 \,, & \mr{otherwise}
\end{array}\qquad\,. \right . \nonumber
\end{equation}   

%----------------------------------------------------------------------------------------
%	  Section ---- Orbifold Euler Characteristic
%----------------------------------------------------------------------------------------

\section{Torsion Elements and Orbifold Euler Characteristics}\label{Euler char}

In this section, the orbifold Euler characteristics of the centralizers of torsion elements of $\Sp(\Z)$ are being calculated. Euler characteristic has been a useful tool to address the various problems in group cohomology. For example see~\cite{Horozov2014}. We quickly review the basics about Euler characteristic.  The homological Euler characteristic $\chi_h$ of a group $\Gamma$ with coefficients in a representation $V$ is defined by 
\begin{equation}\label{hec}
\chi_h(\Gamma,V)=\sum_{i=0}^{\infty}\, (-1)^{i} \, \mr{dim} \, H^{i}(\Gamma, V). \nonumber
\end{equation} 

For an arithmetic group $\Gamma_1$, let $\Gamma_1'$ be a torsion free finite index subgroup of $\Gamma_1$ (one knows that every arithmetic group of rank greater than one contains a torsion free finite index subgroup).  Then the orbifold Euler characteristic of $\Gamma_1$ is given by $$\chi_{orb}(\Gamma_1)= [\Gamma_1 : \Gamma_1']^{-1} \chi_h(\Gamma_1').$$

From now on, orbifold Euler characteristic will be simply denoted by $\chi$. Note that, if $\Gamma_1$ is torsion free then $\chi_{h}(\Gamma_1, V) = \chi(\Gamma_1, V).$

The following properties of $\chi$ will be very handy in the forthcoming discussion.
 \begin{itemize}
\item Let $\Gamma_0$, $\Gamma_1$ and  $\Gamma_2$ be groups such that 
$ 1 \longrightarrow \Gamma_1 \longrightarrow \Gamma_0 \longrightarrow \Gamma_2 \longrightarrow 1$ is exact then $\chi(\Gamma_0)= \chi(\Gamma_1) \chi(\Gamma_2)$.
\item If $\Gamma_0$ is finite of order $\left| \Gamma_0 \right|$ then $\chi(\Gamma_0)=\frac{1}{\left| \Gamma_0 \right|}$.
 \end{itemize}

We have introduced the orbifold Euler characteristic in order to use the following formula. If $\Gamma_1$ has torsion elements then we make use of the following result (see~\cite{Horozov2005}).
\begin{equation}\label{eq:hecT}
\chi_h(\Gamma,V)=\sum_{(T)} \chi_{orb}(C(T)) Tr(T^{-1}, V)
\end{equation}
where the sum runs over the set of representatives of conjugacy classes in $\Gamma_1$ of torsion elements $T$ of $\Gamma_1$ and $C(T)$ denotes the centralizer of $T$ in $\Gamma_1$.

In this section, we make use of the following lemma. 

\begin{lema}
$\chi(\mr{Sp}_4(\Z)) = \zeta(-1)\zeta(-3)= -\frac{1}{1440}$ and $\chi(\mr{SL}_2(\Z)) = \zeta(-1)= -\frac{1}{12}$.
\end{lema}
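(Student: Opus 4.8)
The plan is to compute each orbifold Euler characteristic via the Harder--Gauss--Bonnet formula, which expresses $\chi(\Gamma)$ for an arithmetic group $\Gamma \subset G(\Q)$ as a product of special values of the Riemann zeta function dictated by the degrees of the invariants of the associated Weyl group. For a Chevalley group of rank $r$ with exponents $m_1, \dots, m_r$, one has the classical formula
\begin{equation}
\chi(\Gamma) = \prod_{i=1}^{r} \zeta(-m_i), \nonumber
\end{equation}
where the $m_i$ are the exponents of the root system (equivalently, the $d_i - 1$ where $d_i$ are the degrees of the fundamental invariants). For $\mathrm{SL}_2$ the root system is of type $A_1$ with a single exponent $m_1 = 1$, giving $\chi(\mathrm{SL}_2(\Z)) = \zeta(-1)$. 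For $\Sp$ the root system is of type $C_2$, with exponents $1$ and $3$ (the degrees of the invariants being $2$ and $4$), which immediately yields $\chi(\Sp(\Z)) = \zeta(-1)\zeta(-3)$.

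The second step is to insert the known special values of the zeta function. Using the relation to Bernoulli numbers, $\zeta(-1) = -\tfrac{1}{12}$ and $\zeta(-3) = \tfrac{1}{120}$. This gives $\chi(\mathrm{SL}_2(\Z)) = -\tfrac{1}{12}$ directly, and
\begin{equation}
\chi(\Sp(\Z)) = \zeta(-1)\zeta(-3) = \left(-\frac{1}{12}\right)\left(\frac{1}{120}\right) = -\frac{1}{1440}, \nonumber
\end{equation}
as claimed. I would record these two Bernoulli-number evaluations as the only arithmetic input needed.

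As an alternative self-contained route that avoids quoting the general Harder formula, I would use the multiplicativity property of $\chi$ stated earlier in the excerpt together with a known fibration. One can exploit the two maximal parabolic structures: the Levi quotients computed in Subsection~\ref{standard} ($\rM_1 \cong \mathbb{G}_m \times \mathrm{SL}_2$ and $\rM_2 \cong \mathrm{GL}_2$) relate the Euler characteristics of the various arithmetic pieces, and combined with Harder's evaluation for $\mathrm{SL}_2$ and a volume computation (Gauss--Bonnet applied to the $\mathrm{Sp}_4(\R)/\rK_\infty$ symmetric space against the Euler form) one recovers the product of zeta values. In practice, however, the cleanest justification is simply to cite the Harder--Gauss--Bonnet theorem for $\Q$-split Chevalley groups, so I would present that as the primary argument.

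The main obstacle is not the computation but the bookkeeping of normalizations: one must ensure that the \emph{orbifold} Euler characteristic (as normalized in the excerpt, via a torsion-free finite index subgroup) matches the normalization in whichever statement of the Harder formula is cited, including the correct sign conventions and the fact that $\chi$ here is the homological Euler characteristic with trivial coefficients. Once the normalization is pinned down, verifying $\zeta(-1) = -\tfrac{1}{12}$ and $\zeta(-3) = \tfrac{1}{120}$ and multiplying is routine.
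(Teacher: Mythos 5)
Your proposal is correct and follows essentially the same route as the paper, which likewise justifies the lemma by citing Harder's Gauss--Bonnet formula $\chi(\mathrm{Sp}_{2g}(\Z)) = \prod_{k=1}^{g}\zeta(1-2k)$ (for $C_g$ the exponents are $1,3,\dots,2g-1$, so your exponent formulation is the same statement) and then substituting $\zeta(-1)=-\tfrac{1}{12}$ and $\zeta(-3)=\tfrac{1}{120}$. Your arithmetic and normalization remarks are consistent with the paper's treatment.
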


The proof of the above two identities follows from a quiet well known fact that for $\Gamma =\mr{Sp}_{2g}(\Z)$  $$\chi(\Gamma) = \prod_{k=1}^{g} \zeta(1-2k),$$ where $\zeta$ denotes the Riemann's zeta function. The above formula follows from the work of Harder in~\cite{Harder71}. For a quick reference on the appearance of this formula see Theorem 5 on Page 344 of~\cite{SerreBourbaki71} and Example $(iii)$ on Page 158 of~\cite{Serre71}.

Following~\eqr{hecT}, we know that in order to compute $\chi_{h}(\mr{Sp}_{4}(\Z), V)$, we need the list of the conjugacy classes of all  torsion elements. 

We continue by giving the list of representatives of the conjugacy classes of torsion elements of the group $\Sp(\Z)$. For that we need to introduce some notation. 
As before, for each $n \in \mathbb{N}$, $id_n \in \mathrm{GL}_n(\mathbb{Z})$ denotes the identity matrix, 

\begin{equation}
U = \left( \begin{array}{rr}
1 & 0 \\
1 & -1 \end{array}  \right), \qquad
W = \left( \begin{array}{cc}
0 & -1 \\
1 & -1 \end{array}  \right), \qquad
J_2 = \left( \begin{array}{rr}
0 & -1 \\
1 & 0 \end{array}  \right) \nonumber,\\  
\end{equation}
\begin{equation}
R= \left(\begin{array}{rrrr}
0 & 0 & -1 & 0 \\
0 & 0 & 0 & -1 \\
1 & 0 & 0 & 1 \\
0 & 1 & 1 & 0 \end{array}  \right),\quad 
S= \left(\begin{array}{rrrr}
0 & 1 & 0 & 0 \\
0 & 0 & -1 & 0 \\
0 & 0 & -1 & 1 \\
1 & 1 & -1 & 0 \end{array}  \right), \quad
T= \left( \begin{array}{rrrr}
0 & -1 & 1 & 0 \\
-1 & 0 & 1 & 1 \\
-1 & 1 & 0 & 0 \\
0 & -1 & 0 & 0 \end{array}  \right)\,. \nonumber
\end{equation} \\
Let us consider the following operations between $2 \times 2$ matrices in $\mathrm{M}_2(\Z)$:
\begin{equation}
\left( \begin{array}{cc}
a_1 & b_1 \\
c_1 & d_1 \end{array}  \right) \ast
\left(\begin{array}{cc}
a_2 & b_2 \\
c_2 & d_2 \end{array}  \right)=
\left( \begin{array}{cccc}
a_1 & 0 & b_1 & 0 \\
0 & a_2 & 0 & b_2 \\
c_1 & 0 & d_1 & 0 \\
0 & c_2 & 0 & d_2 \end{array}  \right), \nonumber \\
\end{equation} 

\begin{equation}
\left( \begin{array}{cc}
a_1 & b_1 \\
c_1 & d_1 \end{array}  \right) \dotplus
\left( \begin{array}{cc}
a_2 & b_2 \\
c_2 & d_2 \end{array}  \right) =
\left( \begin{array}{cccc}
a_1 & b_1 & 0 & 0 \\
c_1 & d_1 & 0 & 0 \\
0 & 0 & a_2 & b_2 \\
0 & 0 & c_2 & d_2 \end{array}  \right), \nonumber \\
\end{equation}
and
\begin{equation}
\left( \begin{array}{cc}
a_1 & b_1 \\
c_1 & d_1 \end{array}  \right) \circ
\left(\begin{array}{cc}
a_2 & b_2 \\
c_2 & d_2 \end{array}  \right) =
\left( \begin{array}{cccc}
0 & a_1 & 0 & b_1 \\
a_2 & 0 & b_2 & 0 \\
0 & c_1 & 0 & d_1 \\
c_2 & 0 & d_2 & 0 \end{array}  \right)\,.
 \nonumber \\
\end{equation}

Then one has the following theorem (see ~\cite{Yang}).

\begin{thm}[Yang]
A complete list of representatives of the conjugacy classes of torsion elements in $\Sp(\Z)$ is given below in the table, where $\Phi_n$ denotes the $n$-th cyclotomic polynomial.

{ \begin{center}
%\begin{table}
\scriptsize\renewcommand{\arraystretch}{2}
\begin{longtable}{|c|c|c|||c|c|c|c|c|c|}
\hline
Torsion Element  &  Expression   &  Characteristic polynomial & Torsion Element  &  Expression   &  Characteristic polynomial    \\
\hline
\hline
$T_1$ & $id_4$  & $\Phi_{_1}^{^4}$  & $T_2$ & $-id_4$ & $\Phi_{_2}^{^4}$ \\  
\hline
$T_3$ & $id_2 \ast -id_2$  &$\Phi_{_1}^{^2} \Phi_{_2}^{^2}$   & $T_4$ & $U \dotplus U^t$  & $\Phi_{_1}^{^2} \Phi_{_2}^{^2}$\\
\hline
$T_5$ & $W \ast W$ & $\Phi_{_3}^{^2}$   & $T_6$& $W^t \ast W^t$  & $\Phi_{_3}^{^2}$\\
\hline
$T_7$ & $W \ast W^t$ &$\Phi_{_3}^{^2}$  & $T_8$ & $id_2 \ast W$ & $\Phi_{_1}^{^2} \Phi_{_3}$ \\
\hline
$T_9$ & $id_2 \ast W^t$ &$\Phi_{_1}^{^2} \Phi_{_3}$ & $T_{10}$ & $J_2 \ast J_2$ & $\Phi_{_4}^{^2}$\\
\hline
$T_{11}$ & $-(J_2 \ast J_2)$ & $\Phi_{_4}^{^2}$ &$T_{12}$ & $J_2 \ast (-J_2)$  & $\Phi_{_4}^{^2}$\\
\hline
$T_{13}$ & $(-id_2) \circ id_2$ & $\Phi_{_4}^{^2}$ &  $T_{14}$&$id_2 \ast J_2$  & $\Phi_{_1}^{^2} \Phi_{_4}$ \\
\hline
$T_{15}$ & $id_2 \ast (-J_2)$ &$\Phi_{_1}^{^2} \Phi_{_4}$ & $T_{16}$ & $(-id_2) \ast J_2$  & $\Phi_{_2}^{^2} \Phi_{_4}$ \\
\hline
$T_{17}$ & $-(id_2 \ast J_2)$ & $\Phi_{_2}^{^2} \Phi_{_4}$ & $T_{18}$& $S$ & $ \Phi_{_5}$\\
\hline
$T_{19}$ & $S^2$&$\Phi_{_5}$ &$T_{20}$& $S^3$ & $\Phi_{_5}$ \\
\hline
$T_{21}$ &$S^4$& $\Phi_{_5}$ &$T_{22}$ & $-(W \ast W)$  & $\Phi_{_6}^{^2}$  \\
\hline
$T_{23}$ & $-(W^t \ast W^t)$ &  $\Phi_{_6}^{^2}$ & $T_{24}$ & $-(W \ast W^t)$ & $\Phi_{_6}^{^2}$ \\
\hline
$T_{25}$& $id_2 \ast (-W)$  & $\Phi_{_1}^{^2} \Phi_{_6}$  &$T_{26}$ & $id_2 \ast (-W^t)$ & $\Phi_{_1}^{^2} \Phi_{_6}$  \\
\hline
$T_{27}$ & $-(id_2 \ast W)$  &$\Phi_{_2}^{^2} \Phi_{_6}$ &$T_{28}$ & $-(id_2 \ast W^t)$ & $\Phi_{_2}^{^2} \Phi_{_6}$\\
\hline
$T_{29}$ & $(-id_2) \ast W$  & $\Phi_{_2}^{^2} \Phi_{_3}$ &$T_{30}$ & $(-id_2) \ast W^t$ & $\Phi_{_2}^{^2} \Phi_{_3}$ \\
\hline
$T_{31}$ &$W \ast (-W)$ & $\Phi_{_3} \Phi_{_6}$ &$T_{32}$ & $W \ast (-W^t)$  &  $\Phi_{_3} \Phi_{_6}$ \\
\hline
$T_{33}$ & $W^t \ast (-W)$  & $\Phi_{_3} \Phi_{_6}$ &$T_{34}$ & $W^t \ast (-W^t)$  & $\Phi_{_3} \Phi_{_6}$ \\
\hline
$T_{35}$ &$id_2 \circ W$   & $\Phi_{_3} \Phi_{_6}$  &$T_{36}$ & $id_2 \circ W^t$  &  $\Phi_{_3} \Phi_{_6}$\\
\hline
$T_{37}$ &$R$   & $\Phi_{_3} \Phi_{_6}$ &$T_{38}$ & $-R$   & $\Phi_{_3} \Phi_{_6}$ \\
\hline
$T_{39}$ & $id_2 \circ J_2$  &$\Phi_{_8}$  &$T_{40}$ & $id_2 \circ (-J_2)$ &$\Phi_{_8}$ \\
\hline
$T_{41}$ & $T$ &$\Phi_{_8}$&$T_{42}$& $-T$ &  $\Phi_{_8}$ \\
\hline
$T_{43}$ & $-S$ & $\Phi_{_{10}}$&$T_{44}$ & $-S^2$  & $\Phi_{_{10}}$ \\
\hline
$T_{45}$ & $-S^3$ & $\Phi_{_{10}}$ &$T_{46}$ &$-S^4$  & $\Phi_{_{10}}$ \\
\hline
$T_{47}$ &$id_2 \circ (-W)$  &$\Phi_{_{12}}$ &$T_{48}$ & $id_2 \circ (-W^t)$ &$\Phi_{_{12}}$  \\
\hline
$T_{49}$ & $J_2 \ast W$ & $\Phi_{_3} \Phi_{_4}$ &$T_{50}$ & $J_2 \ast W^t$  & $\Phi_{_3} \Phi_{_4}$\\
\hline
$T_{51}$ &$J_2^t \ast W$  &$\Phi_{_3} \Phi_{_4}$ &$T_{52}$ & $J_2^t \ast W^t$ & $\Phi_{_3} \Phi_{_4}$\\
\hline
$T_{53}$ & $J_2 \ast (-W)$ & $\Phi_{_4} \Phi_{_6}$&$T_{54}$ &  $J_2 \ast (-W^t)$& $\Phi_{_4} \Phi_{_6}$ \\
\hline
$T_{55}$ & $J_2^t \ast (-W)$ & $\Phi_{_4} \Phi_{_6}$& $T_{56}$ & $J_2^t \ast (-W^t)$ & $\Phi_{_4} \Phi_{_6}$\\
\hline
\hline
\caption{Torsion Elements}\label{torsionelements}
\end{longtable}
%\end{table}
\end{center}
}
\end{thm}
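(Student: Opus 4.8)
The plan is to classify torsion elements $T \in \Sp(\Z)$ by first determining the rational conjugacy type through the characteristic polynomial and then refining to integral conjugacy classes via the arithmetic of the underlying lattices. Since a torsion element has finite order it is semisimple and its eigenvalues are roots of unity, so its characteristic polynomial is a product $\Phi_{n_1}\cdots \Phi_{n_r}$ of cyclotomic polynomials whose degrees sum to $4$. The symplectic relation $T^t J T = J$ forces the spectrum to be stable under $\lambda \mapsto \lambda^{-1}$, i.e. the rational representation $V = \Q^4$ carrying $T$ is self-dual for the symplectic pairing. Combining the degree bound with this self-duality — noting that $\Phi_n$ is self-reciprocal for $n \geq 3$ while $\Phi_1,\Phi_2$ contribute the self-inverse eigenvalues $\pm 1$ — produces exactly the finite list of admissible characteristic polynomials of Table~\ref{deg4cyclo}. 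I would carry out this enumeration first, since it organizes the entire argument.

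The main work is the passage from rational to integral conjugacy. Over $\Q$ the conjugacy class of a semisimple element is pinned down by its characteristic polynomial, but over $\Z$ several classes may share the same polynomial. The key device is to regard $L = \Z^4$ as a module over the order $\Z[T] \cong \Z[x]/(p(x))$ equipped with the symplectic form, where $p$ is the characteristic polynomial. For each $p$ I would split $V$ into isotypic pieces for the distinct cyclotomic factors; self-duality then pairs each piece either with itself (for the self-reciprocal factors $\Phi_3,\Phi_4,\Phi_6,\Phi_5,\Phi_8,\Phi_{10},\Phi_{12}$ and the $\pm 1$ eigenspaces) or with a complementary piece. Integrally, the classification reduces to enumerating the $\Z[\zeta_n]$-lattice structures on each piece together with the induced Hermitian form, and the possible \emph{gluings} between complementary pieces that need not split as an orthogonal direct sum over $\Z$. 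This is precisely where the extra classes arise: for instance $\Phi_1^2\Phi_2^2$ splits into the orthogonal $T_3 = id_2 \ast (-id_2)$ and the non-split $T_4 = U \dotplus U^t$, while $\Phi_3^2$ yields the three classes $T_5 = W\ast W$, $T_6 = W^t\ast W^t$, $T_7 = W\ast W^t$ coming from the rank-two $\Z[\zeta_3]$-structure and its induced Hermitian form.

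Concretely I would treat the cases grouped by the shape of $p$. A useful simplification is that all cyclotomic fields occurring here, namely $\Q(\zeta_n)$ for $n \in \{3,4,5,6,8,10,12\}$, have class number $1$, so the relevant $\Z[\zeta_n]$-lattice is free and the only freedom lies in the Hermitian form up to equivalence and in the gluing data. The irreducible quartic cases ($\Phi_5,\Phi_8,\Phi_{10},\Phi_{12}$) are cleanest: $\Z[T]$ is the full ring of integers of a degree-$4$ field, $L$ is a rank-one ideal, and the count is governed by how many generators of the cyclic automorphism group give inequivalent forms, producing the four powers $S,S^2,S^3,S^4$ for $\Phi_5$ and the analogous lists otherwise. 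I expect the hardest part to be the repeated-factor cases $\Phi_3^2,\Phi_4^2,\Phi_6^2$ and the mixed cases with several quadratic factors, where one must simultaneously check that the tabulated representatives are pairwise non-conjugate over $\Z$ — using invariants such as the isomorphism type of the primary pieces $\ker \Phi_n(T) \cap L$ as $\Z[\zeta_n]$-lattices, the discriminant of the induced Hermitian form, and the gluing quotient $L \big/ \bigoplus_n (\ker \Phi_n(T) \cap L)$ — and that the list is exhaustive, i.e. no further integral gluing survives. Completing these finitely many module-theoretic computations and matching each surviving class with its matrix built from the blocks $U,W,J_2,R,S,T$ via the operations $\ast,\dotplus,\circ$ establishes the theorem.
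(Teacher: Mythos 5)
The paper does not actually prove this theorem: it is imported wholesale from Yang's classification \cite{Yang}, so there is no internal argument to measure you against. Your strategy --- reduce to the $\Z[T]$-module structure on $L=\Z^4$ with its symplectic form, split into cyclotomic isotypic pieces, classify the induced Hermitian $\Z[\zeta_n]$-lattices using that the relevant cyclotomic fields have class number one, and then classify the integral gluings between pieces --- is the standard route for such problems and is, in outline, the right one (and almost certainly close in spirit to Yang's actual proof).

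Two points, one small and one substantive. First, your enumeration criterion for the admissible characteristic polynomials is not quite sufficient as stated: inversion-stability of the spectrum in degree $4$ still admits $\Phi_1\Phi_2\Phi_3$, $\Phi_1\Phi_2\Phi_4$, $\Phi_1\Phi_2\Phi_6$, $\Phi_1^3\Phi_2$ and $\Phi_1\Phi_2^3$, none of which appear in Table~\ref{deg4cyclo}. You must additionally use $\det T=1$ (equivalently, that the $(\pm1)$-eigenspaces are nondegenerate symplectic subspaces and hence even-dimensional) to cut the list down to the $19$ polynomials, and you must separately verify realizability of each --- which the explicit representatives do. Second, and more seriously, the substance of the theorem is the exact count and the explicit matrices: that $\Phi_4^2$ contributes precisely the four classes $T_{10},\dots,T_{13}$, that $\Phi_3\Phi_6$ contributes precisely eight, that $\Phi_5$ gives exactly the four powers of $S$, that $\Phi_1^2\Phi_2^2$ gives exactly the split and non-split gluings $T_3$ and $T_4$, and that the $56$ listed elements are pairwise non-conjugate and exhaustive. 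You correctly name the invariants that would decide all of this (the primary lattices $\ker\Phi_n(T)\cap L$ as Hermitian $\Z[\zeta_n]$-lattices, their discriminants, the gluing quotient), but every one of the nineteen case computations is deferred with ``I would'' and ``I expect.'' For a classification statement the case analysis \emph{is} the proof; as written this is a correct and well-organized plan rather than a proof, and one must either carry out those finitely many computations in full or, as the paper does, simply cite Yang.
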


To study the centralizer of each element $T_i$ and eventually their Euler characterisics, we considered the equations coming from the symplectic identity $g^{t} J g =J$, and the relation $g T_i=T_i g$ for $g \in  C(T_i) \subseteq \Sp(\Z)$. 
We now give the details case by case. Let us fix an element $$g = \left( \begin {array}{cccc} a_1 & a_2 & b_1 & b_2\\ 
a_3 & a_4 & b_3 &b_4\\ 
c_1 & c_2 & d_1& d_2\\ 
c_3 & c_4 & d_3 & d_4\end {array} \right).$$

\subsection{Centralizer and Euler characteristic of  $T_1$ and $T_2$}  Here $T_1 = id_4$ and $T_2 =-id_4$. Clearly, $C(T_1) =C (T_2) =\mr{Sp}_4(\Z)$. Therefore  $$\chi(C(T_1)) = \chi(C(T_2)) = -\frac{1}{1440}.$$

\subsection{Centralizer and Euler characteristic of  $T_4$} \label{T4} From the definition $$ T_4 = U \dotplus U^{t} = \left( \begin {array}{cccc} 1&0&0&0\\ 1&-1&0&0\\ 0&0&1&1\\ 0&0&0&-1\end {array} \right) \,.$$  For any  $g \in C(T_4)$, solving $g T_4 =T_4 g$ we  obtain that $g$ must be of the form
\begin{equation}
g = \left( \begin{array}{cccc}
2a_3 + a_4 & 0 & 2b_3 & b_3 \\
a_3 & a_4 & b_3 & b_4 \\
c_1 & c_3 & 2d_2 + d_4 & d_2 \\
c_3 & -2c_3 & 0 & d_4 \end{array}  \right). \nonumber \\
\end{equation}
Solving $g^t J g =J$, we immediately get
$$ det \tmt{2a_3 + a_4}{b_3}{2c_1 + c_3}{2d_2 + d_4} =1, \quad det \tmt{a_4}{(b_3-2b_4)}{c_3}{d_4} =1\,.$$  This establishes a map between $C(T_4)$ and $\mathrm{SL}_2(\Z) \times \mathrm{SL}_2(\Z)$ given by
\begin{equation}
g:=g(a_3, a_4, b_3, b_4, c_1, c_3, d_2, d_4) \mapsto \left( \left( \begin{array}{cc}
2a_3 + a_4 & b_3 \\
2c_1 + c_3 & 2d_2 + d_4 \end{array}  \right) ,
\left( \begin{array}{cc}
a_4 & b_3-2b_4 \\
c_3 & d_4 \end{array}  \right) \right)\,,
 \nonumber \\
\end{equation}
which is an isomorphism between $C(T_4)$ and the index $6$ subgroup $$H=\left\{(A, B) \in \mathrm{SL}_2(\Z) \times \mathrm{SL}_2(\Z) \mid A \equiv B \, (\mr{mod}\, 2) \right\}$$ of $\mathrm{SL}_2(\Z) \times \mathrm{SL}_2(\Z)$. Therefore $\chi(C(T_4))= 6 \chi(\mathrm{SL}_2(\Z))^2 = \frac{1}{24}$.\\

\subsection{Centralizer and Euler characteristic of  $T_5$, $T_6$, $T_{22}$ and $T_{23}$} \label{Centralizer 5} From the definition, it is clear that $C(T_5) = C(T_{22})$ and $C(T_6) = C(T_{23})$.  Let us consider $$T_5 =W\ast W = \left( \begin {array}{cccc} 0&0&-1&0\\ 0&0&0&-1\\ 1&0&-1&0\\ 0&1&0&-1\end {array} \right)\,.$$

Solving $T_5 \, g = g \, T_5$ gives $ g =\tmt{x}{y}{-y}{x+y} $, where $x=\tmt{a_1}{a_2}{a_3}{a_4}$ and $y=\tmt{b_1}{b_2}{b_3}{b_4}$ and using this with $g^{t} J g=J$ gives the following relations
\begin{eqnarray*}
& a_1^2 + a_1 b_1 + b_1^2 + a_3^2 +a_3 b_3 +b_3^2 =1 \,, \quad & a_2^2 + a_2 b_2 + b_2^2 + a_4^2 +a_4 b_4 +b_4^2 =1 \,,\\
& a_1 a_2 + b_1 b_2 + a_3 a_4 + b_3 b_4 + a_1 b_2 + a_3 b_4=0\,, \quad & a_1 b_2 - a_2 b_1 -a_3 b_4 + a_4 b_3 =0 \,.
\end{eqnarray*} 
Consider $\omega  = \frac{1}{2} + i \frac{\sqrt{3}}{2}= e^{\frac{2\pi i}{3}}$. Following the above relations we can show that $x - {\omega} y \in \mathrm{U}_2(\Z[{\omega}])=\left\{ u=A+ {\omega} B | A, B \in \mr{M}_2(\R),  u^{*} u = id_2 \right\}$ (where $\mr{M}_2(\R)$ denotes the space of $2$ by $2$ matrices with coefficients in $\R$). This establishes an isomorphism between $C(T_5)$ and $\mathrm{U}_2(\Z[{\omega}])$. One can also see from those relations, that one of the pairs $(a_1, b_1), (a_3, b_3)$ must be zero and the other must be
$(1, 0), (0, 1), (1, -1), (-1, 0), (0, -1)$ or $(-1, 1)$. The same is true for the pairs $(a_2, b_2), (a_4, b_4)$. Even more, $(a_1, b_1)$ is nonzero if and only if the pair $(a_4, b_4)$ is nonzero. 
$C(T_5)$ has therefore order $72$ and $\chi(C(T_5)) = \chi(C(T_{22}))=\frac{1}{72}$.

Similarly, as $T_6 = T_5^{t}$,  $C(T_6)$ is isomorphic to $C(T_5)$ and $\chi(C(T_6))=\chi(C(T_{23}))=\frac{1}{72}.$

\subsection{Centralizer and Euler characteristic of  $T_7$ and $T_{24}$}  $T_{24} =- T_7$, therefore $C(T_7) = C(T_{24})$. From the definition $$T_7 = \left( \begin {array}{cccc} 0&0&-1&0\\ 0&0&0&1\\ 1&0&-1&0\\ 0&-1&0&-1\end {array} \right).$$ By using the fact that $T_7 g = g T_7$, we get $$ g=\left( \begin {array}{cccc} a_1 &  a_2 & b_1 & b_2 \\ a_3 & a_4 & b_3 & b_4\\ - b_1& b_2& a_1 +  b_1& b_2- a_2 \\ b_3&-b_4&-a_3-b_3&a_4- b_4\end {array} \right)\,.$$ Using the equation $g^{t} J g =J$, gives us the following conditions 
\begin{eqnarray*}
& (a_1^2 + a_1 b_1 + b_1^2)-(a_3^{2} + a_3 b_3 + b_3^2)=1 \\
& (a_4^2 - a_4 b_4 + b_4^2)-(a_2^{2} - a_2 b_2 + b_2^2)=1 \\
& a_1 b_2 + a_2b_1 -a_3b_4 -a_4b_3 = 0 \\
& a_1a_2 - b_1b_2 - a_3a_4 + b_3b_4 -a_1b_2 +a_3b_4 = 0
\end{eqnarray*}

and these equations show that $$\left( \begin {array}{cc} a_1 - b_1 \omega &  a_2 + b_2 \omega \\ a_3 - b_3 \omega & a_4 + b_4 \omega \end {array} \right) \in \mathrm{U}_{(1, 1)} (\mathbb{Z}[\omega]) =\left\{ u=A+ {\omega} B | A, B \in \mr{M}_2(\R),  u^{*}I_{1, -1} u = I_{1, -1} \right\}$$

where $I_{1, -1} = \left( \begin {array}{cc} 1 &  0  \\ 0  & - 1 \end {array} \right)$ and $\omega = e^{\frac{2\pi i}{3}}$. Even more, one can see that this establishes an isomorphism between $C(T_7)$ and $\mathrm{U}_{(1, 1)}(\mathbb{Z}[\omega])$.

\begin{prop}
$\mathrm{SU}_{(1, 1)} (\mathbb{Z}[\omega])$ is isomorphic to a subgroup of $\mathrm{SL}_2(\mathbb{Z})$ of index $4$ and is a subgroup of index $6$ of $\mathrm{U}_{(1, 1)}(\mathbb{Z}[\omega])$. In particular $\chi(C(T_7)) = \frac{4}{6} \chi(\mathrm{SL}_2(\mathbb{Z})) = -\frac{1}{18}$.
\end{prop}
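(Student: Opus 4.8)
The plan is to deduce $\chi(C(T_7))$ from the isomorphism $C(T_7)\cong \mathrm{U}_{(1,1)}(\Z[\omega])$ obtained above, together with the multiplicativity of $\chi$ and the value $\chi(\mathrm{SL}_2(\Z))=-\tfrac{1}{12}$ recorded in the lemma. The whole computation rests on the two structural assertions of the proposition: that $\mathrm{SU}_{(1,1)}(\Z[\omega])$ has index $6$ in $\mathrm{U}_{(1,1)}(\Z[\omega])$, and that it is isomorphic to a subgroup of index $4$ in $\mathrm{SL}_2(\Z)$. I will treat these in turn and then assemble the numerical answer.

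The index $6$ assertion I would prove by means of the determinant map. For $u\in\mathrm{U}_{(1,1)}(\Z[\omega])$ the relation $u^\ast I_{1,-1}u=I_{1,-1}$ gives, after taking determinants, $\overline{\det u}\,\det u=1$, so $\det u$ is a unit of $\Z[\omega]$ of norm $1$. Since the unit group of $\Z[\omega]$ consists exactly of the six roots of unity $\{\pm1,\pm\omega,\pm\omega^2\}$, each of norm $1$, we obtain a homomorphism $\det\colon\mathrm{U}_{(1,1)}(\Z[\omega])\to\mu_6$ whose kernel is $\mathrm{SU}_{(1,1)}(\Z[\omega])$ by definition. It is surjective, since the diagonal matrix $\left(\begin{smallmatrix}\zeta&0\\0&1\end{smallmatrix}\right)$ lies in $\mathrm{U}_{(1,1)}(\Z[\omega])$ and has determinant $\zeta$ for every $\zeta\in\mu_6$. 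Hence $1\to\mathrm{SU}_{(1,1)}(\Z[\omega])\to\mathrm{U}_{(1,1)}(\Z[\omega])\to\mu_6\to1$ is exact, which yields the index $6$ and, by multiplicativity of $\chi$ applied to this sequence, $\chi(C(T_7))=\tfrac16\,\chi(\mathrm{SU}_{(1,1)}(\Z[\omega]))$.

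For the index $4$ assertion I would start from the explicit description $\mathrm{SU}_{(1,1)}(\Z[\omega])=\{\left(\begin{smallmatrix}a&b\\\bar b&\bar a\end{smallmatrix}\right): a,b\in\Z[\omega],\ N(a)-N(b)=1\}$ and exploit that $I_{1,-1}$ is an isotropic Hermitian form, so that $\mathrm{SU}_{(1,1)}$ is a $\Q$-form of $\mathrm{SL}_2$. Concretely, the entrywise conjugation on such $u$ equals $u\mapsto KuK$ with $K=\left(\begin{smallmatrix}0&1\\1&0\end{smallmatrix}\right)$, so conjugating by $P=\left(\begin{smallmatrix}1&1\\1&-1\end{smallmatrix}\right)$ turns it into ordinary complex conjugation and produces a matrix with rational diagonal and off-diagonal entries in $\Q\theta$, where $\theta=\sqrt{-3}=1+2\omega$. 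A further conjugation by $\left(\begin{smallmatrix}1&0\\0&\theta\end{smallmatrix}\right)$ clears $\theta$ and sends $u$ to $w=\left(\begin{smallmatrix}p&q\\-3r&s\end{smallmatrix}\right)\in\mathrm{SL}_2(\Q)$, whose entries are explicit linear combinations of the integer coordinates of $a$ and $b$. The image of $\mathrm{SU}_{(1,1)}(\Z[\omega])$ is thus a concrete arithmetic lattice carrying a visible level-$3$ structure (the prime $3$ being ramified in $\Q(\sqrt{-3})$), and identifying it up to $\mathrm{GL}_2(\Q)$-conjugacy with a congruence subgroup of $\mathrm{SL}_2(\Z)$ of index $4$, consistent with $[\mathrm{SL}_2(\Z):\Gamma_0(3)]=4$, completes this step. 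I expect the main obstacle to be exactly this bookkeeping: the half-integers that appear through the Cayley transform must be organized into the correct integrality and congruence conditions, and one must verify that the resulting index is exactly $4$ and not merely that the two groups are commensurable.

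Finally, combining the two assertions gives $\chi(\mathrm{SU}_{(1,1)}(\Z[\omega]))=4\,\chi(\mathrm{SL}_2(\Z))$ by multiplicativity for the index-$4$ inclusion, whence $\chi(C(T_7))=\tfrac16\,\chi(\mathrm{SU}_{(1,1)}(\Z[\omega]))=\tfrac{4}{6}\,\chi(\mathrm{SL}_2(\Z))=-\tfrac{1}{18}$, as claimed.
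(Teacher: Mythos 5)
Your determinant argument for the index $6$ is complete and correct: $\det$ maps $\mathrm{U}_{(1,1)}(\Z[\omega])$ onto the six units of $\Z[\omega]$ (all of norm $1$, with surjectivity witnessed by $\mathrm{diag}(\zeta,1)$), and its kernel is $\mathrm{SU}_{(1,1)}(\Z[\omega])$. This actually supplies a detail the paper leaves implicit, and the final assembly via multiplicativity of $\chi$ is exactly right.

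The gap is in the index $4$ step, and it is the essential one: without knowing that $\mathrm{SU}_{(1,1)}(\Z[\omega])$ sits inside $\mathrm{SL}_2(\Z)$ with index exactly $4$, no numerical value of $\chi$ comes out. What you give is a strategy (Cayley transform diagonalizing the conjugation action, then clearing $\theta=\sqrt{-3}$), ending with the statement that one must ``identify'' the image with an index-$4$ congruence subgroup and that you ``expect the main obstacle to be exactly this bookkeeping.'' That bookkeeping is the proof; the appeal to $[\mathrm{SL}_2(\Z):\Gamma_0(3)]=4$ is only a plausibility check, since nothing forces the image to be $\Gamma_0(3)$ (in the paper it is a different mod-$3$ congruence subgroup, which merely happens to also have index $4$). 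The paper closes this by a single explicit conjugation: writing $g=\left({a+b\omega\atop c+d\bar\omega}{c+d\omega\atop a+b\bar\omega}\right)$ and setting $D=\frac{1}{\omega^2-1}\left({-1\atop -\omega}{\omega\atop 1}\right)$, one computes $DgD^{-1}=\left({a-d\atop d-b-c}{b-c\atop a+d-b}\right)\in\mathrm{SL}_2(\Z)$, and the image is precisely $\left\{\left({m_1\atop m_3}{m_2\atop m_4}\right)\in\mathrm{SL}_2(\Z) \mid (m_4-m_1)-2(m_3-m_2)\equiv 2(m_4-m_1)-(m_3-m_2)\equiv 0 \bmod 3\right\}$, which one checks has index $4$. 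Until you carry out the analogous explicit computation for your transform (exhibit the image lattice, verify it is an actual subgroup of $\mathrm{SL}_2(\Z)$ after your normalization, and count its index mod $3$), the proposition is not proved.
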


\begin{proof}
We take $D = \frac{1}{\omega^2 - 1} \left( \begin {array}{cc} -1 &  \omega  \\ -\omega  &  1  \end {array} \right)$. Let $g$ be an element of $\mathrm{SU}_{(1, 1)} (\mathbb{Z}[\omega])$. Then $\exists$ $a, b, c, d \in \mathbb{Z}$ such that $$g = \left( \begin {array}{cc} a + b\omega &  c+d \omega  \\ c + d \bar{\omega}  &  a + b \bar{\omega}  \end {array} \right).$$
Hence $D g D^{-1} = \left( \begin {array}{cc} a - d  &  b - c  \\ d - b - c  &  a + d - b  \end {array} \right)$ and one can see that conjugation by $D$ gives an isomorphism between $\mathrm{SU}_{(1, 1)}(\mathbb{Z}[\omega])$ and
$$\left\{\tmt{m_1}{m_2}{m_3}{m_4} \in \mathrm{SL}_2(\mathbb{Z}) \mid (m_4 - m_1) - 2(m_3-m_2) \equiv 2(m_4 - m_1) - (m_3-m_2) \equiv 0 \mbox{ mod 3} \right\}$$
which is a subgroup of index $4$ in $\mathrm{SL}_2(\mathbb{Z})$.

\end{proof}

\subsection{Centralizer and Euler characteristic of  $T_{10}$ and $T_{11}$} Note that $C(T_{10}) = C(T_{11})$. For any $g \in C(T_{11})$, solving the equations $g^{t} J g=J$ and $g T_{11} = T_{11} g$ gives us that   $g=\left( \begin {array}{cccc} 
x & y\\
-y & x \end {array} \right) $  where $x  =\left( \begin {array}{cccc} 
a_1& a_2\\ 
a_3& a_4 \end {array} \right)$ and $y = \left( \begin {array}{cccc} 
b_1& b_2\\ 
b_3& b_4 \end {array} \right) $, and 
\begin{eqnarray*}
& a_1 b_2 - a_2 b_1 + a_3 b_4 - a_4 b_3  =  0\,, & a_1 a_2 + a_3 a_4 + b_1 b_2 + b_3 b_4 = 0 \,, \\ \nonumber
 %a_1 a_2 + a_3 a_4 + b_1 b_2 + b_3 b_4 &= &0 \,, \\ \nonumber
 & a_{1}^{2}  + a_{3}^{2} + b_{1}^{2} + b_{3}^{2} = 1 \,, &  a_{2}^{2}  + a_{4}^{2} + b_{2}^{2} + b_{4}^{2}  =  1 \,.\\ \nonumber
 %a_{2}^{2}  + a_{4}^{2} + b_{2}^{2} + b_{4}^{2} & = & 1 \,. \nonumber
\end{eqnarray*}
These equations are equivalent to the fact that $u = x+i y$ is an element of $\mathrm{U}_{2}(\Z[i])$. This defines an isomorphism between $C(T_{11})$ and $\mathrm{U}_{2}(\Z[i])$. By a similar argument as the one used in Subsection \ref{Centralizer 5} one can see that $C(T_{11})$ has $32$ elements and  therefore $$\chi(C(T_{10})) = \chi(C(T_{11})) = \frac{1}{32}.$$

\subsection{Centralizer and Euler characteristic of  $T_{12}$} By definition $$T_{12}= J_2 \ast (-J_2) = \left( \begin {array}{cccc} 0&0&-1&0\\ 0&0&0&1
\\ 1&0&0&0\\ 0&-1&0&0\end {array}\right)\,.$$  Let $g \in C(T_{12})$. As $ T_{12} \, g = g T_{12}$ we get $$g = \left(\begin {array}{cccc}  
a_1 &a_2 & b_1& b_2 \\ 
a_3 &a_4 &b_3 &b_4  \\ 
-b_1& b_2&a_1 &- a_2\\ 
b_3 &-b_4&-a_3& a_4\end {array}\right) $$

and by the fact that $g^{t} J g =J$ we obtain the following relations 
\begin{eqnarray*}
& a_1 b_2 +a_2 b_1 -a_3 b_4 - a_4 b_3 =0 \,, \quad & a_1 a_2 - a_3 a_4 -b_1 b_2 + b_3 b_4 =0\,,\\
& a_1^2 + b_ 1^2 - a_3^2 - b_3^2 = 1\,, \quad & a_4^2 + b_ 4^2 - a_2^2 - b_2^2 = 1\,.
\end{eqnarray*}
This can also be written in terms of dot product by 
\begin{align} %\label{property 12}
& (a_1, b_1).(b_2, a_2) = (a_3, b_3).(b_4, a_4)  \,, \quad & (a_1, b_1).(a_2, -b_2) = (a_3, b_3).(a_4, -b_4),\nonumber\\
& \left|\left|(a_1, b_1)\right|\right|^2 - \left|\left|(a_3, b_3)\right|\right|^2 = 1\,, \quad & \left|\left|(a_4, b_4)\right|\right|^2 - \left|\left|(a_2, b_2)\right|\right|^2 = 1\,, \nonumber
\end{align} 

which shows that $\left( \begin {array}{cc} a_1 - b_1i & a_2 + b_2 i \\ a_3 - b_3 i& a_4 +b_4i
\end {array}\right) \in \mathrm{U}_{(1,1)}(\mathbb{Z}[i])$. In fact this describes an isomorphism between $C(T_{12})$ and $\mathrm{U}_{(1,1)}(\mathbb{Z}[i])$).
 
\begin{prop}
$\mathrm{SU}_{(1,1)}(\mathbb{Z}[i])$ is isomorphic to an index $3$ subgroup of $\mathrm{SL}_2(\mathbb{Z})$ and is a subgroup of index $4$ in $\mathrm{U}_{(1, 1)}(\mathbb{Z}[i])$. In particular, $\chi(C(T_{12})) = \chi(\mathrm{U}_{(1, 1)}(\mathbb{Z}[i])) = -\frac{1}{16}$.
\end{prop}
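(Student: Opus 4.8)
The plan is to imitate, almost verbatim, the strategy carried out for the $\omega$-case (the proposition attached to $T_7$), replacing $\mathbb{Z}[\omega]$ by the Gaussian integers $\mathbb{Z}[i]$. The ramified rational prime is now $2$ rather than $3$, so every congruence condition that appears will be read modulo $2$. First I would record the explicit shape of the special unitary group: writing $\alpha = a + bi$ and $\beta = c + di$ with $a,b,c,d \in \mathbb{Z}$, the determinant-one restriction of the relations already derived for $T_{12}$ shows that
\[
\mathrm{SU}_{(1,1)}(\mathbb{Z}[i]) = \left\{ \begin{pmatrix} \alpha & \beta \\ \overline{\beta} & \overline{\alpha} \end{pmatrix} : a^2 + b^2 - c^2 - d^2 = 1 \right\},
\]
so that $\det g = |\alpha|^2 - |\beta|^2 = 1$ is precisely the defining norm relation.

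Next I would introduce the Cayley-type matrix $D = \frac{1}{i^2 - 1}\begin{pmatrix} -1 & i \\ -i & 1 \end{pmatrix}$, whose inverse is $D^{-1} = \begin{pmatrix} 1 & -i \\ i & -1 \end{pmatrix}$, and compute the conjugate directly. A short calculation gives the integral matrix
\[
D \begin{pmatrix} a+bi & c+di \\ c-di & a-bi \end{pmatrix} D^{-1} = \begin{pmatrix} a-d & b-c \\ -b-c & a+d \end{pmatrix},
\]
whose determinant is exactly $a^2 + b^2 - c^2 - d^2 = 1$. Since conjugation by $D$ is an injective group homomorphism, this exhibits $\mathrm{SU}_{(1,1)}(\mathbb{Z}[i])$ isomorphically onto its image inside $\mathrm{SL}_2(\mathbb{Z})$.

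I would then identify this image and compute its index. Solving $a = \tfrac{m_1+m_4}{2}$, $d = \tfrac{m_4-m_1}{2}$, $b = \tfrac{m_2-m_3}{2}$, $c = -\tfrac{m_2+m_3}{2}$, one sees that a matrix $\begin{pmatrix} m_1 & m_2 \\ m_3 & m_4 \end{pmatrix} \in \mathrm{SL}_2(\mathbb{Z})$ lies in the image if and only if $m_1 \equiv m_4$ and $m_2 \equiv m_3 \pmod 2$; conversely each such matrix is attained, the resulting integers automatically satisfying the norm condition because the determinant of the conjugate equals $a^2+b^2-c^2-d^2$. Reducing modulo $2$, these are exactly the matrices whose image in $\mathrm{SL}_2(\mathbb{Z}/2) \cong S_3$ lies in the order-two subgroup $\bigl\{ I, \left(\begin{smallmatrix} 0 & 1 \\ 1 & 0 \end{smallmatrix}\right) \bigr\}$, which has index $3$. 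Hence $\mathrm{SU}_{(1,1)}(\mathbb{Z}[i])$ is isomorphic to an index-$3$ subgroup of $\mathrm{SL}_2(\mathbb{Z})$, and multiplicativity of $\chi$ under finite index yields $\chi(\mathrm{SU}_{(1,1)}(\mathbb{Z}[i])) = 3\,\chi(\mathrm{SL}_2(\mathbb{Z})) = -\tfrac14$.

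Finally, for the index inside the full unitary group I would use the determinant. Taking determinants in $g^* I_{1,-1} g = I_{1,-1}$ gives $|\det g|^2 = 1$, so $\det g$ is a Gaussian unit, and $\mathrm{diag}(i,1) \in \mathrm{U}_{(1,1)}(\mathbb{Z}[i])$ shows that $\det \colon \mathrm{U}_{(1,1)}(\mathbb{Z}[i]) \to \{\pm 1, \pm i\}$ is surjective with kernel $\mathrm{SU}_{(1,1)}(\mathbb{Z}[i])$. Thus the index is $4$, and the short exact sequence together with $\chi$ of a finite group gives $\chi(C(T_{12})) = \chi(\mathrm{U}_{(1,1)}(\mathbb{Z}[i])) = \tfrac14 \chi(\mathrm{SU}_{(1,1)}(\mathbb{Z}[i])) = -\tfrac{1}{16}$. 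The only genuinely delicate point is the conjugation computation together with the modulo-$2$ bookkeeping pinning the image down exactly and confirming that its index is $3$; once the $\omega$-case template is in place, the rest is formal.
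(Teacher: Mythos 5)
Your proposal is correct and follows essentially the same route as the paper: an explicit isomorphism from $\mathrm{SU}_{(1,1)}(\mathbb{Z}[i])$ onto the index-$3$ congruence subgroup $\{m_1\equiv m_4,\ m_2\equiv m_3 \pmod 2\}$ of $\mathrm{SL}_2(\mathbb{Z})$, followed by the determinant argument identifying $\mathrm{SU}$ as the index-$4$ kernel inside $\mathrm{U}_{(1,1)}(\mathbb{Z}[i])$. The only cosmetic difference is that you realize the isomorphism as conjugation by the Cayley matrix $D$ (mirroring the $T_7$ case), while the paper writes the equivalent map $f$ down directly; both land on the same subgroup and yield $\chi(C(T_{12}))=-\tfrac{1}{16}$.
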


\begin{proof}
Consider the function $f:\mathrm{SU}_{(1,1)}(\mathbb{Z}[i]) \rightarrow \mathrm{SL}_2(\mathbb{Z})$ defined as follows: If $g \in \mathrm{SU}_{(1, 1)}(\mathbb{Z}[i])$ then there exist $a, b, c, d \in \mathbb{Z}$ such that 
$g = \left(\begin {array}{cc}  a + b i & c + d i \\ c -  d i & a - b i \end {array}\right)$ and we define
\[
f\left(\begin {array}{cc}  
a + b i & c + d i \\ 
c - d i & a - b i \end {array}\right) =  \left(\begin {array}{cc}  
a + c & d - b \\ 
d + b & a - c\end {array}\right).
\]
The fact that $f(g) \in \mathrm{SL}_2(\mathbb{Z})$ follows from the fact that $a^2 + b^2 - c^2 - d^2 = 1$, and one can see that $f$ is, even more, a morphism of groups. It is also clear that $f$ is injective and therefore an isomorphism into its image, which is the subgrpup of $\mathrm{SL}_2(\mathbb{Z})$ of all the matrices that can be written as
\[
\left(\begin {array}{cc}  
a + c & d - b \\ 
d + b & a - c \end {array}\right)
\]
with $a, b, c, d \in \mathbb{Z}$. This is exactly the subgroup of all the matrices $\left(\begin {array}{cc}  m_1 & m_2 \\ m_3 & m_4 \end {array}\right) \in \mathrm{SL}_2(\mathbb{Z})$ satisfying $m_1 \equiv m_4 \quad mod (2)$ and $m_2 \equiv m_3 \quad mod (2)$, which is a subgroup of $\mathrm{SL}_2(\mathbb{Z})$ of index $3$.
\end{proof}

\subsection{Centralizer and Euler characteristic of  $T_{13}$} By definition $$T_{13} = \left( \begin {array}{cccc} 0&-1&0&0\\ 1&0&0&0
\\ 0&0&0&-1\\ 0&0&1&0\end {array} \right).$$ For any $g\in C(T_{13})$ after solving $g^t J g =J$ and $g T_{13} =T_{13 } g$, we obtain that $g$ can be written as $$g=\left( \begin {array}{cccc} 
a_1& a_2&b_1& b_2\\ 
- a_2& a_1&-b_2& b_1\\ 
c_1&c_2& d_1&d_2\\ 
-c_2 & c_1 & - d_2& d_1\end {array} \right) $$ where the coefficients satisfy the following equations
\begin{align}
 &a_1 c_2 = a_2 c_1,\quad b_1 d_2 = b_2 d_1, \label{Property 13} \\ 
 &a_1 d_2 -a_2 d_1 + b_1 c_2 - b_2 c_1 = 0 , \label{Property 13, ii} \\ 
 &a_1 d_1 + a_2 d_2 - b_1 c_1 - b_2 c_2 =1. \label{Property 13, iii}
\end{align}

We consider the following elements $z_a = a_1 + ia_2, z_b = b_1 + ib_2, z_c = c_1 + ic_2, z_d = d_1 + id_2 \in \mathbb{Z}[i]$. One can identify $C(T_{13})$ with a subgroup of $\mathrm{GL}_2(\mathbb{Z}[i])$ by the map $f$ that sends $g$ to the matrix 
$$f(g) = \left( \begin {array}{cc} 
z_a & z_b \\ 
z_c & z_d \end {array} \right) .$$

Equations (\ref{Property 13}), (\ref{Property 13, ii}) and (\ref{Property 13, iii}) could be written as
\begin{equation} \label{Property 13 iv}
z_a \bar{z}_c, z_b \bar{z}_d \in \mathbb{Z} \mbox{ and } z_a\bar{z}_d - \bar{z}_bz_c = 1. \nonumber
\end{equation}  

In particular, if $x=z_a\bar{z}_d, y=\bar{z}_bz_c$ then $x, y \in \mathbb{Z}[i]$ satisfy $x = y + 1$ and $x \bar{y} = z_a \bar{z}_c z_b \bar{z}_d \in \mathbb{Z}$. Hence $(y+1)\bar{y} \in \mathbb{Z}$ and this implies immediately that $y \in \mathbb{Z}$ and therefore $x \in \mathbb{Z}$.

We have proved
\begin{equation} 
z_a \bar{z}_c, z_b \bar{z}_d, z_a\bar{z}_d, \bar{z}_bz_c \in \mathbb{Z} \nonumber
\end{equation} 
or equivalently
\begin{equation} \label{Property 13 v}
a_1c_2 = a_2c_1, b_1d_2 = b_2d_1, a_1d_2 = a_2d_1, b_1c_2 =b_2c_1.\nonumber
\end{equation} 

Now it follows a study case by case.

\begin{itemize}
\item[] If $z_a = 0$, then $z_a\bar{z}_d - \bar{z}_bz_c = 1$ implies $- \bar{z}_bz_c = 1$. Therefore $z_b \in \left\{1, -1, i, -i\right\}$. If $z_b = 1$ or $-1$, then $z_c = -z_b$ and $z_d \in \mathbb{Z}$ (because $z_b \bar{z}_d \in \mathbb{Z}$). If $z_b = i$ or $-i$, then $z_c = z_b$ and as $z_b \bar{z}_d \in \mathbb{Z}$, $z_d \in i \mathbb{Z}$. In both cases, by using the fact that $z_a\bar{z}_d - \bar{z}_bz_c = 1$, one obtains that $f(g) \in \mathrm{SL}_2(\mathbb{Z}) \cup i \mathrm{SL}_2(\mathbb{Z})$.

\item[] If $z_a \neq 0$, then suppose $a_2 \neq 0$. We know, by (\ref{Property 13, iii}), that $a_1 d_1 + a_2 d_2 - b_1 c_1 - b_2 c_2 = 1$ but this can be written as
\begin{align}
a_1 d_1 + a_2 d_2 - b_1 c_1 - b_2 c_2 &= 1 \nonumber \\
\frac{a_2 ^2 a_1 d_1 - a_2^2 b_1 c_1}{a_2^2} + a_2 d_2 - b_2 c_2 &= 1 \nonumber \\
\frac{a_2 a_1^2 d_2 - a_2 b_1 a_1 c_2}{a_2^2} + a_2 d_2 - b_2 c_2 &= 1 \nonumber \\
\frac{a_1 ^2 a_2 d_2 - a_2 a_1 b_2 c_1}{a_2^2} + a_2 d_2 - b_2 c_2 &= 1 \nonumber \\
\frac{a_1 ^2 a_2 d_2 - a_1^2 b_2 c_2}{a_2^2} + a_2 d_2 - b_2 c_2 &= 1 \nonumber \\
\frac{a_1 ^2 + a_2^2}{a_2^2}(a_2 d_2 - b_2 c_2) &= 1 \nonumber
\end{align}
and this implies $a_1 = 0$. By using $z_a \bar{z}_c, z_b \bar{z}_d, z_a\bar{z}_d, \bar{z}_bz_c \in \mathbb{Z}$ and $z_a\bar{z}_d - \bar{z}_bz_c = 1$ one has $f(g) \in i \mathrm{SL}_2(\mathbb{Z})$. If $a_2 = 0$ then one can similarly see that $f(g) \in \mathrm{SL}_2(\mathbb{Z})$.

We have proved that in all the cases $f(g) \in \mathrm{SL}_2(\mathbb{Z}) \cup i \mathrm{SL}_2(\mathbb{Z})$. It is clear that $f$ is injective and each element in $\mathrm{SL}_2(\mathbb{Z}) \cup i \mathrm{SL}_2(\mathbb{Z})$ corresponds to an element in $C(T_{13})$. Therefore $f:C(T_{13}) \rightarrow \mathrm{SL}_2(\mathbb{Z}) \cup i \mathrm{SL}_2(\mathbb{Z})$ is an isomorphism and $\chi(C(T_{13})) = \frac{\zeta(-1)}{2} = -\frac{1}{24}$.
\end{itemize}

\subsection{Centralizer and Euler characteristic of  $T_{18}$, $T_{19},$ $T_{20},$ $T_{21},$ $T_{43},$   $T_{44},$ $ T_{45}, $ and $T_{46}$} Here  $$ T_{18}= S= \left( \begin {array}{cccc} 0&1&0&0\\ 0&0&-1&0\\ 0&0&-1&1\\ 1&1&-1&0\end {array} \right)  \,, \qquad S^{5} = id_4 \,.$$  Also, $T_{43} =- S$ and 
\begin{eqnarray*} 
& T_{19} = S^{2}, \quad  & T_{20} = S^{3}, \qquad  T_{21}= S^{4} \,,\\
& T_{44} = - S^{2}, \quad  & T_{45} = - S^{3}, \qquad  T_{46}=- S^{4}  \,.
\end{eqnarray*}

Therefore they all have the same centralizer and it is enough to calculate just one of them. Let  $g \in C(S)$ then solving $g S = S g$ one can see that $g$ is of the form  
$$ g =  \left( \begin {array}{cccc} 
a & b & c & d \\ 
d & a + d &-b-c-d & c \\ 
-c & -c-d & a -b & b+c+d \\ 
b+d & b-c & -b & a+c+d \end {array} \right)$$   

and the fact that $g^{t} J g=J$ implies, among other conditions, that 
\begin{align}
 1 &= 2cd+2d^2+c^2+b^2+db+a^2+ca+2da \nonumber \\
 &= \left(\frac{a}{2} + d + c\right)^2 + \left(\frac{d}{2} + b\right)^2 + \left(\frac{d}{\sqrt{2}} + \frac{a}{\sqrt{2}}\right)^2 + \left(\frac{d}{2}\right)^2 + \left(\frac{a}{2}\right)^2 \nonumber.
\end{align}

Following these conditions we get, through a case by case study, that $$C(T_{18}) = \left\{id_4, S, S^2, S^3, S^4, -id_4, -S, -S^2, -S^3, -S^4 \right\}.$$ Hence $$\chi(C(T_{n})) = \frac{1}{10}, \quad \mr{for} \quad n \in \left\{18, 19, 20, 21, 43, 44,45,46 \right\}\,.$$ 

\subsection{Centralizer and Euler characteristic of $T_{35}, T_{36}, T_{37}, T_{38}, T_{47}  \, \mr{and} \, T_{48}$ }  Consider
$$ T_{35} = id_2 \circ W = \left( \begin {array}{cccc} 0&1&0&0\\ 0&0&-1&0
\\ 0&0&0&1\\ 1&0&-1&0\end {array}\right)\,. $$
Let $g \in C(T_{35})$, then as $g \, T_{35} = T_{35}\, g$ we get that $g$ has the form $$ g = \left( \begin {array}{cccc} a_1 &a_2&b_1&b_2\\ b_2&a_1&-a_2-b_2& b_1
\\ -b_1&-b_2&a_1+b_1&a_2+b_2\\ a_2+b_2&-b_1&-a_2 & a_1+b_1\end {array}\right)\,.$$

Using the fact that $g^{t} J g =J$ we get, among other conditions, that
$$ a_1^2 + a_1 b_1 + b_1^2  + a_2^2 + a_2 b_2 + b_2^2=1 $$

and by using that condition one can verify that $C(T_{35})$ is the group of $12$ elements $\Z_6 \times \Z_2$ generated by $T_{35}$ and $-id_4$.
Hence, $$\chi(C(T_{35})) = \frac{1}{12}\,.$$

Using the same procedure one can show that $$\chi(C(T_{36})) = \chi(C(T_{37})) = \chi(C(T_{38})) = \chi(C(T_{47})) = \chi(C(T_{48})) = \frac{1}{12}\,,$$ the only difference is that for $k \in \left\{47, 48\right\}$, $C(T_k)$ is the group $\Z_{12}$ generated by $T_k$.

\subsection{Centralizer and Euler characteristic of $T_{39}, T_{40}$}   Let $g$ be an element of $C(T_{39})$. By the fact that  $g T_{39} =T_{39} g$, we get that $g$ is of the form $\tmt{x}{y}{-y}{x}$ where $x =\tmt{a_1}{a_2}{b_2}{a_1}$ and $y = \tmt{b_1}{b_2}{-a_2}{b_1}$. As $g^{t} J g =J$, one can see that
$$a_1^{2}+ a_2^{2}+ b_2^{2}+ b_1^{2} =1$$

and from that condition one can finally verify that $C(T_{39})$ is the group $\Z_8$ generated by $T_{39}$. Following a similar procedure, one can show that the same holds for $C(T_{40})$. Therefore,
$$\chi(C(T_{39})) = \chi(C(T_{40}))  = \frac{1}{8}.$$

\subsection{Centralizer and Euler characteristic of $T_{41}, T_{42}$} We have $ T_{41} = T $ and $T_{42} = -T$, so $C(T_{41}) = C(T_{42})$. Let $ g \in C(T_{41})$. As $g T_{41} = T_{41} g$  one can see that $g$ has the form $$g=\left(\begin {array}{cccc} a_1& a_2& b_1& b_2 \\ -2 b_1+2 b_2- a_2 & a_1+ 
b_1-b_2+a_2 &-a_2 & 2 b_1 -  b_2 + a_2 \\ -3 b_1+2 b_2 -2 a_2 &-2 b_2
+2 b_1+a_2& a_1&2 a_2+2 b_1- b_2 \\ 2 a_2+2b_1- b_2&-2 b_1+
b_2- a_2&b_2& b_2- a_2+a_1-b_1
\end {array} \right)$$

and $g^{t} J g =J$ implies, among other conditions, that
$$-2b_2a_2-4b_2b_1+2b_2^2+2a_2^2+4b_1a_2+3b_1^2+a_1^2 = 1$$
and
$$2b_2a_2+3b_2b_1-b_2^2-a_2^2-3b_1a_2-2b_1^2+a_1a_2+a_1b_2 = 0.$$
Therefore
\begin{align*}
1 &= -b_2b_1+b_2^2+a_2^2+b_1a_2+b_1^2+a_1^2+a_1a_2+a_1b_2 \nonumber \\
  &= \left(\frac{a_1 + a_2}{\sqrt{2}}\right)^2 + \left(\frac{a_2 + b_1}{\sqrt{2}}\right)^2 + \left(\frac{b_1 - b_2}{\sqrt{2}}\right)^2 + \left(\frac{b_2 + a_1}{\sqrt{2}}\right)^2 .\nonumber
\end{align*}
%a_1^2 + a_1 a_2 + a_1 b_2 +a_2^2 +a_2 b_1 +b_1^2 - b_1 b_2 + b_2^2 =1 
Analyzing this and considering all possible choices one can see that $C(T_{41})$ is the group $\Z_8$ generated by $T_{41}$ and
$$ \chi(C(T_{41})) = \chi(C(T_{42})) = \frac{1}{8}.$$

\subsection{Orbifold Euler characteristics}

\par For the remaining cases of torsion elements $T$, one can obtain $\chi(C(T))$ by using the same ideas. Thus we decided to spare the reader the tedious calculations and we list the results in the following table.

\begin{thm}
The following is a complete list of the orbifold Euler characteristics of the centralizers of each representative of the conjugacy classes of torsion elements in $\Sp(\mathbb{Z})$.

{ \begin{center}
%\begin{table}
\scriptsize\renewcommand{\arraystretch}{2}
\begin{longtable}{|c|c|c|c|c|c|c|c|c|} 
\hline
Case &Torsion Element $T$ & $C(T)$ &  Euler Characteristic $\chi(C(T))$ \\
\hline
\hline
A & $T_1, T_2$ & $\Sp(\Z)$ & $-\frac{1}{1440}$ \\
\hline
B &$T_3$ & $\mr{SL}_2(\Z) \times \mr{SL}_2(\Z)$ & $\frac{1}{144}$ \\
\hline
C &$T_4$ & $H \subset \mr{SL}_2(\Z) \times \mr{SL}_2(\Z)$  (see  ~\ref{T4}) & $\frac{6}{144}$\\
\hline
D &$T_5, T_6, T_{22}, T_{23}$ & $\mr{U}_2(\Z[w])$ & $\frac{1}{72}$\\
\hline
E &$T_7, T_{24}$ & $\mr{U}_{(1, 1)}(\Z[w])$ & $-\frac{1}{18}$  \\
\hline
F &$T_8, T_9, T_{25}, T_{26},T_{27}, T_{28},T_{29},T_{30} $ & $\mr{SL}_2(\Z) \times \Z_6$ & $-\frac{1}{72}$ \\
\hline
G &$T_{10}, T_{11}$ & $\mr{U}_2(\Z[i])$ & $\frac{1}{32}$\\
\hline
H &$T_{12}$ & $\mr{U}_{(1, 1)}(\Z[i])$ & $-\frac{1}{16}$ \\
\hline
I &$T_{13}$ & $\mr{SL}_2(\Z) \rtimes \Z_2$ & $-\frac{1}{24}$\\
\hline
J &$T_{14},  T_{15}, T_{16}, T_{17}$ & $\mr{SL}_2(\Z) \times \Z_4$ & $-\frac{1}{48}$\\
\hline
K &$T_{18}, T_{19}, T_{20}, T_{21}, T_{43}, T_{44}, T_{45}, T_{46}$ & $\Z_{10}$ & $\frac{1}{10}$\\
\hline
L &$T_{31}, T_{32}, T_{33}, T_{34}$ & $\Z_6 \times \Z_6$ & $\frac{1}{36}$\\
\hline
M &$T_{35}, T_{36}, T_{37}, T_{38}$ and $T_{47},T_{48}$  & $\Z_6 \times \Z_2 \mbox{ and } \Z_{12}$ & $\frac{1}{12}$\\
\hline
N &$T_{39}, T_{40}, T_{41}, T_{42}$ & $\Z_8$ & $\frac{1}{8}$\\
\hline
O &$T_{49}, T_{50}, T_{51}, T_{52}, T_{53},T_{54},T_{55},T_{56}$ & $\Z_6 \times \Z_4$ & $\frac{1}{24}$\\
\hline
\hline
\hline
\caption{Orbifold Euler characteristics of the centralizers of torsion elements}\label{orbifoldeuler}
\end{longtable}
%\end{table}
\end{center}
}
\end{thm}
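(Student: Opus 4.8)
The plan is to establish each row of the table by computing, for a chosen representative $T_i$ of the conjugacy class, its centralizer $C(T_i) \subseteq \Sp(\Z)$ and then the orbifold Euler characteristic of that centralizer. To pin down $C(T_i)$ I would impose the two defining conditions simultaneously: the commutation relation $g T_i = T_i g$, which is linear in the entries of $g$ and fixes the matrix shape, and the symplectic relation $g^t J g = J$, which then cuts out the integral points by a system of quadratic equations in the surviving free entries. Once $C(T_i)$ is identified as an explicit group, I would compute $\chi$ using only three inputs: the multiplicativity $\chi(\Gamma_0) = \chi(\Gamma_1)\chi(\Gamma_2)$ across short exact sequences (in particular the index formula $\chi(H) = [\Gamma:H]\,\chi(\Gamma)$ for a finite-index subgroup $H$), the formula $\chi(\Gamma_0) = 1/|\Gamma_0|$ for finite groups, and the two base values $\chi(\mr{SL}_2(\Z)) = -\tfrac{1}{12}$ and $\chi(\Sp(\Z)) = -\tfrac{1}{1440}$ recorded in the Lemma above.

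Before computing I would exploit the evident symmetries to reduce the number of genuinely distinct cases. Transposition $T \mapsto T^t$ and negation $T \mapsto -T$ send a torsion element to another one on Yang's list with an equal or conjugate centralizer, so that large blocks of rows (for instance $T_5,T_6,T_{22},T_{23}$, or the block $T_{49},\dots,T_{56}$) collapse to a single computation. The surviving computations then fall into a handful of recognizable shapes: the central elements $T_1,T_2$, whose centralizer is all of $\Sp(\Z)$; elements whose centralizer is $\mr{SL}_2(\Z)\times\mr{SL}_2(\Z)$ or a finite-index subgroup thereof, or $\mr{SL}_2(\Z)\times(\text{finite cyclic})$; elements giving a purely finite group (a cyclic group, or a product such as $\Z_6\times\Z_6$ or $\Z_6\times\Z_4$), for which one only enumerates the solutions of the quadratic constraints and inverts the order; and elements whose centralizer is a unitary group over $\Z[\omega]$ or $\Z[i]$. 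The quadratic relations are precisely the defining equations of such a unitary group once the free $2\times 2$ blocks $x,y$ of $g$ are assembled into $x + \omega y$ (resp. $x + iy$).

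The \emph{definite} unitary groups $\mr{U}_2(\Z[\omega])$ and $\mr{U}_2(\Z[i])$ are finite, so their contribution is immediate once their orders ($72$ and $32$) are found by listing the admissible pairs of entries; the same element-counting disposes of the cyclic cases. The genuine obstacle is the \emph{indefinite} unitary groups $\mr{U}_{(1,1)}(\Z[\omega])$ and $\mr{U}_{(1,1)}(\Z[i])$ arising for $T_7$ and $T_{12}$: these are infinite arithmetic groups, so neither element-counting nor a naive product decomposition applies. Here I would first identify the special unitary subgroup $\mr{SU}_{(1,1)}$ with an explicit congruence subgroup of $\mr{SL}_2(\Z)$ by conjugating with a fixed matrix $D$ (as in the two Propositions), reading off its index ($4$ and $3$ respectively) and hence $\chi(\mr{SU}_{(1,1)}) = [\,\mr{SL}_2(\Z):\mr{SU}_{(1,1)}\,]\,\chi(\mr{SL}_2(\Z))$. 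I would then determine the index of $\mr{SU}_{(1,1)}$ inside $\mr{U}_{(1,1)}$ via the determinant map onto the relevant roots of unity ($6$ and $4$), and divide: $\chi(\mr{U}_{(1,1)}) = \chi(\mr{SU}_{(1,1)})/[\,\mr{U}_{(1,1)}:\mr{SU}_{(1,1)}\,]$, which yields $-\tfrac{1}{18}$ and $-\tfrac{1}{16}$. A last subtlety is $T_{13}$, where the centralizer is $\mr{SL}_2(\Z)\cup i\,\mr{SL}_2(\Z)$ containing $\mr{SL}_2(\Z)$ with index $2$; there one applies the index formula in the reverse direction to get $\chi = \tfrac{1}{2}\chi(\mr{SL}_2(\Z)) = -\tfrac{1}{24}$. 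Assembling all cases produces exactly the stated table.
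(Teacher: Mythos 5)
Your proposal is correct and follows essentially the same route as the paper: solving $gT_i=T_ig$ together with $g^tJg=J$ case by case, collapsing cases via transposition and negation, recognizing the centralizers as finite groups, products involving $\mathrm{SL}_2(\Z)$, or unitary groups over $\Z[\omega]$ and $\Z[i]$, and handling the indefinite cases $T_7$, $T_{12}$ exactly as the paper does via the index of $\mathrm{SU}_{(1,1)}$ in $\mathrm{SL}_2(\Z)$ and in $\mathrm{U}_{(1,1)}$ (and $T_{13}$ via the index-$2$ overgroup of $\mathrm{SL}_2(\Z)$). All the numerical conclusions agree with the table.
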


%----------------------------------------------------------------------------------------
%	  Section ---- Traces of Torsion elements
%-----------------------------------------------------------------------------

\section{Traces of Torsion Elements}\label{Traces}

In this section we calculate the traces of the torsion elements of $\Gamma$ with respect to every finite dimensional irreducible representation of $\mathrm{Sp}_4$ with highest weight $\lambda$. At first we only calculate this trace with respect to the symmetric powers $Sym^n V$ of the standard representation $V$ of $\mathrm{Sp}_4$ (i.e. when $\lambda =  n \lambda_1$). We will strongly use the fact that the symmetric power representation of $\mathrm{Sp}_4$ is the restriction to $\mathrm{Sp}_4$ of the symmetric power of the standard representation of $\mathrm{GL}_4$. Therefore, to calculate the trace of a torsion element $T \in \Gamma$ it suffices to calculate the trace of a conjugate element of $T$ in $\mathrm{GL}_4(\mathbb{C})$ with respect to the symmetric power representation of $\mathrm{GL}_4$. We make use of the Gelfand-Cetlin basis of the finite dimensional highest weight representations of $\mathrm{GL}_4$ (see \cite{Gelfand50}). The aforementioned basis $\left\{ \xi_\Lambda \right\}$ is indexed by ``patterns" $\Lambda$, that in the special case of the symmetric power representations $Sym^n V$, can be described as triplets of numbers $(a, b, c) \in \mathbb{Z}$ satisfying $n \geq a \geq b \geq c \geq 0$. We will therefore denote the corresponding element of the basis as $\Lambda_{a, b, c}$. With this notation one has
\begin{align}
    E_{1, 1} \cdot \Lambda_{a, b, c} &= c\Lambda_{a, b, c}, \nonumber \\
    E_{2, 2} \cdot \Lambda_{a, b, c} &= (b-c)\Lambda_{a, b, c},\nonumber \\ E_{3, 3} \cdot \Lambda_{a, b, c} &= (a-b)\Lambda_{a, b, c}, \nonumber \\
    E_{4, 4} \cdot \Lambda_{a, b, c} &= (n-a)\Lambda_{a, b, c}, \nonumber
\end{align}
where $E_{ii} \in \mathfrak{gl}_4(\mathbb{C})$ denotes the diagonal matrix whose entries are all $0$ except for a $1$ in the $i$-th place.

We obtain the traces with respect to any other highest weight by using the following result comming from the theory of the Weyl character forumula:

\begin{thm} \label{generalize}
For two nonnegative integers $n_1 \geq n_2$, let $\lambda$ be $(n_1-n_2) \lambda_1 + n_2\lambda_2$ and $H_{n_1, n_2}(T)$ denote the trace $Tr(T, \m_\lambda)$. For a nonnegative integer $n$, $H_{n}(T)$ denotes $Tr(T, Sym^{n}V)$. Then one has $$H_{n_1, n_2}(T) = H_{n_1}(T)(H_{n_2}(T) + H_{n_2-2}(T)) -  (H_{n_1+1}(T)+H_{n_1-1}(T))H_{n_2-1}(T),$$
where, in the case $n < 0$ one uses the convention $H_n(T) = 0$.
\end{thm}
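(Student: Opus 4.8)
The plan is to reduce the statement to an identity of Weyl characters and then to a short Chebyshev–polynomial computation. Since every torsion element $T$ has finite order it is semisimple as an element of $\mr{GL}_4(\C)$, and because it lies in $\Sp$ its eigenvalues occur in reciprocal pairs $\{x_1, x_1^{-1}, x_2, x_2^{-1}\}$. The trace of $T$ on any $\m_\lambda$ is therefore the value at $(x_1, x_2)$ of the irreducible character $\chi_\lambda$, a $\mathcal{W}$-invariant Laurent polynomial in $x_1, x_2$. Moreover $Sym^n V$ is the irreducible representation of $\Sp$ of highest weight $n\varepsilon_1 = n\lambda_1$, so $H_n(T) = \chi_{n\lambda_1}(x_1,x_2)$, while $H_{n_1,n_2}(T) = \chi_\lambda(x_1,x_2)$ for $\lambda = (n_1-n_2)\lambda_1 + n_2\lambda_2 = n_1\varepsilon_1 + n_2\varepsilon_2$. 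Thus it suffices to prove the corresponding identity between these characters as symmetric functions of $x_1,x_2$.

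First I would write out Weyl's character formula for the root system $C_2$, so that $\chi_\lambda$ is the quotient of the alternating sum $\sum_{w\in\mathcal{W}} \mathrm{sgn}(w)\,x^{w(\lambda+\delta)}$ by the Weyl denominator, with $\delta = 2\varepsilon_1+\varepsilon_2$ and hence $\lambda+\delta = (n_1+2)\varepsilon_1 + (n_2+1)\varepsilon_2$. Introducing $c_j = x_j + x_j^{-1}$ and the Chebyshev-type polynomials $u_k$ determined by $u_k(c_j) = (x_j^k - x_j^{-k})/(x_j - x_j^{-1})$, $u_0 = 0$, $u_1 = 1$, $u_{k+1} = c\,u_k - u_{k-1}$, one checks that the Weyl denominator factors as $(x_1-x_1^{-1})(x_2-x_2^{-1})(c_1-c_2)$ and that the numerator carries the same factor $(x_1-x_1^{-1})(x_2-x_2^{-1})$. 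After cancellation this yields the compact expressions
\begin{equation}
H_n = \frac{u_{n+2}(c_1) - u_{n+2}(c_2)}{c_1 - c_2}, \qquad \chi_\lambda = \frac{u_{n_1+2}(c_1)\,u_{n_2+1}(c_2) - u_{n_1+2}(c_2)\,u_{n_2+1}(c_1)}{c_1 - c_2}, \nonumber
\end{equation}
and one verifies that the convention $H_n = 0$ for $n<0$ is consistent with $u_0 = 0$ and $u_1 = 1$.

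Next I would use the recurrence in the form $u_{k+1} + u_{k-1} = c\,u_k$ to rewrite the two symmetric combinations in the statement as divided differences: $H_{n_2}+H_{n_2-2}$ is the divided difference of $c\,u_{n_2+1}$, while $H_{n_1+1}+H_{n_1-1}$ is that of $c\,u_{n_1+2}$. Writing $f = u_{n_1+2}$, $g = u_{n_2+1}$ and $f_j = f(c_j)$, $g_j = g(c_j)$, the right-hand side of the asserted formula becomes
\begin{equation}
\frac{f_1 - f_2}{c_1 - c_2}\cdot\frac{c_1 g_1 - c_2 g_2}{c_1 - c_2} - \frac{c_1 f_1 - c_2 f_2}{c_1 - c_2}\cdot\frac{g_1 - g_2}{c_1 - c_2}. \nonumber
\end{equation}
Expanding the numerator, the terms $c_1 f_1 g_1$ and $c_2 f_2 g_2$ cancel and the remainder equals $(c_1-c_2)(f_1 g_2 - f_2 g_1)$, so the whole expression collapses to $(f_1 g_2 - f_2 g_1)/(c_1-c_2) = \chi_\lambda$. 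This last step is a purely algebraic $2\times2$ determinant manipulation valid for arbitrary $f$ and $g$, which is what makes the argument short.

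I expect no serious obstacle; the only points needing care are the bookkeeping in the Weyl character formula, namely the precise exponents coming from $\lambda+\delta$ and the factorization of the denominator into $(x_1-x_1^{-1})(x_2-x_2^{-1})(c_1-c_2)$, together with checking that the degenerate cases $n_2\in\{0,1\}$, where $H_{-1}$ and $H_{-2}$ would enter, are correctly reproduced by $u_0 = 0$ and $u_1 = 1$. Once both characters are expressed as divided differences of the Chebyshev polynomials $u_k$, the identity follows immediately from the elementary determinant computation above.
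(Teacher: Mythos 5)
Your proof is correct. The paper itself does not prove Theorem \ref{generalize}; it only cites Proposition 24.22 of Fulton--Harris, which is the general Jacobi--Trudi/Giambelli-type determinantal expression for irreducible $\mathfrak{sp}_{2n}$-characters in terms of symmetric powers of the standard representation (the stated identity is exactly the $2\times 2$ determinant $\det\bigl(\begin{smallmatrix} H_{n_1} & H_{n_1+1}+H_{n_1-1}\\ H_{n_2-1} & H_{n_2}+H_{n_2-2}\end{smallmatrix}\bigr)$). What you supply instead is a self-contained rank-two verification: the $C_2$ Weyl character formula with $\lambda+\delta=(n_1+2)\varepsilon_1+(n_2+1)\varepsilon_2$, the factorization of numerator and denominator through $(x_j-x_j^{-1})$ and the Chebyshev-type polynomials $u_k$, and the collapse of the right-hand side to $(f_1g_2-f_2g_1)/(c_1-c_2)$ via the three-term recurrence. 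I checked the key computations: the denominator does factor as $(x_1-x_1^{-1})(x_2-x_2^{-1})(c_1-c_2)$, the expressions for $H_n$ and $\chi_\lambda$ are correct (using that $Sym^nV$ is irreducible of highest weight $n\varepsilon_1$ for $\mathfrak{sp}_4$), the boundary cases $H_{-1}=H_{-2}=0$ come out of $u_1=1$, $u_0=0$, and the final determinant manipulation is right. The only point you leave implicit is the locus $c_1=c_2$, where the divided differences have removable singularities; since all the quantities involved are polynomials in $c_1,c_2$, the identity established on the dense set $c_1\neq c_2$ holds identically, so this is harmless. Your route buys a short, fully explicit proof tailored to $\Sp_4$; the citation buys the statement for all ranks without computation.
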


(See \cite{FultonHarris}, Proposition 24.22, 
page 406, for the proof of this theorem).

\subsection{Traces with respect to $Sym^{n}V$} \label{Traces Sym}

In this section we will denote by $diag(d_1, d_2, d_3, d_4)$ the corresponding $4 \times 4$ diagonal matrix. For an element $v$ of the $n^{th}$ symmetric power $Sym^n(V)$ of the standard representation $V$ of $\mathrm{GL}_4$, we will denote by $X.v$ the action of an element $X \in \mathfrak{gl}_4(\mathbb{C})$ and by $\rho_n(g)(v)$ the action of an element $g \in \mathrm{GL}_4(\mathbb{C})$. Therefore one has the following commutative diagram
$$\xymatrixcolsep{6pc}\xymatrix{
\mr{Sp}_4(\C) \ar@{<-}^{exp}[d]  \ar@{->}^{\rho_{n}}[r] & \mr{GL}(\m_\lambda) \ar@{<-}^{exp}[d] \\
\mathfrak{sp}_4(\C)  \ar@{->}^{d \rho_{n}}[r] & \mathfrak{gl}(\m_\lambda) }$$

In this section, for each $k \in \N$, $\xi_k$ will denote a $k$-th primitive root of unity. 

\subsubsection{Traces of $T_1$ and $T_2$} $ $

As $T_1=id_4$ and $T_2 = -id_4$, it is clear that $$Tr(T_1, Sym^n V) = dim(Sym^n V) = \frac{1}{6}(n+1)(n+2)(n+3)$$ and $Tr(T_2, Sym^n V) = (-1)^n dim(Sym^n V) = (-1)^n \frac{1}{6}(n+1)(n+2)(n+3).$

\subsubsection{Traces of $T_3$ and $T_4$} $ $

Let $T$ be $T_3$ or $T_4$. Then $T^{-1}$ is conjugate to the diagonal matrix $diag(1, 1, -1, -1)$, and therefore it suffices to study $Tr(diag(1, 1, -1, -1), Sym^n(V))$. One has $E_{44} . \Lambda_{a, b, c} = (n - a) \Lambda_{a, b, c}$ and $E_{33}.\Lambda_{a, b, c} = (a-b) \Lambda_{a, b, c}$. Therefore, as $diag(1, 1, -1, -1) =  exp(i \pi  (E_{33} + E_{44}))$, one has $\rho_n(diag(1, 1, -1, -1))(\Lambda_{a, b, c}) = (-1)^{n-b} \Lambda_{a, b, c}$ and 
\begin{align}
Tr(T^{-1}, Sym^n(V)) &= \sum_{a=0}^n \sum_{b=0}^a \sum_{c=0}^b  (-1)^{n-b} = (-1)^{n} \sum_{a=0}^n \sum_{b=0}^a (-1)^{b}(b+1). \nonumber
\end{align}
$\sum_{b=0}^a (-1)^{b}(b+1)$ is equal to $-\frac{a+1}{2}$ if $a$ is odd and is equal to $\frac{a}{2} + 1$ if $a$ is even. Now, if $n$ is odd one has
\begin{align}
Tr(T^{-1}, Sym^n(V)) &= (-1)^{n} \sum_{a=0}^n \sum_{b=0}^a (-1)^{b}(b+1) = (-1) \sum_{a=0}^\frac{n+1}{2} \left( (a+1) - \frac{2a+2}{2}\right)  = 0 \nonumber 
\end{align}
while, on the other hand, if $n$ is even one has
\begin{align}
Tr(T^{-1}, Sym^n(V)) &= \sum_{a=0}^{n} \sum_{b=0}^a (-1)^{b}(b+1) =  \sum_{b=0}^n (-1)^{b}(b+1) = \frac{n}{2} + 1. \nonumber 
\end{align}

Whence
$Tr(T^{-1}, Sym^n(V)) = 
\left\{ 
\begin{tabular}{rl}
$\frac{n}{2}+1,$ & $n \equiv 0 \mod 2$\\
$0,$ & $n \equiv 1 \mod 2$\\
\end{tabular}
\right. $.

\subsubsection{Traces of $T_5$, $T_6$, $T_7$, $T_{22}$, $T_{23}$ and $T_{24}$} $ $

$T_5$, $T_6$ and $T_7$ are conjugate to the diagonal matrix $T = diag(\xi_3, \xi_3, \xi_3^2, \xi_3^2)$. As $T_{22} = -T_{5}$, $T_{23} = -T_{6}$ and $T_{24} = -T_{7}$, one has 
$$ Tr(T_{22}^{-1}, Sym^n(V)) = Tr(T_{23}^{-1}, Sym^n(V)) = Tr(T_{24}^{-1}, Sym^n(V)) = (-1)^n Tr(T^{-1}, Sym^n(V)).$$
We will therefore calculate the trace $Tr(T^{-1}, Sym^n(V))$. We have $$(2E_{11} + 2E_{22} + E_{33} + E_{44}). \Lambda_{a, b, c} = (n+b) \Lambda_{a, b, c}$$ and therefore $\rho_n(T^{-1})(\Lambda_{a, b, c}) = \xi_3^{n+b} \Lambda_{a, b, c}$. Now, 

\begin{align}
Tr(T^{-1}, Sym^n(V)) &= \sum_{a=0}^n \sum_{b=0}^a \sum_{c=0}^b  \xi_3^{n+b} = \xi_3^{n} \sum_{a=0}^n \sum_{b=0}^a \xi_3^{b}(b+1). \nonumber
\end{align}

If $a$ is congruent to $2$ modulo $3$, then 
\begin{align}
\sum_{b=0}^a \xi_3^{b}(b+1)  &= \sum_{b=0}^{\frac{a-2}{3}} \xi_3^{3b}(3b+1) + \xi_3^{3b+1}(3b+2) + \xi_3^{3b+2}(3b+3) \nonumber \\
&= \sum_{b=0}^{\frac{a-2}{3}} \xi_3^{3b+1} + 2\xi_3^{3b+2} = \frac{a+1}{3} (\xi_3 + 2\xi_3^{2}). \nonumber
\end{align}
One can see that if $a$ is congruent to $0$ modulo $3$, then $\sum_{b=0}^a \xi_3^{-b}(b+1) =  \frac{a}{3} (\xi_3^{2} + 2) + 1$ and, if $a$ is congruent to $1$ modulo $3$, one has $\sum_{b=0}^a \xi_3^{-b}(b+1) = \frac{a+2}{3}(1 + 2 \xi_3)$.

Therefore,
$$Tr(T^{-1}, Sym^n(V)) = 
\left\{ 
\begin{tabular}{rl}
$\frac{n+3}{3},$ & $n \equiv 0 \mod 3$\\
$-\frac{2n+4}{3},$ & $n \equiv 1 \mod 3$\\
$\frac{n+1}{3},$ & $n \equiv 2 \mod 3$\\
\end{tabular}
\right. .$$

The traces of the remaining elements can be calculated with the same ideas, thus we decided to spare the reader most of the tedious details by writing a brief description of the calculations that lead to the results.

\subsubsection{Traces of $T_8$, $T_9$, $T_{27}$ and $T_{28}$} $ $

$T_8$, $T_9$ are conjugate to the diagonal matrix $T = diag(1, 1, \xi_3^2, \xi_3)$. Again, as $T_{27} = -T_{8}$, $T_{28} = -T_{9}$, one has 
$$ Tr(T_{27}^{-1}, Sym^n(V)) = Tr(T_{28}^{-1}, Sym^n(V)) = (-1)^n Tr(T^{-1}, Sym^n(V)).$$
We will therefore calculate the trace $Tr(T^{-1}, Sym^n(V))$. Note that $$(E_{33} + 2E_{44}). \Lambda_{a, b, c} = (2n-a-b) \Lambda_{a, b, c}$$ and therefore $\rho_n(T^{-1})(\Lambda_{a, b, c}) = \xi_3^{2n-a-b} \Lambda_{a, b, c}$. Now

\begin{align}
Tr(T^{-1}, Sym^n(V)) &= \sum_{a=0}^n \sum_{b=0}^a \sum_{c=0}^b  \xi_3^{2n-a-b} = \xi_3^{2n} \sum_{a=0}^n \xi_3^{-a} \sum_{b=0}^a \xi_3^{-b}(b+1). \nonumber
\end{align}

Hence,
$$Tr(T^{-1}, Sym^n (V)) = 
\left\{ 
\begin{tabular}{rl}
$\frac{n+3}{3},$ & $n \equiv 0 \mod 3$\\
$\frac{n+2}{3},$ & $n \equiv 1 \mod 3$\\
$\frac{n+1}{3},$ & $n \equiv 2 \mod 3$\\
\end{tabular}
\right.  .$$

\subsubsection{Traces of $T_{10}$, $T_{11}$, $T_{12}$ and $T_{13}$} $ $

$T_{10}$, $T_{11}$, $T_{12}$ and $T_{13}$ are conjugate to the diagonal matrix $T = diag(\xi_4, \xi_4, \xi_4^3, \xi_4^3)$. It suffices therefore to calculate the trace $Tr(T^{-1}, Sym^n(V))$. One has $$(3E_{11} + 3E_{22} + E_{33} + E_{44}). \Lambda_{a, b, c} = (n + 2b) \Lambda_{a, b, c}$$ and therefore $\rho_n(T^{-1})(\Lambda_{a, b, c}) = \xi_4^{n+2b} \Lambda_{a, b, c}$. Now, 

\begin{align}
Tr(T^{-1}, Sym^n(V)) &= \sum_{a=0}^n \sum_{b=0}^a \sum_{c=0}^b  \xi_4^{n+2b} = \xi_4^{n} \sum_{a=0}^n \sum_{b=0}^a (-1)^{b}(b+1). \nonumber
\end{align}

One can see that
$\sum_{b=0}^a (-1)^{b}(b+1) = 
\left\{ 
\begin{tabular}{rl}
$\frac{a+2}{2},$ & $a$ even\\
$-\frac{a+1}{2},$ & $a$ odd\\
\end{tabular}
\right.$. Therefore one has
$$Tr(T^{-1}, Sym^n (V)) = 
\left\{ 
\begin{tabular}{rl}
$\frac{n+2}{2},$ & $n \equiv 0 \mod 4$\\
$-\frac{n+2}{2},$ & $n \equiv 2 \mod 4$\\
$0$, & $n$ odd.
\end{tabular} \right. .$$

\subsubsection{Traces of $T_{14}$, $T_{15}$, $T_{16}$ and $T_{17}$} $ $

$T_{14}$, $T_{15}$ are conjugate to the diagonal matrix $T = diag(1, 1, \xi_4^3, \xi_4)$. As $T_{16} = -T_{14}$, $T_{17} = -T_{15}$, 
$$ Tr(T_{16}^{-1}, Sym^n(V)) = Tr(T_{17}^{-1}, Sym^n(V)) = (-1)^n Tr(T^{-1}, Sym^n(V)).$$

One has $$(E_{33} + 3E_{44}). \Lambda_{a, b, c} = (3n-2a-b) \Lambda_{a, b, c}$$ and $\rho_n(T^{-1})(\Lambda_{a, b, c}) = \xi_4^{3n-2a-b} \Lambda_{a, b, c}$. Therefore

\begin{align}
Tr(T^{-1}, Sym^n(V)) &= \xi_4^{3n} \sum_{a=0}^n (-1)^{a} \sum_{b=0}^a \xi_4^{-b}(b+1). \nonumber
\end{align}

Thus, one can see that
$$Tr(T^{-1}, Sym^n (V)) = 
\left\{ 
\begin{tabular}{rl}
$\frac{n+2}{2},$ & $n$ even \\
$\frac{n+3}{2},$ & $n \equiv 1 \mod 4$ \\
$\frac{n+1}{2},$ & $n \equiv 3 \mod 4$ \\
\end{tabular}
\right.  .$$

\subsubsection{Traces of $T_{18}$, $T_{19}$, $T_{20}$, $T_{21}$, $T_{43}$, $T_{44}$, $T_{45}$ and $T_{46}$} $ $

$T_{18}$, $T_{19}$, $T_{20}$, $T_{21}$ are conjugate to $T = diag(\xi_5, \xi_5^2, \xi_5^3, \xi_5^4)$, and $T_{43}$, $T_{44}$, $T_{45}$, $T_{46}$ are conjugate to -$T$.

One has $$(4E_{11} + 3E_{22} + 2E_{33} + E_{44}). \Lambda_{a, b, c} = (n+a+b+c) \Lambda_{a, b, c}$$ and therefore $\rho_n(T^{-1})(\Lambda_{a, b, c}) = \xi_5^{n+a+b+c} \Lambda_{a, b, c}$. Therefore

\begin{align}
Tr(T^{-1}, Sym^n(V)) &= \xi_5^{n} \sum_{a=0}^n \xi_5^{a} \sum_{b=0}^a \xi_5^b  \sum_{c=0}^b \xi_5^c. \nonumber
\end{align}

Hence one can finally see that
$$Tr(T^{-1}, Sym^n (V)) = 
\left\{ 
\begin{tabular}{rl}
$1,$ & $n \equiv 0 \mod 5$ \\
$-1,$ & $n \equiv 1 \mod 5$ \\
$0,$ & otherwise
\end{tabular}
\right.  .$$

\subsubsection{Traces of $T_{25}$, $T_{26}$, $T_{29}$ and $T_{30}$} $ $

$T_{25}$, $T_{26}$ are conjugate to the diagonal matrix $T = diag(1, 1, \xi_6^5, \xi_6)$ and $T_{29} = -T_{25}$, $T_{30} = -T_{26}$.

One has $$(E_{33} + 5E_{44}). \Lambda_{a, b, c} = (5n-4a-b) \Lambda_{a, b, c}$$ and $\rho_n(T^{-1})(\Lambda_{a, b, c}) = \xi_6^{5n-4a-b} \Lambda_{a, b, c}$. Therefore

\begin{align}
Tr(T^{-1}, Sym^n(V)) &= \xi_6^{5n} \sum_{a=0}^n \xi_6^{-4a} \sum_{b=0}^a \xi_6^{-b}(b+1) \nonumber
\end{align}

and we have
$$\sum_{b=0}^a \xi_6^{-b}(b+1) = 
\left\{ 
\begin{tabular}{rl}
$1 + \frac{a}{2}(1 - \xi_6^{4} - \xi_6^{5}),$ & $a \equiv 0 \mod 6$\\
$1 + 2 \xi_6^5 + \frac{a-1}{2}(1 - \xi_6^{4} + \xi_6^{5}),$ & $a \equiv 1 \mod 6$\\
$1 + 2 \xi_6^5 + 3 \xi_6^4 + \frac{a-2}{2}(1 + \xi_6^4 + \xi_6^5),$ & $a \equiv 2 \mod 6$\\
$2 \xi_6^5 + 3 \xi_6^4 + 3 \xi_6^3 + \frac{a-3}{2}(\xi_6^3 - \xi_6^2 - \xi_6),$ & $a \equiv 3 \mod 6$\\
$3 \xi_6^2 + 3 \xi_6^3 + 3 \xi_6^4 + \frac{a-4}{2}(\xi_6^3 + \xi_6^2 - \xi_6),$ & $a \equiv 4 \mod 6$\\
$\frac{a+1}{2}(\xi_6 + \xi_6^{2} + \xi_6^{3}),$ & $a \equiv 5 \mod 6$\\
\end{tabular}
\right..$$

By using this, one can show that 
$$Tr(T^{-1}, Sym^n (V)) = 
\left\{ 
\begin{tabular}{rl}
$n + 1,$ & $n \equiv 0 \mbox{ or } 5 \mod 6$\\
$n + 2,$ & $n \equiv 1 \mbox{ or } 4 \mod 6$\\
$n + 3,$ & $n \equiv 2 \mbox{ or } 3 \mod 6$\\
\end{tabular}
\right..$$

\subsubsection{Traces of $T_{31}$, $T_{32}$, $T_{33}$, $T_{34}$, $T_{35}$, $T_{36}$, $T_{37}$ and $T_{38}$} $ $

In this case, these torsion elements are conjugate to the diagonal matrix $T = diag(\xi_6^5, \xi_3^2, \xi_3, \xi_6)$ .

One has $$(E_{11} + 2E_{22} + 4E_{33} + 5E_{44}). \Lambda_{a, b, c} = (5n-a-2b-c) \Lambda_{a, b, c}$$ and $\rho_n(T^{-1})(\Lambda_{a, b, c}) = \xi_6^{5n-a-2b-c} \Lambda_{a, b, c}$. Therefore

\begin{align}
Tr(T^{-1}, Sym^n(V)) &= \xi_6^{5n} \sum_{a=0}^n \xi_6^{-a} \sum_{b=0}^a \xi_6^{-2b} \sum_{c=0}^b \xi_6^{-c}. \nonumber
\end{align}

And one can see that

$$Tr(T^{-1}, Sym^n (V)) = 
\left\{ 
\begin{tabular}{rl}
$1,$ & $a \equiv 0 \mod 6$\\
$-1,$ & $a \equiv 2 \mod 6$\\
$0,$ & otherwise\\
\end{tabular}
\right..$$

\subsubsection{Traces of $T_{39}$, $T_{40}$, $T_{41}$ and $T_{42}$} $ $

These torsion elements are conjugate to $T = diag(\xi_8, \xi_8^3, \xi_8^5, \xi_8^7)$. Therefore it suffices to calculate the trace $Tr(T^{-1}, Sym^n(V))$. One has $$(7E_{11} + 5E_{22} + 3E_{33} + E_{44}). \Lambda_{a, b, c} = (n + 2a + 2b + 2c) \Lambda_{a, b, c}$$ and therefore $\rho_n(T^{-1})(\Lambda_{a, b, c}) = \xi_8^{n+2a+2b+2c} \Lambda_{a, b, c}$. Now, 

\begin{align}
Tr(T^{-1}, Sym^n(V)) &= \xi_8^n \sum_{a=0}^n \xi_8^{2a} \sum_{b=0}^a \xi_8^{2b} \sum_{c=0}^b  \xi_8^{2c}. \nonumber
\end{align}

One can see that
$\sum_{b=0}^a \xi_8^{2b} \sum_{c=0}^b  \xi_8^{2c} = 
\left\{ 
\begin{tabular}{rl}
$1,$ & $a \equiv 0 \mod 4$\\
$\xi_4,$ & $a \equiv 1 \mod 4$\\
$0,$ & otherwise\\
\end{tabular}
\right.$.

and

$$\sum_{a=0}^n \xi_8^{2a} \sum_{b=0}^a \xi_8^{2b} \sum_{c=0}^b  \xi_8^{2c} = 
\left\{ 
\begin{tabular}{rl}
$1,$ & $a \equiv 0 \mod 4$\\
$0,$ & otherwise\\
\end{tabular}
\right. .$$

Therefore one has
$Tr(T^{-1}, Sym^n (V)) = 
\left\{ 
\begin{tabular}{rl}
$(-1)^{n/4},$ & $n \equiv 0 \mod 4$\\
$0,$ & otherwise\\
\end{tabular} \right. $.

\subsubsection{Traces of $T_{47}$ and $T_{48}$} $ $

These torsion elements are conjugate to $T = diag(\xi_{12}, \xi_{12}^5, \xi_{12}^7, \xi_{12}^{11})$. One has $$(11 E_{11} + 7E_{22} + 5E_{33} + E_{44}). \Lambda_{a, b, c} = (n + 4a + 2b + 4c) \Lambda_{a, b, c}$$ and therefore $\rho_n(T^{-1})(\Lambda_{a, b, c}) = \xi_{12}^{n+4a+2b+4c} \Lambda_{a, b, c}$. Now, 

\begin{align}
Tr(T^{-1}, Sym^n(V)) &= \xi_{12}^n \sum_{a=0}^n \xi_{3}^{a} \sum_{b=0}^a \xi_{6}^{b} \sum_{c=0}^b  \xi_{3}^{c}. \nonumber
\end{align}

One has,
$$Tr(T^{-1}, Sym^n (V)) = 
\left\{ 
\begin{tabular}{rl}
$1,$ & $n \equiv 0 \mbox{ or } 2 \mod 12$\\
$-1,$ & $n \equiv 6 \mbox{ or } 8 \mod 12$\\
$0,$ & otherwise\\
\end{tabular} \right. .$$

\subsubsection{Traces of $T_{49}$, $T_{50}$, $T_{51}$, $T_{52}$, $T_{53}$, $T_{54}$, $T_{55}$ and $T_{56}$} $ $

$T_{49}$, $T_{50}$, $T_{51}$, $T_{52}$ are conjugate to $T = diag(\xi_4, \xi_3, \xi_3^2, \xi_4^3)$. Hence, it suffices to calculate the trace $Tr(T^{-1}, Sym^n(V))$. One has $$(9E_{11} + 8E_{22} + 4E_{33} + 2E_{44}). \Lambda_{a, b, c} = (3n + a + 4b + c) \Lambda_{a, b, c}$$ and therefore $\rho_n(T^{-1})(\Lambda_{a, b, c}) = \xi_{12}^{3n + a + 4b + c} \Lambda_{a, b, c}$. Now, 
\begin{align}
Tr(T^{-1}, Sym^n(V)) &= \xi_{4}^{n} \sum_{a=0}^n \xi_{12}^{a} \sum_{b=0}^a \xi_{3}^{b} \sum_{c=0}^b  \xi_{12}^{c}. \nonumber
\end{align}
By using the fact that $1 + \xi_{12}^4 + \xi_{12}^8=0$ one has
$$\xi_{12}^{a} \sum_{b=0}^a \xi_{3}^{b} \sum_{c=0}^b  \xi_{12}^{c} = 
\left\{ 
\begin{tabular}{rl}
$1,$ & $a \equiv 0 \mod 12$\\
$\xi_{12} + \xi_{12}^5 + \xi_{12}^6 = i-1,$ & $a \equiv 1 \mod 12$\\
$\xi_{12}^2 + \xi_{12}^7 + \xi_{12}^{10} + \xi_{12}^{11} = 1 - i,$ & $a \equiv 2 \mod 12$\\
$2 \xi_{12}^3 + \xi_{12}^4 + \xi_{12}^8 = 2i - 1,$ & $a \equiv 3 \mod 12$\\
$\xi_{12}^4 + \xi_{12}^8 + 2 \xi_{12}^9 + 1 = -2i,$ & $a \equiv 4 \mod 12$\\
$2 \xi_{12} + 2 \xi_{12}^5 = 2i,$ & $a \equiv 5 \mod 12$\\
$\xi_{12}^6 + 2 \xi_{12}^7 + 2 \xi_{12}^{11}   = - 2i - 1,$ & $a \equiv 6 \mod 12$\\
$1 + 2\xi_{12}^3 + \xi_{12}^{7} + \xi_{12}^{11} = 1 + i,$ & $a \equiv 7 \mod 12$\\
$\xi_{12}^4 + \xi_{12}^8 + \xi_{12}^9 = - i - 1,$ & $a \equiv 8 \mod 12$\\
$2 \xi_{12}^2 + \xi_{12}^6 + 2 \xi_{12}^{10} = 1,$ & $a \equiv 9 \mod 12$\\
$2 \xi_{12}^3 + 2 \xi_{12}^7 + 2 \xi_{12}^{11} = 0,$ & $a \equiv 10 \mod 12$\\
$2 \xi_{12}^3 + 2 \xi_{12}^7 + 2 \xi_{12}^{11} = 0,$ & $a \equiv 11 \mod 12$\\
\end{tabular}
\right.$$

and

$$Tr(T^{-1}, Sym^n (V)) = 
\left\{ 
\begin{tabular}{rl}
$1,$ & $a \equiv 0 \mbox{ or } 6 \mbox{ or } 7 \mod 12$\\
$-1,$ & $a \equiv 1 \mbox{ or } 2 \mbox{ or } 8 \mod 12$\\
$2,$ & $a \equiv 3 \mod 12$\\
$-2,$ & $a \equiv 5 \mod 12$\\
$0,$ & otherwise \\
\end{tabular}
\right..$$

\subsection{Traces with respect to highest weight representations $\m_\lambda$}

By using Theorem~\ref{generalize}, we give, in the following theorem, a complete description of the traces of torsion elements with respect to every finite dimensional irreducible representation of $\Sp$.

We introduce the following matrices that will be helpful in the aforementioned description.

\[
M_{B, C} = \left( \begin{array}{cc}
\frac{(n_1+2)(n_2+1)}{2} & 0 \\
0 & -\frac{(n_1+2)(n_2+1)}{2} \\
 \end{array}  \right),  
M_{D, E} = \left( \begin{array}{cccc}
\frac{n_1+n_2+3}{3} & \frac{n_2-n_1-1}{3} & -\frac{2n_2+2}{3} \\
-\frac{2n_1+4}{3} & \frac{2n_1+4}{3} & 0\\
\frac{n_1-n_2 + 1}{3} & -\frac{n_1 + n_2 + 3}{3} & \frac{2n_2 + 2}{3} \\ \end{array}  \right), 
\]
\[
M_{F} = \left( \begin{array}{cccc}
\frac{n_1 + n_2 + 3}{3} & -\frac{n_1 - n_2 + 1}{3} & \frac{n_2 + 1}{3} \\
\frac{n_1 + 2}{3} & -\frac{n_1 + 2}{3} & 0 \\
\frac{n_1 - n_2 + 1}{3} & -\frac{n_1 +n_2 +3}{3} & -\frac{n_2 +1}{3} \end{array}  \right),
M_{J} = \left( \begin{array}{cccc}
\frac{n_1+2}{2} & 0 & -\frac{n_1+2}{2} & 0\\
\frac{n_1+n_2+3}{2} & \frac{n_2+1}{2} & -\frac{n_1-n_2+1}{2} & \frac{n_2+1}{2}\\
\frac{n_1+2}{2} & 0 & -\frac{n_1+2}{2} & 0 \\
\frac{n_1-n_2+1}{2} & -\frac{n_2+1}{2} & -\frac{n_1+n_2+3}{2} & -\frac{n_2+1}{2}\\ \end{array}  \right), 
\]
\[
M_{F_2} = \left( \begin{array}{cccccc}
n_1 - n_2 + 1 & n_1 - n_2 + 1 & -n_2 -1 & -n_1 - n_2 -3 & -n_1 -n_2 -3 & -n_2 -1 \\
n_1 + 2 & n_1 + 2 & 0 & -n_1 -2 & -n_1 -2 & 0 \\
n_1 + n_2 + 3 & n_1 + n_2 + 3 & n_2 + 1 & -n_1 + n_2 -1 & -n_1 + n_2 -1 & n_2 + 1 \\
n_1 + n_2 + 3 & n_1 + n_2 + 3 & n_2 + 1 & -n_1 + n_2 -1 & -n_1 + n_2 -1 & n_2 + 1 \\
n_1 + 2 & n_1 + 2 & 0 & -n_1 -2 & -n_1 -2 & 0 \\
n_1 -n_2 + 1 & n_1 -n_2 + 1 & -n_2 -1 & -n_1 -n_2 -3 & -n_1 -n_2 -3 & -n_2 -1 \end{array}  \right),
\]
\[
M_{G, H, I} =  \left( \begin{array}{cccc}
\frac{n_1 + 2}{2} & 0 & -\frac{n_1 + 2}{2} & 0 \\
0 & \frac{n_2 + 1}{2} & 0 & -\frac{n_2 + 1}{2} \\
-\frac{n_1 + 2}{2} & 0 & \frac{n_1 + 2}{2} & 0 \\
0 & -\frac{n_2 + 1}{2} & 0 & \frac{n_2 + 1}{2} \end{array}  \right),
\]
\[
M_{K} = \left( \begin{array}{rrrrrrrrrrrrrrrrrrrr}
1 & 0 & 0 & -1 & 0 \\
-1 & 0 & 0 & 1 & 0 \\
0 & 1 & -1 & 0 & 0 \\
0 & 0 & 0 & 0 & 0 \\
0 & -1 & 1 & 0 & 0 \end{array}  \right),
M_{L, M} = \left( \begin{array}{rrrrrrrrrrrr}
1 & 0 & 0 & 0 & -1 & 0\\
0 & 0 & 0 & 0 & 0 & 0\\
-1 & 0 & 0 & 0 & 1 & 0\\
0 & 1 & 0 & -1 & 0 & 0\\
0 & 0 & 0 & 0 & 0 & 0\\
0 & -1 & 0 & 1 & 0 & 0 \end{array}  \right), 
\]
\[
M_{N} = \left( \begin{array}{rrrrrrrrrrrrrrrrrrr}
1 & 0 & 1 & 0 & -1 & 0 & -1 & 0 \\
0 & -1 & 0 & 0 & 0 & 1 & 0 & 0 \\
0 & 0 & 0 & 0 & 0 & 0 & 0 & 0 \\
0 & 1 & 0 & 0 & 0 & -1 & 0 & 0 \\
-1 & 0 & -1 & 0 & 1 & 0 & 1 & 0 \\
0 & 1 & 0 & 0 & 0 & -1 & 0 & 0 \\
0 & 0 & 0 & 0 & 0 & 0 & 0 & 0 \\
0 & -1 & 0 & 0 & 0 & 1 & 0 & 0 \end{array}  \right),
\]
\[
M_M = \left( \begin{array}{rrrrrrrrrrrrrrrrrrrrrrr}
1 & 0 & 2 & 0 & 1 & 0 & -1 & 0 & -2 & 0 & -1 & 0 \\
0 & -2 & 0 & -2 & 0 & 0 & 0 & 2 & 0 & 2 & 0 & 0 \\
1 & 0 & 2 & 0 & 1 & 0 & -1 & 0 & -2 & 0 & -1 & 0 \\
0 & -1 & 0 & -1 & 0 & 0 & 0 & 1 & 0 & 1 & 0 & 0 \\
0 & 0 & 0 & 0 & 0 & 0 & 0 & 0 & 0 & 0 & 0 & 0 \\
0 & 1 & 0 & 1 & 0 & 0 & 0 & -1 & 0 & -1 & 0 & 0 \\
-1 & 0 & -2 & 0 & -1 & 0 & 1 & 0 & 2 & 0 & 1 & 0 \\
0 & 2 & 0 & 2 & 0 & 0 & 0 & -2 & 0 & -2 & 0 & 0 \\
-1 & 0 & -2 & 0 & -1 & 0 & 1 & 0 & 2 & 0 & 1 & 0 \\
0 & 1 & 0 & 1 & 0 & 0 & 0 & -1 & 0 & -1 & 0 & 0 \\
0 & 0 & 0 & 0 & 0 & 0 & 0 & 0 & 0 & 0 & 0 & 0 \\
0 & -1 & 0 & -1 & 0 & 0 & 0 & 1 & 0 & 1 & 0 & 0 \end{array}  \right),
\] 
\[
M_O = \left( \begin{array}{rrrrrrrrrrrrrrrrrrrrrrrr}
1 & 0 & -1 & 0 & 1 & 0 & -1 & 0 & 1 & 0 & -1 & 0\\
-1 & 1 & 0 & -1 & 1 & 0 & -1 & 1 & 0 & -1 & 1 & 0\\
-1 & 0 & 1 & 0 & -1 & 0 & 1 & 0 & -1 & 0 & 1 & 0\\
2 & -1 & -1 & 1 & 0 & 0 & 0 & -1 & 1 & 1 & -2 & 0\\
0 & 0 & 0 & 0 & 0 & 0 & 0 & 0 & 0 & 0 & 0 & 0\\
-2 & 1 & 1 & -1 & 0 & 0 & 0 & 1 & -1 & -1 & 2 & 0\\
1 & 0 & -1 & 0 & 1 & 0 & -1 & 0 & 1 & 0 & -1 & 0\\
1 & -1 & 0 & 1 & -1 & 0 & 1 & -1 & 0 & 1 & -1 & 0\\
-1 & 0 & 1 & 0 & -1 & 0 & 1 & 0 & -1 & 0 & 1 & 0\\
0 & 1 & -1 & -1 & 2 & 0 & -2 & 1 & 1 & -1 & 0 & 0\\
0 & 0 & 0 & 0 & 0 & 0 & 0 & 0 & 0 & 0 & 0 & 0\\
0 & -1 & 1 & 1 & -2 & 0 & 2 & -1 & -1 & 1 & 0 & 0 \end{array}  \right).
\]

\begin{thm}
\label{traces} The following is a complete list of the traces of each representative of conjugacy classes of torsion elements in $\mathrm{Sp}_4(\mathbb{Z})$ with respect to every finite dimensional irreducible representation $\mathcal{M}_\lambda$ of $\mathrm{Sp}_4$ where $\lambda = m_1 \lambda_1 + m_2 \lambda_2 = (m_1+m_2)e_1 + m_2e_2$. We denote $n_1 = m_1 + m_2$ and $n_2 = m_2$. In the table we use the following notation: if $M$ is an $n \times n$ square matrix, we denote by $M(n_1, n_2)$ the entry of the matrix $M$ in the row $n_1 = m_1 + m_2$ mod $n$ and column $n_2=m_2$ mod $n$ (where we are numbering the rows and columns from $0$ to $n-1$).

{ \begin{center} 
%\begin{table}
\scriptsize\renewcommand{\arraystretch}{2}
\begin{longtable}{|c|c|c|c|c|c|c|}
\hline
Case &Torsion Element $T$ &  $Tr(T^{-1}, \m_\lambda)$ \\
\hline
\hline
A & $T_1$ &  $\frac{1}{6}(n_1+2)(n_2+1)((n_1+2)^2 - (n_2+1)^2)$ \\
\hline
A & $T_2$ &  $(-1)^{m_1}\frac{1}{6}(n_1+2)(n_2+1)((n_1+2)^2 - (n_2+1)^2)$ \\
\hline
B, C &$T_3$, $T_4$ & $M_{B, C}(n_1, n_2)$ \\
\hline
D, E &$T_5, T_6, T_{7}$ &  $M_{D, E}(n_1, n_2)$ \\
\hline
D, E &$T_{22}, T_{23}, T_{24}$ &  $(-1)^{m_1} M_{D, E}(n_1,  n_2)$ \\
\hline
F &$T_8, T_9$ & $M_{F}(n_1,  n_2)$  \\
\hline
F &$T_{27}, T_{28}$ & $(-1)^{m_1} M_{F}(n_1,  n_2)$  \\
\hline
F &$T_{25},T_{26} $ & $M_{F_2}(n_1,  n_2)$ \\
\hline
F &$T_{29},T_{30} $ & $(-1)^{m_1} M_{F_2}(n_1,  n_2)$ \\
\hline
G, H, I &$T_{10}, T_{11}, T_{12}, T_{13}$ & $M_{G, H, I}(n_1,  n_2)$  \\
\hline
J &$T_{14},  T_{15}$ & $M_{J}(n_1,  n_2)$ \\
\hline
J &$T_{16}, T_{17}$ &  $(-1)^{m_1} M_{J}(n_1,  n_2)$ \\
\hline
K &$T_{18}, T_{19}, T_{20}, T_{21}$ & $M_{K}(n_1,  n_2)$\\
\hline
K &$T_{43}, T_{44}, T_{45}, T_{46}$ & $(-1)^{m_1} M_{K}(n_1,  n_2)$\\
\hline
L, M &$T_{31}, T_{32}, T_{33}, T_{34}, T_{35}, T_{36}, T_{37}, T_{38}$ & $M_{L, M}(n_1, n_2)$ \\
\hline
N &$T_{39}, T_{40}, T_{41}, T_{42}$ & $M_{N}(n_1, n_2)$ \\
\hline
M & $T_{47},T_{48}$ & $M_M(n_1,  n_2)$ \\
\hline
O & $T_{49}, T_{50}, T_{51}, T_{52}$ & $M_O(n_1, n_2)$\\
\hline
O & $T_{53},T_{54},T_{55},T_{56}$ & $(-1)^{m_1} M_O(n_1, n_2)$\\
\hline
\hline
\hline
\caption{Traces of torsion elements for $\lambda = m_1 \lambda_1 + m_2\lambda_2$, $n_1=m_1+m_2$ and $n_2 = m_2$}\label{tracetorsionhighestweight}
\end{longtable}
%\end{table}
\end{center}
}
\end{thm}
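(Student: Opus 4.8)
The plan is to obtain every entry of Table~\ref{tracetorsionhighestweight} by combining Theorem~\ref{generalize} with the symmetric-power computations carried out in Subsection~\ref{Traces Sym}. Writing $n_1 = m_1+m_2$ and $n_2 = m_2$, the quantity sought is $Tr(T^{-1}, \m_\lambda) = H_{n_1,n_2}(T)$, and Theorem~\ref{generalize} expresses this purely in terms of the six values $H_k(T) = Tr(T^{-1}, Sym^k V)$ for $k \in \{n_2-2,\, n_2-1,\, n_2,\, n_1-1,\, n_1,\, n_1+1\}$, with the convention $H_k = 0$ for $k<0$. Since each $H_k(T)$ was tabulated case by case in Subsection~\ref{Traces Sym}, the proof reduces to substituting those closed forms into the identity $H_{n_1,n_2}(T) = H_{n_1}(T)\bigl(H_{n_2}(T)+H_{n_2-2}(T)\bigr) - \bigl(H_{n_1+1}(T)+H_{n_1-1}(T)\bigr)H_{n_2-1}(T)$ and simplifying.

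First I would group the torsion elements by their $\mathrm{GL}_4(\C)$-conjugacy class, that is, by their multiset of eigenvalues: the symmetric-power trace $H_k(T)$ depends only on this data, so all elements sharing an admissible characteristic polynomial produce the same value (for instance $T_5,T_6,T_7$, all conjugate to $diag(\xi_3,\xi_3,\xi_3^2,\xi_3^2)$). Next I would dispose of the sign factors $(-1)^{m_1}$ appearing in the right-hand column. Whenever $T' = -T$ one has $(T')^{-1} = (-id_4)\,T^{-1}$, and $-id_4$ is central in $\Sp$; since every weight $\mu = a\varepsilon_1+b\varepsilon_2$ of $\m_\lambda$ satisfies $a+b \equiv m_1 \pmod 2$ (all weights are congruent to $\lambda$ modulo the root lattice of $C_2$), the element $-id_4$ acts on all of $\m_\lambda$ by the scalar $(-1)^{m_1}$. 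Hence $Tr((T')^{-1}, \m_\lambda) = (-1)^{m_1}\,Tr(T^{-1}, \m_\lambda)$, which is exactly how the negated classes (e.g.\ $T_{22},T_{23},T_{24}$, or $T_{16},T_{17}$, or $T_{43},\dots,T_{46}$) are recorded. This halves the number of genuinely distinct computations.

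For each remaining representative I would then substitute and simplify, and here the outcome falls into two qualitatively different shapes according to whether $T$ is regular or has a repeated eigenvalue. When $T$ has a repeated eigenvalue (as for $T_1,T_2$; $T_3,T_4$; $T_5,T_6,T_7$; $T_8,T_9$; $T_{10},\dots,T_{13}$; $T_{14},T_{15}$; $T_{25},T_{26}$) the relevant $H_k(T)$ grow polynomially in $k$, so $H_{n_1,n_2}(T)$ is a genuine polynomial on each residue class of $(n_1,n_2)$ — this yields the Weyl-dimension formulas of rows~A and the linear entries of $M_{B,C}$, $M_{D,E}$, $M_F$, $M_{F_2}$, $M_{G,H,I}$ and $M_J$. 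When $T$ is regular (distinct eigenvalues, as for $T_{18}$, the $\Phi_8$-, $\Phi_{10}$-, $\Phi_{12}$-classes, and the $\Phi_3\Phi_4$/$\Phi_3\Phi_6$-classes) the $H_k(T)$ are bounded and purely periodic, so $H_{n_1,n_2}(T)$ depends only on $(n_1 \bmod p,\, n_2 \bmod p)$ for a period $p$ equal to the order of $T$; this gives the constant-entry matrices $M_K$, $M_{L,M}$, $M_N$, $M_M$ and $M_O$. In all cases the answer is the quasi-polynomial in $(n_1,n_2)$ tabulated by $M_\bullet(n_1,n_2)$.

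The main obstacle is bookkeeping rather than anything conceptual. In the polynomial cases one must carry the linear growth of $H_k(T)$ correctly through the products $H_{n_1}(H_{n_2}+H_{n_2-2})$ and $(H_{n_1+1}+H_{n_1-1})H_{n_2-1}$, since a single arithmetic slip corrupts the degree-three symmetric expression of row~A or the entries of $M_{B,C}$ and $M_F$; conversely, in the regular cases one must verify that the linear terms genuinely cancel in the bilinear formula, leaving a bounded periodic function, and must correctly identify the period. Particular care is needed at the boundary conventions $n_2 = 0,1$, where $H_{n_2-1}$ and $H_{n_2-2}$ vanish and the formula degenerates; these low residues fix the first rows and columns of each matrix and have to be checked directly. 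Once every class is verified, assembling the computed values into $M_{B,C},\dots,M_O$ completes the proof.
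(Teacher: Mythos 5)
Your proposal matches the paper's argument: the paper likewise obtains every entry of Table~\ref{tracetorsionhighestweight} by substituting the symmetric-power traces of Subsection~\ref{Traces Sym} into the identity of Theorem~\ref{generalize} and simplifying case by case. Your additional observations (the central element $-id_4$ acting by $(-1)^{m_1}$ to handle negated classes, and the dichotomy between polynomially growing and purely periodic $H_k(T)$) are correct and consistent with how the paper organizes the table.
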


%----------------------------------------------------------------------------------------
%	  Section ---- Homological Euler Characteristic
%----------------------------------------------------------------------------------------

\section{Homological Euler Characteristic}\label{homeuler}

\subsection{Homological Euler Characteristic with respect to $Sym^n(V)$}

With the calculations made so far one can finally obtain the homological Euler characteristic with respect to every irreducible finite dimensional representation of $\mathrm{Sp}_4$ by using ~\eqr{hecT}, and in particular with respect to the symmetric power representations $Sym^n(V)$. When $n$ is odd, it is clear that this Euler characteristic is $0$. In the next theorem we give an explicit formula of the Euler characteristic for $n$ even. 

\begin{thm}\label{Hom Symm}
For an even integer $n=2k$, one has the following description of the homological Euler characteristic $$\chi_h(\mathrm{Sp}_4(\mathbb{Z}), Sym^{n} V) = -\frac{1}{4320} n^3 - \frac{1}{720} n^2 + a_k n + b_k + c_k$$ where
$$c_k = 
\left\{ 
\begin{tabular}{rl}
$\frac{(-1)^{k/2}}{2},$ & $k$ even\\
$0,$ & otherwise\\
\end{tabular} \right. .$$
{ \begin{center}
and $a_k$, $b_k$ are determined by the congruence of $k$ modulo $30$, and are given by
%\begin{table}
\scriptsize\renewcommand{\arraystretch}{2}
\begin{longtable}{|c|c|c|c|c|c|c|c|c|c|c|c|c|c|c|c|c|c|c|c|c|c|c|c|c|c|c|c|c|c|c|} 
\hline
$k$ mod $30$ & 0& 1& 2& 3& 4& 5& 6& 7& 8& 9& 10& 11& 12& 13& 14 \\
\hline
$a_i$ & $-\frac{2}{15}$ & $-\frac{11}{120}$ & $-\frac{7}{90}$ & $-\frac{11}{120}$ & $-\frac{2}{15}$ & $-\frac{13}{360}$ & $-\frac{2}{15}$ & $-\frac{11}{120}$ & $-\frac{7}{90}$ & $-\frac{11}{120}$ & $-\frac{2}{15}$ & $-\frac{13}{360}$ & $-\frac{2}{15}$ & $-\frac{11}{120}$ & $-\frac{7}{90}$  \\
\hline
$b_i$ & $\frac{3}{2}$ & $-\frac{437}{540}$ & $-\frac{41}{270}$ & $-\frac{7}{20}$ & $-\frac{331}{270}$ & $\frac{79}{108}$ & $\frac{7}{10}$ & $-\frac{437}{540}$ & $-\frac{257}{270}$ & $\frac{9}{20}$ & $-\frac{23}{54}$ & $-\frac{37}{540}$ & $\frac{7}{10}$ & $-\frac{869}{540}$ & $-\frac{41}{270}$ 
\\
\hline
\hline
$k$ mod $30$ & 15& 16& 17& 18& 19& 20& 21& 22& 23& 24& 25& 26& 27& 28& 29 \\
\hline
$a_i$ & $-\frac{11}{120}$ & $-\frac{2}{15}$ & $-\frac{13}{360}$ & $-\frac{2}{15}$ & $-\frac{11}{120}$ & $-\frac{7}{90}$ & $-\frac{11}{120}$ & $-\frac{2}{15}$ & $-\frac{13}{360}$ & $-\frac{2}{15}$ & $-\frac{11}{120}$ & $-\frac{7}{90}$ & $-\frac{11}{120}$ & $-\frac{2}{15}$ & $-\frac{13}{360}$ \\
\hline
$b_i$ & $\frac{5}{4}$ & $-\frac{331}{270}$ & $-\frac{37}{540}$ & $-\frac{1}{10}$ & $-\frac{437}{540}$ & $\frac{35}{54}$ & $\frac{9}{20}$ & $-\frac{331}{270}$ & $-\frac{469}{540}$ & $\frac{7}{10}$ & $-\frac{1}{108}$ & $-\frac{41}{270}$ & $\frac{9}{20}$ & $-\frac{547}{270}$ & $-\frac{37}{540}$ 
\\
\hline
\caption{Coefficients for the homological Euler characteristics with respect to $Sym^n(V)$}\label{symeuler}
\end{longtable}
%\end{table}
\end{center}
}
\end{thm}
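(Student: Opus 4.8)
The plan is to evaluate the trace formula \eqref{eq:hecT} term by term, using the three inputs already assembled: the complete list of conjugacy classes of torsion elements (Table~\ref{torsionelements}), the orbifold Euler characteristics of their centralizers (Table~\ref{orbifoldeuler}), and the traces $Tr(T^{-1}, Sym^n V)$ computed class by class in Subsection~\ref{Traces Sym}. The theorem concerns even $n = 2k$, and in this range every factor $(-1)^n$ appearing in the trace of a ``negative'' torsion element equals $+1$; consequently $T$ and $-T$ contribute identically, which halves the bookkeeping.

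First I would separate off the contribution of $T_1 = id_4$ and $T_2 = -id_4$. Both have centralizer $\Sp(\Z)$ with $\chi = -\frac{1}{1440}$, and for even $n$ both have trace $\dim Sym^n V = \frac{1}{6}(n+1)(n+2)(n+3)$. Their combined contribution is therefore $-\frac{1}{4320}(n+1)(n+2)(n+3)$, whose expansion already yields the cubic and quadratic coefficients $-\frac{1}{4320}n^3 - \frac{1}{720}n^2$ exactly as claimed, together with a linear term $-\frac{11}{4320}n$ and a constant $-\frac{1}{720}$ to be absorbed later into $a_k$ and $b_k$. No other torsion element has a trace growing faster than linearly, so the cubic and quadratic parts of $\chi_h$ come entirely from $T_1$ and $T_2$.

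Next I would group the remaining classes according to the growth of their traces. The families with repeated cyclotomic factors $\Phi_1,\Phi_2,\Phi_3,\Phi_4,\Phi_6$ (cases B--J) have traces that are \emph{linear} in $n$ with coefficients periodic in $n \bmod 3$, $n \bmod 4$, or $n \bmod 6$; summing $\chi(C(T))\,Tr(T^{-1})$ over these produces the term $a_k n$ plus a periodic constant, where the periods $3$ and $4$ in $n$ descend to periods $3$ and $2$ in $k$ and combine to period $6$. The families of elements of orders $5,6,8,10,12$ (cases K--O) have \emph{bounded}, periodic traces and contribute only to the constant. The order-$5$ elements (case K) introduce a period $5$ in $k$, which together with the period $6$ above accounts for the period $30$ appearing in the table for $b_k$. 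The order-$8$ elements (case N) are the only source of a period $4$ in $k$; their total contribution is $4 \cdot \frac{1}{8} \cdot (-1)^{k/2} = \frac{1}{2}(-1)^{k/2}$ for even $k$ and $0$ for odd $k$, which is exactly $c_k$, and I would isolate it precisely so that $b_k$ may be tabulated modulo $30$ rather than modulo $60$.

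With this organization the theorem reduces to a finite verification: record the linear coefficient as a function of $k \bmod 6$ (re-expressed mod $30$ in the table) and the constant as a function of $k \bmod 30$, after adding in the leftover $-\frac{11}{4320}n$ and $-\frac{1}{720}$ from $T_1, T_2$. The only real obstacle is the sheer bookkeeping: weighting each of the fifteen class-families by its centralizer's Euler characteristic and its trace, and keeping track of how the periods $2,3,4,5,6,8,12$ in $n$ descend to periods in $k$ and interleave. There is no conceptual difficulty beyond this finite summation, and the tables for $a_k$ and $b_k$ simply record its output.
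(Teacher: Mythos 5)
Your proposal is correct and follows essentially the same route as the paper's own (much terser) proof: both evaluate the trace formula \eqref{eq:hecT} using the orbifold Euler characteristics of Table~\ref{orbifoldeuler} and the traces of Subsection~\ref{Traces Sym}, attribute the cubic and quadratic terms to the type~$A$ elements $T_1, T_2$, isolate the type~$N$ contribution as $c_k$, and observe that the remaining contributions are periodic in $k$ modulo $30$. Your accounting of how the periods in $n$ descend to periods in $k$ is in fact more explicit than what the paper records.
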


\begin{proof}
For the proof of this theorem, one has to use~\eqr{hecT}, the orbifold Euler characteristics determined in Section~\ref{Euler char} and the traces calculated in the Subsection~\ref{Traces Sym}. The orbifold Euler characteristics are rational constants and the traces with respect to the symmetric power representations are polynomials in $n = 2k$. Taking into account the labels of torsion elements in Table ~\ref{orbifoldeuler}, the only elements whose traces are polynomials of degree greater than $1$ are those of type $A$, they will therefore determine the coefficients of $n^3$ and $n^2$ in the formula for the Euler characteristics of the symmetric power representations. The term $c_k$ in the formula is the contribution of the elements of type $N$, the sum of the contribution of the rest of the torsion elements will only depend on $n$ modulo $60$ (or $k$ module $30$) and determines the coefficients $a_k$ and $b_k$ of the formula.
\end{proof}

Therefore we can see that the degree $3$ polynomial determining the homological Euler charactersitic of the $2k$-th symmetric power depends only on $k$ modulo $60$.

In the following table we make use of the results obtained to give the homological Euler characteristics for the first $150$ even symmetric powers. If one numerates the lines from $0$ to $4$ and the columns from $0$ to $15$, then the value of $\chi_h(\mathrm{Sp}_4(\mathbb{Z}), Sym^{2k} V)$ can be found in line $i$ and column $j$ where $i$ and $j$ are such that $k = 15*i + j$. \\

\begin{center}
\scriptsize\renewcommand{\arraystretch}{2}
\begin{longtable}{|c||c|c|c|c|c|c|c|c|c|c|c|c|c|c|c|c|c|c|c|c|c|c|c|c|c|c|c|c|c|c|c|} 
\hline
$i \backslash j$ & 0& 1& 2& 3& 4& 5& 6& 7& 8& 9& 10& 11& 12& 13& 14 \\
\hline
\hline
0 & 2& -1& -1& -1& -2& 0& -2& -3& -3& -3& -6& -4& -6& -9& -9\\
\hline
1& -9& -14& -12& -18& -19& -19& -23& -30& -28& -34& -37& -41& -45& -54& -52\\
\hline
2& -62& -67& -71& -79& -90& -88& -102& -109& -117& -125& -138& -140& -158& -167& -175\\
\hline
3& -187& -206& -208& -230& -241& -253& -269& -290& -296& -322& -335& -351& -371& -398& -404\\
\hline
4& -434& -453& -473& -497& -526& -536& -574& -595& -619& -647& -682& -696& -738& -765& -793\\
\hline
\caption{Homological Euler Characteristic $\chi_h(\Sp(\Z), Sym^{2k} V)$, for $k=15i + j$ }\label{homologicalsymeuler}
\end{longtable}
%\end{table}
\end{center}

\subsection{Homological Euler characteristic for general irreducible finite dimensional representation}

With the calculations we already made, we have all the traces of torsion elements and the orbifold Euler characteristics of their centralizers. Therefore one can already calculate the homological Euler characteristic. One can easily see in particular that for $m_1$ odd, one has $\chi_h(\Gamma, \m_\lambda) = 0$, so the interesting examples come from the cases $m_1$ even. 

By using the formula (\ref{eq:hecT}) for homological Euler characteristics 
\begin{equation}
    \chi_h(\Gamma, \m_\lambda) = \sum_{(T)} \chi(C_\Gamma(T)) Tr(T^{-1}, \m_\lambda), \nonumber
\end{equation}
one can group the terms in this sum taking into account the Table \ref{tracetorsionhighestweight} and check the coefficients multiplying each matrix by taking the sum of the orbifold Euler characteristics $\chi(C_\Gamma(T))$ for all the torsion elements associated to the given matrix in the table.
Therefore, if $m_1$ is even, one has:
\begin{align}\label{homEuler}
\chi_h(\Gamma,V) &= \chi_{A} M_A(n_1,  n_2) + \chi_{B, C} M_{B, C}(n_1,  n_2) + \chi_{D, E} M_{D, E}(n_1,  n_2) + \chi_{F} M_F(n_1,  n_2)\nonumber \\
& + \chi_{F_2} M_{F_2}(n_1,  n_2) + \chi_{G,H,I} M_{G,H,I}(n_1,  n_2) + \chi_{J} M_{J}(n_1,  n_2) + \chi_{K} M_{K}(n_1,  n_2) \nonumber \\
&+ \chi_{L, M} M_{L, M}(n_1,  n_2) + \chi_{M} M_{M}(n_1,  n_2) + \chi_{N} M_{N}(n_1,  n_2) + \chi_{0} M_{O}(n_1,  n_2) \nonumber
\end{align}

where $n_1 = m_1 + m_2$, $n_2 = m_2$ and $M_A(n_1,  n_2)$ denotes $\frac{1}{6}(n_1+2)(n_2+1)((n_1+2)^2-(n_2+1)^2)$ (and if $M$ is an $n \times n$ square matrix, we denote by $M(n_1,  n_2)$ the entry of the matrix $M$ in the row $n_1 = (m_1 + m_2)$ mod $n$ and column $n_2 = m_2$ mod $n$, where we are numbering the rows and columns from $0$ to $n-1$). On the other hand, for the coefficients one has
\begin{align}
\chi_A &= \chi(C(T_1)) + \chi(C(T_2)) = -\frac{1}{720}, \quad &\chi_J = \sum_{k=14}^{17} \chi(C(T_k)) = -\frac{1}{12}, \nonumber \\
\chi_{B, C} &= \chi(C(T_3)) + \chi(C(T_4)) = \frac{7}{144}, \quad &\chi_K = \sum_{k=18}^{21} \chi(C(T_k)) + \sum_{k=43}^{46} \chi(C(T_k))= \frac{4}{5}, \nonumber \\
\chi_{D, E} &= \sum_{k=5}^{7} \chi(C(T_k)) + \sum_{k=22}^{24} \chi(C(T_k)) = -\frac{1}{18}, \quad &\chi_{L, M} = \sum_{k=31}^{38} \chi(C(T_k)) = \frac{4}{9},\nonumber \\
\chi_{F} &= \sum_{k=8}^{9} \chi(C(T_k)) + \sum_{k=27}^{28} \chi(C(T_k)) = -\frac{1}{18}, \quad &\chi_{M} = \sum_{k=47}^{48} \chi(C(T_k)) = \frac{1}{6},\nonumber \\
\chi_{F_2} &= \sum_{k=25}^{26} \chi(C(T_k)) + \sum_{k=29}^{30} \chi(C(T_k)) = -\frac{1}{18}, \quad &\chi_{N} = \sum_{k=39}^{42} \chi(C(T_k)) = \frac{1}{2},\nonumber \\
\chi_{G,H,I} &= \sum_{k=10}^{13} \chi(C(T_k)) = -\frac{1}{24}, \quad &\chi_{O} = \sum_{k=49}^{56} \chi(C(T_k)) = \frac{1}{3}. \nonumber
\end{align}

We can make this expression even shorter by writting the term 
\begin{align}
&\chi_{D, E} M_{D, E}(n_1,  n_2) + \chi_{F} M_F(n_1,  n_2) + \chi_{F_2} M_{F_2}(n_1,  n_2) + \chi_{G,H,I} M_{G,H,I}(n_1,  n_2) + \chi_{J} M_{J}(n_1,  n_2) \nonumber \\
&+ \chi_{L, M} M_{L, M}(n_1,  n_2) + \chi_{M} M_{M}(n_1,  n_2) + \chi_{O} M_{O}(n_1,  n_2) \nonumber
\end{align}
in a $12$ by $6$ matrix $E$ given by:

\[
\left( 
\scriptsize\renewcommand{\arraystretch}{1.9}
\scalemath{1}{
\begin{array}{cccccccccccc}
-\frac{67n_1}{432}+\frac{n_2}{54}+\frac{47}{72} & -\frac{49n_1}{432}-\frac{n_2}{54}-\frac{185}{216} & -\frac{43n_1}{432}-\frac{43}{216} & -\frac{49n_1}{432}+\frac{n_2}{54}+\frac{29}{72} & -\frac{67n_1}{432}-\frac{n_2}{54}-\frac{275}{216} & -\frac{25n_1}{432}-\frac{25}{216}\\

-\frac{2n_1}{27}-\frac{n_2}{16}-\frac{91}{432} & -\frac{n_1}{54}-\frac{13n_2}{432}-\frac{53}{432} & -\frac{n_1}{54}+\frac{13n_2}{432}+\frac{7}{144} & -\frac{2n_1}{27}+\frac{n_2}{16}-\frac{37}{432} & -\frac{n_1}{54}-\frac{67n_2}{432}+\frac{325}{432} & -\frac{n_1}{54}+\frac{67n_2}{432}-\frac{119}{144}\\

\frac{n_1}{48}-\frac{2n_2}{27}+\frac{137}{216} & \frac{n_1}{16}+\frac{1}{8} & \frac{n_1}{48}+\frac{2n_2}{27}-\frac{119}{216} & \frac{n_1}{16}-\frac{2n_2}{27}+\frac{11}{216} & \frac{n_1}{48}+\frac{1}{24} & \frac{n_1}{16}+\frac{2n_2}{27}+\frac{43}{216}\\

\frac{n_1}{54}-\frac{31n_2}{432}-\frac{5}{16} & \frac{n_1}{54}+\frac{31n_2}{432}+\frac{167}{432} & \frac{2n_1}{27}+\frac{n_2}{48}+\frac{361}{432} & \frac{n_1}{54}-\frac{49n_2}{432}-\frac{11}{16} & \frac{n_1}{54}+\frac{49n_2}{432}+\frac{329}{432} & \frac{2n_1}{27}-\frac{n_2}{48}-\frac{233}{432}\\

-\frac{11n_1}{432}-\frac{11}{216} & \frac{31n_1}{432}+\frac{n_2}{54}-\frac{25}{216} & \frac{13n_1}{432}-\frac{n_2}{54}-\frac{1}{72} & \frac{7n_1}{432}+\frac{7}{216} & \frac{13n_1}{432}+\frac{n_2}{54}+\frac{29}{216} & \frac{31n_1}{432}-\frac{n_2}{54}+\frac{29}{72}\\

-\frac{11n_2}{432}-\frac{11}{432} & \frac{n_2}{16}+\frac{1}{16} & -\frac{43n_2}{432}-\frac{43}{432} & \frac{43n_2}{432}+\frac{43}{432} & -\frac{n_2}{16}-\frac{1}{16} & \frac{11n_2}{432}+\frac{11}{432}\\

-\frac{31n_1}{432}+\frac{n_2}{54}+\frac{11}{72} & -\frac{13n_1}{432}-\frac{n_2}{54}-\frac{5}{216} & -\frac{7n_1}{432}-\frac{7}{216} & -\frac{13n_1}{432}+\frac{n_2}{54}-\frac{7}{72} & -\frac{31n_1}{432}-\frac{n_2}{54}-\frac{95}{216} & \frac{11n_1}{432}+\frac{11}{216}\\

-\frac{2n_1}{27}+\frac{n_2}{48}-\frac{343}{432} & -\frac{n_1}{54}-\frac{49n_2}{432}+\frac{199}{432} & -\frac{n_1}{54}+\frac{49n_2}{432}-\frac{77}{144} & -\frac{2n_1}{27}-\frac{n_2}{48}+\frac{215}{432} & -\frac{n_1}{54}-\frac{31n_2}{432}+\frac{73}{432} & -\frac{n_1}{54}+\frac{31n_2}{432}-\frac{35}{144}\\

-\frac{n_1}{16}-\frac{2n_2}{27}-\frac{43}{216} & -\frac{n_1}{48}-\frac{1}{24} & -\frac{n_1}{16}+\frac{2n_2}{27}-\frac{11}{216} & -\frac{n_1}{48}-\frac{2n_2}{27}-\frac{169}{216} & -\frac{n_1}{16}-\frac{1}{8} & -\frac{n_1}{48}+\frac{2n_2}{27}+\frac{151}{216}\\

\frac{n_1}{54}-\frac{67n_2}{432}-\frac{17}{16} & \frac{n_1}{54}+\frac{67n_2}{432}+\frac{491}{432} & \frac{2n_1}{27}-\frac{n_2}{16}+\frac{37}{432} & \frac{n_1}{54}-\frac{13n_2}{432}+\frac{1}{16} & \frac{n_1}{54}+\frac{13n_2}{432}+\frac{5}{432} & \frac{2n_1}{27}+\frac{n_2}{16}+\frac{91}{432}\\

\frac{25n_1}{432}+\frac{25}{216} & \frac{67n_1}{432}+\frac{n_2}{54}-\frac{133}{216} & \frac{49n_1}{432}-\frac{n_2}{54}+\frac{59}{72} & \frac{43n_1}{432}+\frac{43}{216} & \frac{49n_1}{432}+\frac{n_2}{54}-\frac{79}{216} & \frac{67n_1}{432}-\frac{n_2}{54}+\frac{89}{72}\\

\frac{25n_2}{432}+\frac{25}{432} & -\frac{n_2}{48}-\frac{1}{48} & -\frac{7n_2}{432}-\frac{7}{432} & \frac{7n_2}{432}+\frac{7}{432} & \frac{n_2}{48}+\frac{1}{48} & -\frac{25n_2}{432}-\frac{25}{432}\\
 \end{array}  
 }
 \right) 
\]

We also introduce the notation 
\begin{align}
\tilde{\chi}_A(n_1,  n_2) &= \chi_A M_A(n_1,  n_2), &\tilde{\chi}_{B, C}(n_1,  n_2) &= \chi_{B, C} M_{B, C}(n_1,  n_2), \nonumber \\
\tilde{\chi}_K(n_1,  n_2) &= \chi_K M_K(n_1,  n_2), &\tilde{\chi}_N(n_1,  n_2) &= \chi_N M_N(n_1,  n_2). \nonumber 
\end{align}
We finally obtain the following result:

\begin{thm} \label{Homological formula}
We use the following notation: $E(m_2, \frac{m_1}{2})$ is the entry of $E$ in the row $m_2$ mod $12$ and column $\frac{m_1}{2}$ mod $6$, where we are numbering the rows from $0$ to $11$ and the columns from $0$ to $5$. Using the above notation, we can express the Euler characteristic of $\Sp(\Z)$ with coefficients in the highest weight representation $\m_\lambda$ with $\lambda=m_1\lambda_1 + m_2\lambda_2$ as
\[\chi_h(\Sp(\Z),\m_\lambda)=
\tilde{\chi}_A(n_1,  n_2) + \tilde{\chi}_{B,C}(n_1,  n_2) + \tilde{\chi}_K(n_1,  n_2) + \tilde{\chi}_N(n_1,  n_2) + E(m_2, \frac{m_1}{2}),
\]
when $m_1$ is even (and as we mentioned before $\chi_h(\Sp(\Z),\m_\lambda)=0$ otherwise). In this expression $n_1$ is, as usual, $m_1 + m_2$ and $n_2 = m_2$.
\end{thm}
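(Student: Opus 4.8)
The plan is to apply the Euler characteristic formula \eqref{eq:hecT} directly, feeding in the orbifold Euler characteristics of centralizers from Table~\ref{orbifoldeuler} and the traces from Table~\ref{tracetorsionhighestweight}, and then reorganizing the resulting finite sum. First I would dispose of the case $m_1$ odd: the element $-id_4$ is central in $\Sp(\Z)$ and acts on $\m_\lambda$ by the scalar $(-1)^{m_1}$, so for $m_1$ odd the sheaf $\tm_\lambda$ vanishes and hence $\chi_h(\Sp(\Z),\m_\lambda)=0$. This justifies the ``otherwise'' clause and lets me assume $m_1$ even for the remainder, where crucially $(-1)^{m_1}=1$.

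For $m_1$ even, I would group the conjugacy classes of torsion elements according to the trace matrix attached to them in Table~\ref{tracetorsionhighestweight}. Because $(-1)^{m_1}=1$, every $\pm$ pair of rows in that table collapses to a single trace matrix; for instance $T_5,T_6,T_7$ and $T_{22},T_{23},T_{24}$ all contribute $M_{D,E}(n_1,n_2)$. Summing the orbifold Euler characteristics $\chi(C(T))$ over each group produces the coefficients $\chi_A,\chi_{B,C},\chi_{D,E},\chi_F,\chi_{F_2},\chi_{G,H,I},\chi_J,\chi_K,\chi_{L,M},\chi_M,\chi_N,\chi_O$ recorded just above the theorem, and \eqref{eq:hecT} becomes the single expression
\[
\chi_h(\Sp(\Z),\m_\lambda)=\sum_X \chi_X\,M_X(n_1,n_2),
\]
the sum running over the twelve matrix types, with $n_1=m_1+m_2$ and $n_2=m_2$.

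The final step is to bundle most of these terms into the matrix $E$. I would observe that the eight matrices $M_{D,E},M_F,M_{F_2},M_{G,H,I},M_J,M_{L,M},M_M,M_O$ share two features: their entries are affine in $(n_1,n_2)$ (of degree $0$ or $1$), and their periods $3,3,6,4,4,6,12,12$ all divide $12$. Hence their $\chi_X$-weighted sum, evaluated at $(n_1,n_2)=(m_1+m_2,m_2)$, is an affine function of $(n_1,n_2)$ depending only on $m_1\bmod 12$ and $m_2\bmod 12$; restricting to $m_1$ even replaces $m_1\bmod 12$ by $m_1/2\bmod 6$, which is exactly the $12\times 6$ array $E$ with rows indexed by $m_2\bmod 12$ and columns by $m_1/2\bmod 6$. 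The four remaining terms cannot be absorbed for structural reasons: $M_A$ has a cubic entry, $M_{B,C}$ a quadratic entry, $M_K$ has period $5$, and $M_N$ has period $8$ --- none compatible with an affine $12\times 6$ table --- so they are kept separate as $\tilde{\chi}_A,\tilde{\chi}_{B,C},\tilde{\chi}_K,\tilde{\chi}_N$. This yields the claimed identity.

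The conceptual work is entirely contained in the additivity formula \eqref{eq:hecT} together with the degree-and-period analysis that dictates which terms fold into $E$; the genuine labor, and the main source of possible error, is the explicit arithmetic producing the $72$ entries of $E$, where for each residue class $(m_2\bmod 12,\,m_1/2\bmod 6)$ one must correctly sum the eight $\chi_X$-weighted affine matrix entries. This is routine but delicate bookkeeping rather than a conceptual obstacle.
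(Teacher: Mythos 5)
Your proposal is correct and follows essentially the same route as the paper: the paper likewise applies the torsion-element formula \eqref{eq:hecT}, groups the conjugacy classes by their common trace matrix from Table~\ref{tracetorsionhighestweight} (with the $(-1)^{m_1}$ signs trivial for $m_1$ even), sums the orbifold Euler characteristics to obtain the coefficients $\chi_A,\dots,\chi_O$, and then folds the eight affine, $12$-periodic contributions into the $12\times 6$ table $E$ while keeping $\tilde{\chi}_A,\tilde{\chi}_{B,C},\tilde{\chi}_K,\tilde{\chi}_N$ separate. Your added justification of which terms can be absorbed into $E$ (degree and period dividing $12$) is a correct elaboration of what the paper does implicitly.
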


To illustrate the simplicity of this formula we will calculate the homological Euler characteristic for the highest weight $(m_1, m_2) = (20,19)$. In this case one has $n_1 = m_1 + m_2 = 39$ and $n_2 = m_2 = 19$.
\begin{itemize}
\item $m_2 = 19 \equiv 7$ mod $12$ and $\frac{m_1}{2} = 10 \equiv 4$ mod $6$, therefore $E(m_2, \frac{m_1}{2}) = -\frac{n_1}{54} - \frac{31n_2}{432}+\frac{73}{432}$.
\item $\tilde{\chi}_A(n_1,  n_2) = -\frac{1}{720}\frac{1}{6}(n_1+2)(n_2+1)((n_1+2)^2-(n_2+1)^2)$.
\item $n_1 \equiv n_2 \equiv 1$ mod $2$, therefore $\tilde{\chi}_{B, C}(n_1,  n_2) = -\frac{7}{144} \frac{(n_1+2)(n_2+1)}{2}$.
\item $n_1 \equiv n_2 \equiv 4$ mod $5$, therefore $\tilde{\chi}_K(n_1,  n_2) = 0$.
\item $n_1 \equiv 7$ mod $8$ and $n_2 \equiv 3$ mod $8$, therefore $\tilde{\chi}_N(n_1,  n_2) = 0$.
\end{itemize}
And finally, for $\lambda = 20\lambda_1 + 19\lambda_2$ one has
\[
\chi_h(\Sp(\Z),\m_\lambda) = -\frac{39}{54}-\frac{31 \cdot 19}{432}+\frac{73}{432} -\frac{1}{720}\frac{1}{6}(41\cdot 20)(41^2-20^2)-\frac{7}{144} \frac{41\cdot 20}{2} = -265.
\]

The following table gives the homological Euler characteristics with respect to the representations $\m_\lambda$ for $m_1$ even, $m_1 < 15$ and $m_2 < 15$, but the reader could use Theorem \ref{Homological formula} to calculate the homological Euler characteristic for any other highest weight.

\begin{center}
\scriptsize\renewcommand{\arraystretch}{2}
\begin{longtable}{|c||c|c|c|c|c|c|c|c|c|c|c|c|c|c|c|c|c|c|c|c|c|c|c|c|c|c|c|c|c|c|c|} 
\hline
$m_1 \backslash m_2$ & 0& 1& 2& 3& 4& 5& 6& 7& 8& 9& 10& 11& 12& 13& 14 \\
\hline
\hline
0 & 2 & -1 & 0 & -1 & 1 & -1 & 1 & -4 & 1 & -6 & 4 & -4 & 2 & -9 & 4\\
\hline
2 & -1 & 0 & 1 & -1 & 1 & -1 & 0 & -3 & 0 & -2 & 2 & -8 & -1 & -12 & 1\\
\hline
4 & -1 & 0 & 0 & 0 & 0 & -3 & 0 & -5 & -1 & -7 & 1 & -12 & -5 & -16 & -4\\
\hline
6 & -1 & -1 & 1 & -3 & 0 & -5 & 0 & -8 & -4 & -12 & -3 & -16 & -9 & -25 & -11\\
\hline
8 & -2 & -2 & 0 & -1 & 0 & -7 & -5 & -11 & -5 & -16 & -7 & -24 & -18 & -34 & -20\\
\hline
10 & 0 & -2 & -1 & -4 & -1 & -7 & -5 & -15 & -8 & -19 & -10 & -32 & -24 & -42 & -29\\
\hline
12 & -2 & -3 & 0 & -7 & -5 & -11 & -9 & -20 & -17 & -30 & -20 & -40 & -34 & -59 & -44\\
\hline
14 & -3 & -4 & -1 & -7 & -5 & -15 & -12 & -25 & -20 & -34 & -26 & -52 & -47 & -72 & -57\\
\hline
\caption{Values of homological Euler Characteristics $\chi_h(\Sp(\Z), \m_\lambda)$}\label{homologicaleulermlambda}
\end{longtable}
%\end{table}
\end{center}

\section{Cuspidal Cohomology} \label{Dim cuspidal}

In this section we give dimension formulas for the space of cuspidal cohomology. We will make use of Proposition 1 of \cite{MokTil2002}, which implies that, when we extend scalars to $\m_\lambda \otimes \C$ the cuspidal and inner cohomology coincides and is concentrated in degree $3$ (therefore the dimension of cuspidal cohomology is the same as the dimension, over $\Q$, of the inner cohomology). We denote by $h_!^q(\lambda)$ and $h_{Eis}^q(\lambda)$ the dimensions of the inner and the Eisenstein cohomology in degree $q$ of $\rS_\Gamma$ with coefficients in $\tm_\lambda$, respectively. Therefore one has
\[
dim(H^q(\rS_\Gamma, \tm_\lambda)) = h_!^q(\lambda) + h_{Eis}^q(\lambda).
\] 

We denote
\[
\chi_{Eis}(\Gamma, \m_\lambda) =  \sum_{k = 0}^5 (-1)^k h_{Eis}^k(\lambda).
\]

The following lemma will be useful in our analysis.

\begin{lema} \label{CuspDim}
One has the following dimension formula for the space of cuspidal cohomology:
\begin{align}
dim(H_{cusp}^\bullet(\rS_\Gamma, \tm_\lambda \otimes \C)) &= dim(H_{cusp}^3(\rS_\Gamma, \tm_\lambda \otimes \C)) = h_!^3(\lambda)\nonumber \\
&= \chi_{Eis}(\Gamma, \m_\lambda) - \chi_{h}(\Gamma, \m_\lambda)\nonumber 
\end{align}
\end{lema}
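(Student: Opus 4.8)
The plan is to combine the dimension-wise decomposition of the cohomology of $\rS_\Gamma$ recorded just above with the concentration of the inner cohomology in a single degree, and then to read off the asserted identity from the definitions of the two Euler characteristics involved. The argument is essentially a bookkeeping computation; no new geometric input beyond what has already been cited is needed.

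First I would recall that $H^\bullet(\Gamma, \m_\lambda) \cong H^\bullet(\rS_\Gamma, \tm_\lambda)$ and that this cohomology vanishes in all but finitely many degrees, the virtual cohomological dimension of $\Sp(\Z)$ being finite, so that the homological Euler characteristic is the genuine finite alternating sum $\chi_h(\Gamma, \m_\lambda) = \sum_{q} (-1)^q \dim H^q(\rS_\Gamma, \tm_\lambda)$. Substituting the dimension decomposition $\dim H^q(\rS_\Gamma, \tm_\lambda) = h_!^q(\lambda) + h_{Eis}^q(\lambda)$ into this sum and separating the two contributions gives $\chi_h(\Gamma, \m_\lambda) = \sum_q (-1)^q h_!^q(\lambda) + \chi_{Eis}(\Gamma, \m_\lambda)$, where the second summand is exactly $\chi_{Eis}(\Gamma, \m_\lambda)$ by its definition $\chi_{Eis}(\Gamma, \m_\lambda) = \sum_{k=0}^5 (-1)^k h_{Eis}^k(\lambda)$.

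The key input is then Proposition 1 of \cite{MokTil2002}: after extending scalars to $\m_\lambda \otimes \C$, the inner cohomology agrees with the cuspidal cohomology and is concentrated in degree $3$. Since dimensions over $\Q$ and over $\C$ coincide, this forces $h_!^q(\lambda) = 0$ for $q \neq 3$, so the alternating sum collapses to $\sum_q (-1)^q h_!^q(\lambda) = -h_!^3(\lambda)$. Plugging this back in yields $\chi_h(\Gamma, \m_\lambda) = -h_!^3(\lambda) + \chi_{Eis}(\Gamma, \m_\lambda)$, and rearranging produces $h_!^3(\lambda) = \chi_{Eis}(\Gamma, \m_\lambda) - \chi_h(\Gamma, \m_\lambda)$. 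Finally, the same concentration statement gives the identifications $\dim H_{cusp}^\bullet(\rS_\Gamma, \tm_\lambda \otimes \C) = \dim H_{cusp}^3(\rS_\Gamma, \tm_\lambda \otimes \C) = h_!^3(\lambda)$, closing the chain of equalities claimed in the lemma.

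I do not anticipate a serious obstacle: once the two structural facts are in place the statement is a formal consequence. The only point demanding genuine care is the appeal to \cite{MokTil2002}, which simultaneously supplies both that the inner and cuspidal cohomology coincide after $\otimes\,\C$ and that they are supported in degree $3$ alone; every remaining step is just the definition of $\chi_h$, the definition of $\chi_{Eis}$, and additivity of dimension in $\dim H^q = h_!^q + h_{Eis}^q$.
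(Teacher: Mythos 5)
Your argument is correct and is exactly the one the paper intends: the paper's proof consists of the single line ``This follows from the aforementioned proposition in \cite{MokTil2002},'' relying on the decomposition $\dim H^q = h_!^q(\lambda)+h_{Eis}^q(\lambda)$ and the concentration of inner/cuspidal cohomology in degree $3$ stated just before the lemma. You have merely written out the alternating-sum bookkeeping that the authors leave implicit, so there is nothing to change.
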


\begin{proof}
This follows from the aforementioned proposition in \cite{MokTil2002}.
\end{proof}

We will give an explicit description of $\chi_{Eis}(\Gamma, \m_\lambda)$ by using Section ~\ref{Eisenstein} and the fact that for an even integer $k \geq 4$,

$$\mbox{dim }\mathcal{S}_k  = 
\left\{ 
\begin{tabular}{rl}
$\lfloor \frac{k}{12} \rfloor - 1,$ & if $k \equiv 2 \mod 12$\\
$\lfloor \frac{k}{12} \rfloor,$ & otherwise\\
\end{tabular}\,.
\right.$$

\begin{prop} 
If $\lambda = m_1 \lambda_1 + m_2 \lambda_2$ then $\chi_{Eis}(\Gamma, \m_\lambda)  = 0$ for $m_1$ odd. If $m_1$ is even one has
$$\chi_{Eis}(\Gamma, \m_\lambda)  = 
\left\{ 
\begin{tabular}{rl}
$2,$ & if $m_1 = m_2 =0$ \\
dim $\mathcal{S}_{m_1 + 2} - 1 -$ dim $\mathcal{S}_{m_1+4},$ & if $m_1 > 0$ and $m_2 =0$\\
$2$dim $\mathcal{S}_{m_2 + 2} + 2 (\# \mathcal{Z}_{2m_2 + 4}) -$ dim $\mathcal{S}_{2m_2 + 4},$ & if $m_1 = 0$ and $m_2 > 0$ even\\
$1 + 2$dim $\mathcal{S}_{m_2 + 2} +$ dim $\mathcal{S}_{m_1 + 2} -$ dim $\mathcal{S}_{m_1+ 2m_2 + 4},$ & if $m_1 > 0$ and $m_2 > 0$ even\\
$- 1 - 2$dim $\mathcal{S}_{m_2 + 3} -$ dim $\mathcal{S}_{2m_2 + 4},$ & if $m_1 = 0$ and $m_2$ odd\\
dim $\mathcal{S}_{m_1 + 2} - 2$dim $\mathcal{S}_{m_1+ m_2 + 3} -$ dim $\mathcal{S}_{m_1+ 2m_2 + 4},$ & if $m_1 > 0$ and $m_2$ odd\\
\end{tabular}
\right. ,$$
where $\# \mathcal{Z}_{k}$ denotes the cardinality of $\mathcal{Z}_{k}$ (see Section ~\ref{Eisenstein}).
\end{prop}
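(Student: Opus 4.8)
The plan is to compute $\chi_{Eis}(\Gamma,\m_\lambda)=\sum_{k=0}^{5}(-1)^k h_{Eis}^k(\lambda)$ directly by reading off, degree by degree, the description of the Eisenstein cohomology $H_{Eis}^q(\rS_\Gamma,\tm_\lambda\otimes\C)$ obtained in Section~\ref{Eisenstein} and taking dimensions. First I would dispose of the case $m_1$ odd: as observed in Subsection~\ref{boundary}, the central element $-id_4\in\Sp(\Z)$ forces $\tm_\lambda=0$ whenever $m_1$ is odd, so every cohomology space vanishes and $\chi_{Eis}(\Gamma,\m_\lambda)=0$. This leaves the six cases listed in the statement, each of which is already worked out explicitly in Section~\ref{Eisenstein}.

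For each of these six cases I would substitute the listed spaces into the alternating sum, using two elementary dimension facts throughout: that $\dim_\C\overline{\mathcal{S}}_k=\dim_\C\mathcal{S}_k$, so that every summand $\mathcal{S}_k\oplus\overline{\mathcal{S}}_k$ contributes $2\dim\mathcal{S}_k$, and that in the case $m_1=0$ with $m_2>0$ even the degree-$2$ and degree-$3$ contributions come from the complementary subsets $\mathcal{Z}_{2m_2+4}$ and its complement in $\Sigma_{2m_2+4}$, so they contribute $\#\mathcal{Z}_{2m_2+4}$ and $\dim\mathcal{S}_{2m_2+4}-\#\mathcal{Z}_{2m_2+4}$ respectively. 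For instance, in the case $m_1>0$ even, $m_2>0$ even, Section~\ref{Eisenstein} gives $H_{Eis}^3=\mathcal{S}_{m_1+2m_2+4}$ and $H_{Eis}^4=\Q\oplus\mathcal{S}_{m_2+2}\oplus\overline{\mathcal{S}}_{m_2+2}\oplus\mathcal{S}_{m_1+2}$ with all other degrees zero, so the alternating sum equals $-\dim\mathcal{S}_{m_1+2m_2+4}+1+2\dim\mathcal{S}_{m_2+2}+\dim\mathcal{S}_{m_1+2}$, which is exactly the stated value. The remaining five cases are handled identically, each reducing to a short signed sum of the quantities $\dim\mathcal{S}_k$, $\#\mathcal{Z}_k$, and the one-dimensional $\Q$-summands, and the displayed formula for $\dim\mathcal{S}_k$ is invoked only if one wishes to reduce everything to explicit integers.

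There is essentially no obstacle here beyond careful bookkeeping: the substantive input, namely the precise determination of the Eisenstein cohomology in each degree resting on Harder's results in~\cite{Harder2012}, has already been recorded in Section~\ref{Eisenstein}. The only point requiring a little attention is the $\mathcal{Z}_{2m_2+4}$-splitting, where one must correctly match the degree in which each Hecke eigenform contributes (governed by the (non)vanishing of the central $L$-value) against the sign $(-1)^k$; it is this pairing that produces the $+2\#\mathcal{Z}_{2m_2+4}$ term rather than a cancellation. Once the six alternating sums are assembled and simplified, they coincide termwise with the six expressions in the statement, completing the proof.
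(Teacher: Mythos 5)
Your proposal is correct and follows exactly the route the paper takes: the proposition is obtained by reading off the degree-by-degree description of $H_{Eis}^q(\rS_\Gamma,\tm_\lambda\otimes\C)$ recorded in Section~\ref{Eisenstein} and forming the alternating sum, with $\dim\overline{\mathcal{S}}_k=\dim\mathcal{S}_k$ and the splitting of $\Sigma_{2m_2+4}$ into $\mathcal{Z}_{2m_2+4}$ and its complement (contributing in degrees $2$ and $3$ respectively) accounting for the $+2\#\mathcal{Z}_{2m_2+4}-\dim\mathcal{S}_{2m_2+4}$ term, and the vanishing of $\tm_\lambda$ for $m_1$ odd disposing of that case. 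All six alternating sums check out against the stated formula, so there is nothing to add.
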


It is therefore clear that $\chi_{Eis}(\Gamma, \m_\lambda)$ as a function of $\lfloor\frac{m_1}{12}\rfloor$ and $\lfloor\frac{m_2}{12}\rfloor$ only depends on the congruences of $m_1$ and $m_2$ modulo $12$. As a result we can give an explicit formula for $\chi_{Eis}(\Gamma, \m_\lambda)$. We introduce the matrix

\[
F_1=\left( 
\scalemath{0.7}{
\begin{array}{cccccccccccc} 
-2 & -1 & -1 & -1 & -2 & 0\\
-(2k_1+4k_2+1) & -(2k_1+4k_2) & -(2k_1+4k_2) & -(2k_1+4k_2+1) & -(2k_1+4k_2+2) & -(2k_1+4k_2)\\
0 & 1 & 0 & 1 & 0 & 1\\
-(2k_1+4k_2+1) & -(2k_1+4k_2+1) & -(2k_1+4k_2) & -(2k_1+4k_2+3) & -(2k_1+4k_2+1) & -(2k_1+4k_2+2)\\
-1 & 1 & 0 & 0 & 0 & 1\\
-(2k_1+4k_2+1) & -(2k_1+4k_2+1) & -(2k_1+4k_2+3) & -(2k_1+4k_2+1) & -(2k_1+4k_2+3) & -(2k_1+4k_2+3)\\
-1 & 0 & 0 & 0 & -1 & 1\\
-(2k_1+4k_2+2) & -(2k_1+4k_2+3) & -(2k_1+4k_2+1) & -(2k_1+4k_2+4) & -(2k_1+4k_2+3) & -(2k_1+4k_2+3)\\
-1 & 0 & -1 & 0 & -1 & 0\\
-(2k_1+4k_2+4) & -(2k_1+4k_2+2) & -(2k_1+4k_2+3) & -(2k_1+4k_2+4) & -(2k_1+4k_2+4) & -(2k_1+4k_2+3)\\
0 & 2 & 1 & 1 & 1 & 2\\
-(2k_1+4k_2+2) & -(2k_1+4k_2+4) & -(2k_1+4k_2+4) & -(2k_1+4k_2+4) & -(2k_1+4k_2+4) & -(2k_1+4k_2+6)\\
\end{array} 
}
\right),
\]
the vector 
\[ F_2 = \left( 
\scalemath{0.8}{
\begin{array}{cccccccccccc} 
-2 & -1 & -1 & -1 & -2 & 0
\end{array} 
}
\right),
\]
and the vector 
\[
F_3 = \left( 
\scalemath{0.8}{
\begin{array}{cccccccccccc} 
z-2, & -1-4k_2, & z, & -1-4k_2, & z-1, & -1-4k_2, & z-1, & -2-4k_2, & z-1, & -4-4k_2, & z, & -2-4k_2  \end{array} 
}
\right),
\]
where in $F_3$ the term $z$ denotes $2(\# \mathcal{Z}_{2m_2+4})$.
Then one can see the following:
\begin{prop} \label{EisensteinEulerCharacteristic}
If $m_1 = 12k_1+l_1$ and $m_2 = 12 k_2 + l_2$, with $k_1, k_2, l_1, l_2 \in \Z$ and $0 \leq l_1, l_2 < 12$ then
$$\chi_{Eis}(\Gamma, \m_\lambda) = 
\left\{ 
\begin{tabular}{rl}
$F_1(l_2, \frac{l_1}{2}),$ & $m_1 > 0 \mbox{ and } m_2 > 0$\\
$F_2(\frac{l_1}{2}),$ & $m_1 > 0 \mbox{ and } m_2 = 0$\\
$F_3(l_2),$ & $m_1 = 0 \mbox{ and } m_2 > 0$ \\
\end{tabular} \right. .$$
(where we are numbering the rows of $F_1$ and the entries of $F_3$ from $0$ to $11$ and the columns of $F_1$ and the entries of $F_2$ are numbered from $0$ to $5$).
\end{prop}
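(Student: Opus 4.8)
The plan is to derive the proposition directly from the preceding case-by-case expression for $\chi_{Eis}(\Gamma,\m_\lambda)$ together with the stated dimension formula for the spaces $\mathcal{S}_k$ of cusp forms. Since that expression already splits $\chi_{Eis}$ into the six parity cases, and since for $m_1$ odd it vanishes, the only work left is to convert each $\dim\mathcal{S}_k$ appearing there (for the relevant weights $k\in\{m_1+2,\,m_1+4,\,m_2+2,\,m_2+3,\,2m_2+4,\,m_1+m_2+3,\,m_1+2m_2+4\}$) into an explicit function of the residues of $m_1,m_2$ modulo $12$. Writing the dimension formula as $\dim\mathcal{S}_k=\lfloor k/12\rfloor-\varepsilon_k$, where $\varepsilon_k=1$ if $k\equiv 2\ (\mathrm{mod}\ 12)$ and $\varepsilon_k=0$ otherwise, reduces everything to evaluating floor functions and a correction term.

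First I would set $m_1=12k_1+l_1$ and $m_2=12k_2+l_2$ with $0\le l_1,l_2<12$ and use the additivity $\lfloor (12k+l+c)/12\rfloor=k+\lfloor (l+c)/12\rfloor$. This isolates, for each weight $k$ above, an integral linear part in $k_1,k_2$ and a remainder depending only on $(l_1,l_2)$ and on the exceptional residue $\equiv 2\ (\mathrm{mod}\ 12)$. The decisive observation is that the linear parts in $k_1,k_2$ assemble coherently: for $m_2$ even they cancel (the contribution $-\dim\mathcal{S}_{m_1+2m_2+4}$ of leading order $-(k_1+2k_2)$ is balanced by $\dim\mathcal{S}_{m_1+2}$ and $2\dim\mathcal{S}_{m_2+2}$), so $\chi_{Eis}$ is a pure constant in those rows, whereas for $m_2$ odd they combine into $-(2k_1+4k_2)$, producing exactly the $-(2k_1+4k_2+\cdots)$ entries. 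I would record these linear parts once and then concentrate on the residue-dependent constants.

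Next I would tabulate, for every even $l_1\in\{0,2,\dots,10\}$ and every $l_2\in\{0,1,\dots,11\}$, the value of the residue remainder for each of the seven weights, including the $\varepsilon_k$ correction, and sum them according to the preceding proposition. Collecting the outcomes gives the entries of $F_1$ (rows indexed by $l_2$, columns by $l_1/2$); the boundary cases $m_2=0$ and $m_1=0$ are handled by the same substitution and yield $F_2$ and $F_3$ respectively, while $m_1=m_2=0$ is read off directly as the value $2$. In the $m_1=0$ even subcase the term $2\#\mathcal{Z}_{2m_2+4}$ is genuinely not a function of congruences, so it is carried along symbolically as $z$ in the even entries of $F_3$; everything else in that vector again reduces to floors.

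The main obstacle is organizational rather than conceptual: one must verify the residue computation in all $6\times 12$ cells of $F_1$ (and the shorter lists $F_2,F_3$) and, in particular, keep careful track of the exceptional weights $\equiv 2\ (\mathrm{mod}\ 12)$, since these are precisely what breaks the otherwise periodic pattern and accounts for the irregular offsets $-1,-2,-3,-4$ in the entries. I expect no surprises beyond this bookkeeping; the cancellation of the $k_1,k_2$-linear parts for $m_2$ even and their reduction to $-(2k_1+4k_2)$ for $m_2$ odd is the structural fact that makes the final tables finite.
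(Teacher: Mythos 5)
Your proposal is correct and is essentially the paper's own (implicit) argument: the paper offers no proof beyond ``one can see,'' and the only route is exactly the substitution $m_1=12k_1+l_1$, $m_2=12k_2+l_2$ into the preceding case-by-case formula, using $\dim\mathcal{S}_k=\lfloor k/12\rfloor-\varepsilon_k$ and tabulating the residue contributions. Your structural observations --- cancellation of the $k_1,k_2$-linear parts for $m_2$ even, reduction to $-(2k_1+4k_2)$ for $m_2$ odd, and the role of the exceptional residue $k\equiv 2\pmod{12}$ --- are all accurate.
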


From Lemma~\ref{CuspDim} and Theorem ~\ref{Homological formula} we can deduce:

\begin{thm} \label{Cuspidal Dim}
If $\lambda = m_1 \lambda_1 + m_2\lambda_2$, with $m_1$ even, then the dimension $dim(H_{cusp}^\bullet(\rS_\Gamma, \tm_\lambda \otimes \C))$ of the cuspidal cohomology is given by
$$-\left(\tilde{\chi}_A(n_1,  n_2) + \tilde{\chi}_{B,C}(n_1,  n_2) + \tilde{\chi}_K(n_1,  n_2) + \tilde{\chi}_N(n_1,  n_2) + E(m_2, \frac{m_1}{2})\right) + 
\left\{ 
\begin{tabular}{rl}
$2,$ & $m_1 = m_2 = 0$\\
$F_1(l_2, \frac{l_1}{2}),$ & $m_1, m_2 > 0$\\
$F_2(\frac{l_1}{2}),$ & $m_1 > m_2 = 0$\\
$F_3(l_2),$ & $m_2 > m_1 = 0$ \\
\end{tabular} \right. .$$
\end{thm}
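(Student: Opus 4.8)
The plan is to obtain the formula by a direct substitution, combining the three ingredients already established in the preceding sections: Lemma~\ref{CuspDim}, Theorem~\ref{Homological formula}, and Proposition~\ref{EisensteinEulerCharacteristic}. There is no new geometric or representation-theoretic input required at this stage; all of it has been done earlier.

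First I would invoke Lemma~\ref{CuspDim}, which itself rests on Proposition~1 of \cite{MokTil2002}: after extending scalars the inner and cuspidal cohomology agree and are concentrated in degree $3$. This yields
$$
dim(H_{cusp}^\bullet(\rS_\Gamma, \tm_\lambda \otimes \C)) = h_!^3(\lambda) = \chi_{Eis}(\Gamma, \m_\lambda) - \chi_h(\Gamma, \m_\lambda),
$$
where the sign comes from $\chi_! = \sum_k (-1)^k h_!^k = -h_!^3$ together with $\chi_h = \chi_! + \chi_{Eis}$. Thus the whole statement reduces to substituting the explicit formulas for $\chi_h$ and $\chi_{Eis}$ and reading off the result.

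Next, since $m_1$ is even by hypothesis, Theorem~\ref{Homological formula} applies and gives
$$
\chi_h(\Sp(\Z), \m_\lambda) = \tilde{\chi}_A(n_1, n_2) + \tilde{\chi}_{B,C}(n_1, n_2) + \tilde{\chi}_K(n_1, n_2) + \tilde{\chi}_N(n_1, n_2) + E(m_2, \tfrac{m_1}{2}).
$$
Subtracting this term, as dictated by the identity above, produces exactly the bracketed expression carrying the global minus sign in the statement. Then I would plug in $\chi_{Eis}$: for the three regimes $m_1, m_2 > 0$, $\ m_1 > m_2 = 0$, and $\ m_2 > m_1 = 0$, Proposition~\ref{EisensteinEulerCharacteristic} supplies $F_1(l_2, \frac{l_1}{2})$, $F_2(\frac{l_1}{2})$, and $F_3(l_2)$ respectively, while for the trivial coefficient system $m_1 = m_2 = 0$ the description of the Eisenstein cohomology in Section~\ref{Eisenstein} gives $\chi_{Eis} = 2$ (one-dimensional in degrees $0$ and $2$, zero elsewhere). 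Assembling these four cases yields precisely the case distinction in the statement.

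The main obstacle here is purely bookkeeping rather than conceptual: I must verify that the case partition of Proposition~\ref{EisensteinEulerCharacteristic} (together with the separately handled trivial case) is compatible with the single uniform formula for $\chi_h$ valid for all even $m_1$, and that the relation $\chi_! = -h_!^3$ is applied with the correct sign so that the cuspidal dimension comes out nonnegative. The parity reduction $m_1$ even is exactly where $\widetilde{\m}_\lambda \neq 0$, so no separate argument is needed for $m_1$ odd. Since the genuine computations — the orbifold Euler characteristics of the centralizers, the traces of the torsion elements, and Harder's Eisenstein cohomology — are already in place, nothing remains beyond this substitution and consistency check.
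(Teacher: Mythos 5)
Your proposal is correct and matches the paper's own (very terse) argument: the theorem is deduced exactly by combining Lemma~\ref{CuspDim} (giving $\dim H_{cusp}^\bullet = h_!^3 = \chi_{Eis}-\chi_h$), Theorem~\ref{Homological formula} for $\chi_h$, and Proposition~\ref{EisensteinEulerCharacteristic} (plus the trivial-coefficient value $\chi_{Eis}=2$) for $\chi_{Eis}$. Your explicit justification of the sign via $\chi_! = -h_!^3$ is a welcome expansion of what the paper leaves implicit.
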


To illustrate the simplicity of this formula, we will give an example. Let $\lambda = 18 \lambda_1 + 10 \lambda_2$, then $m_1 = 18, m_2=10, n_1 = m_1 +m_2 = 28$ and $n_2 =m_2$. In what follows we determine each term,
\begin{itemize}
\item $m_2 = 10 = 12 \cdot 0 + 10$ and $m_1 = 18 = 1 \cdot 12 + 6$. Then $$\chi_{Eis}(\Gamma, \m_\lambda) = F_1(10, 3) = 1.$$
\item $m_2 = 10$ and $\frac{m_1}{2} = 9 \equiv 3$ mod $6$, therefore $$E(m_2, \frac{m_1}{2}) = \frac{43n_1}{432}+\frac{43}{216}$$
\item One has 
\begin{align*}
\tilde{\chi}_A(n_1,  n_2) &= -\frac{1}{720}\frac{1}{6}(n_1+2)(n_2+1)((n_1+2)^2-(n_2+1)^2)
\end{align*}
\item $n_1 \equiv n_2 \equiv 0$ mod $2$, therefore $$\tilde{\chi}_{B, C}(n_1,  n_2) = \frac{7}{144} \frac{(n_1+2)(n_2+1)}{2}$$
\item $n_1 \equiv 3$ and $n_2 \equiv 0$ mod $5$, therefore $$\tilde{\chi}_K(n_1,  n_2) = 0.$$
\item $n_1 \equiv 4$ mod $8$ and $n_2 \equiv 2$ mod $8$, therefore $$\tilde{\chi}_N(n_1,  n_2) = -\frac{1}{2}$$
\end{itemize}
And finally, for $\lambda = 18\lambda_1 + 10\lambda_2$, one has 
\begin{align*}
dim(H_{cusp}^\bullet(\rS_\Gamma, \tm_\lambda \otimes \C)) &= -\left(\frac{43 \cdot 28}{432}+\frac{43}{216}-\frac{1}{720}\frac{1}{6}(30 \cdot 11)(30^2-11^2)+\frac{7}{144} \frac{30 \cdot 11}{2} - \frac{1}{2}\right) + 1 \\
&= 50.
\end{align*}

We finally give a table with the dimensions of the cuspidal cohomology for the representations $\m_\lambda$, for $\lambda = m_1\lambda_1 + m_2 \lambda_2$ with $0 \leq m_1 < 15$ even and $0 \leq m_2 < 15$. One can use Theorem ~\ref{Cuspidal Dim} to easily obtain the dimension of the cuspidal cohomology for any other highest weight, as illustrated in the precedent example.

{
\begin{center}
\scriptsize\renewcommand{\arraystretch}{2}
\begin{longtable}{|c||c|c|c|c|c|c|c|c|c|c|c|c|c|c|c|} 
\hline
$m_1 \backslash m_2$ & 0& 1& 2& 3& 4& 5& 6& 7& 8& 9& 10& 11& 12& 13& 14 \\
\hline
\hline
0 & 0  & 0  & z  & 0  & z-2  & 0  & z-2  & 2  & z-2  & 2  & z-4  & 2  & z-4  & 4 & z-4 \\
\hline
2 & 0  & 0  & 0  & 0  & 0  & 0  & 0  & 0  & 0  & 0  & 0  & 4  & 0  & 8  & 0 \\
\hline
4 & 0  & 0  & 0  & 0  & 0  & 0  & 0  & 4  & 0  & 4  & 0  & 8  & 4  & 12  & 4 \\
\hline
6 & 0  & 0  & 0  & 0  & 0  & 4  & 0  & 4  & 4  & 8  & 4  & 12  & 8  & 20  & 12 \\
\hline
8 & 0  & 0  & 0  & 0  & 0  & 4  & 4  & 8  & 4  & 12  & 8  & 20  & 16  & 28  & 20 \\
\hline
10 & 0  & 2  & 2  & 2  & 2  & 4  & 6  & 12  & 8  & 16  & 12  & 26  & 24  & 38  & 30 \\
\hline
12 & 0  & 0  & 0  & 4  & 4  & 8  & 8  & 16  & 16  & 24  & 20  & 36  & 32  & 52  & 44 \\
\hline
14 & 2  & 2  & 2  & 4  & 6  & 12  & 12  & 20  & 20  & 30  & 28  & 46  & 46  & 66  & 58 \\
\hline
\hline
\caption{Dimension of cuspidal cohomology, for $\lambda = m_1 \lambda_1 + m_2 \lambda_2$}\label{cuspidal}
\end{longtable}
\end{center}
Where in the places $m_1 = 0$ and $m_2 > 0$ even,$``$z$"$ denotes $2(\# \mathcal{Z}_{2m_2 + 4})$.
}

\section{Dimension of $H^q(\Sp(\Z), \m_\lambda)$}\label{Dim Coho}
In this last section we describe the dimensions of the cohomology spaces for $\Gamma$ in every degree. These formulas will depend on the parity of the coefficients of the highest weight $\lambda = m_1 \lambda_1 + m_2 \lambda_2$. Let $h^q(\lambda)$ denote the dimension of $H^q(\Gamma, \m_\lambda)$. If $m_1$ is odd then $H^q(\Gamma, \m_\lambda) = 0$. On the other hand, if $m_1$ is even, one has $h^1(\lambda) = 0$ and:

\begin{eqnarray*}
h^0(\lambda) & = & \left\{ 
\begin{tabular}{rl}
$1,$ & if $m_1 = m_2 =0$ \\
$0,$ & otherwise
\end{tabular}
\right. ,
h^2(\lambda)  =  \left\{ 
\begin{tabular}{rl}
$1,$ & if $m_1 = m_2 =0$ \\
$\# \mathcal{Z}_{2m_2+4},$ & if $m_1 = 0$ and $m_2 \neq 0$ even\\
$0,$ & otherwise 
\end{tabular}
\right. ,\\
\\
h^3(\lambda)  & = & h_{cusp}^3(\lambda) + 
\left\{ 
\begin{tabular}{rl}
$0,$ & if $m_1 = m_2 =0$ \\
dim $\mathcal{S}_{2m_2 + 4} - \# \mathcal{Z}_{2m_2 + 4},$ & if $m_1 = 0$ and $m_2 > 0$ even\\
$1 +$ dim $\mathcal{S}_{m_1+4},$ & if $m_1 > 0$ and $m_2 =0$\\
dim $\mathcal{S}_{m_1+ 2m_2 + 4},$ & if $m_1 > 0$ and $m_2 > 0$ even\\
$1 + 2$dim $\mathcal{S}_{m_2 + 3} +$ dim $\mathcal{S}_{2m_2 + 4},$ & if $m_1 = 0$ and $m_2$ odd\\
$2$dim $\mathcal{S}_{m_1+ m_2 + 3} + $ dim $\mathcal{S}_{m_1+ 2m_2 + 4},$ & if $m_1 > 0$ and $m_2$ odd\\
\end{tabular}
\right. ,\\
\\
h^4(\lambda) & = &
\left\{ 
\begin{tabular}{rl}
$2$dim $\mathcal{S}_{m_2 + 2},$ & if $m_1 = 0$ and $m_2 > 0$ even\\
dim $\mathcal{S}_{m_1 + 2},$ & if $m_1 > 0$ and $m_2 =0$\\
$1 + 2$dim $\mathcal{S}_{m_2 + 2} +$ dim $\mathcal{S}_{m_1 + 2},$ & if $m_1 > 0$ and $m_2 > 0$ even\\
dim $\mathcal{S}_{m_1 + 2},$ & if $m_1 > 0$ and $m_2$ odd\\
$0,$ & otherwise
\end{tabular}
\right..
\end{eqnarray*}

Finally, let $h_{cusp}(\lambda), h_{Eis}(\lambda)$ and $h(\lambda)$ be the dimensions of the whole cuspidal, Eisenstein and the group cohomology of $\Gamma$ with respect to $\m_\lambda$ (i.e. the dimensions of $H_{cusp}^\bullet(\Sp(\Z), \tm_\lambda)$, $H_{Eis}^\bullet(\Sp(\Z), \tm_\lambda)$ and $H^\bullet(\Sp(\Z), \tm_\lambda)$), respectively. In particular one has $h_{cusp}(\lambda) = h_{cusp}^3(\lambda)$, $ h_{Eis}(\lambda) = \sum_{q=0}^4 h_{Eis}^q(\lambda)$ and $h(\lambda) = h_{cusp}(\lambda) + h_{Eis}(\lambda)$. Therefore: 
$$h(\lambda) = h_{cusp}(\lambda) + 
\left\{ 
\begin{tabular}{rl}
$2,$ & if $m_1 = m_2 =0$ \\
$2$dim $\mathcal{S}_{m_2 + 2}+$ dim $\mathcal{S}_{2m_2 + 4},$ & if $m_1 = 0$ and $m_2 > 0$ even\\
dim $\mathcal{S}_{m_1 + 2} + 1 + $ dim $\mathcal{S}_{m_1+4},$ & if $m_1 > 0$ and $m_2 =0$\\
$1 + 2$dim $\mathcal{S}_{m_2 + 2} +$ dim $\mathcal{S}_{m_1 + 2} +$ dim $\mathcal{S}_{m_1+ 2m_2 + 4},$ & if $m_1 > 0$ and $m_2 > 0$ even\\
$1 + 2$dim $\mathcal{S}_{m_2 + 3} +$ dim $\mathcal{S}_{2m_2 + 4},$ & if $m_1 = 0$ and $m_2$ odd\\
dim $\mathcal{S}_{m_1 + 2} + 2$dim $\mathcal{S}_{m_1+ m_2 + 3} +$ dim $\mathcal{S}_{m_1+ 2m_2 + 4},$ & if $m_1 > 0$ and $m_2$ odd\\
\end{tabular}
\right..$$

One has 
\begin{align}
h&(\lambda) = h_{Eis}(\lambda) + h_{cusp}(\lambda) \nonumber \\
&= h_{Eis}(\lambda) + \chi_{Eis}(\Gamma, \m_\lambda) - \left(\tilde{\chi}_A(n_1,  n_2) + \tilde{\chi}_{B,C}(n_1,  n_2) + \tilde{\chi}_K(n_1,  n_2) + \tilde{\chi}_N(n_1,  n_2) + E(m_2, \frac{m_1}{2})\right). \nonumber
\end{align}

The values of $\chi_{Eis}(\Gamma, \m_\lambda) + h_{Eis}(\lambda) = 2(h^0(\lambda) + h^2(\lambda) + h^4(\lambda))$ as a funtion of $\lfloor\frac{m_1}{12}\rfloor$ and $\lfloor\frac{m_2}{12}\rfloor$ will clearly depend on $m_1$ and $m_2$ modulo $12$. We will determine these values in the following matrices.  Consider

\[
G_1=\left( 
\scalemath{0.7}{
\begin{array}{cccccccccccc} 
2k_1 + 4k_2-4 & 2k_1 + 4k_2-2 & 2k_1 + 4k_2-2 & 2k_1 + 4k_2-2 & 2k_1 + 4k_2-2 & 2k_1 + 4k_2\\
2k_1-2 & 2k_1 & 2k_1 & 2k_1 & 2k_1 & 2k_1 + 2\\
2k_1 + 4k_2 & 2k_1 + 4k_2 + 2 & 2k_1 + 4k_2 + 2 & 2k_1 + 4k_2 + 2 & 2k_1 + 4k_2 + 2 & 2k_1 + 4k_2 + 4\\
2k_1-2 & 2k_1 & 2k_1 & 2k_1 & 2k_1 & 2k_1 + 2\\
2k_1 + 4k_2 & 2k_1 + 4k_2 + 2 & 2k_1 + 4k_2 + 2 & 2k_1 + 4k_2 + 2 & 2k_1 + 4k_2 + 2 & 2k_1 + 4k_2 + 4\\
2k_1-2 & 2k_1 & 2k_1 & 2k_1 & 2k_1 & 2k_1 + 2\\
2k_1 + 4k_2 & 2k_1 + 4k_2 + 2 & 2k_1 + 4k_2 + 2 & 2k_1 + 4k_2 + 2 & 2k_1 + 4k_2 + 2 & 2k_1 + 4k_2 + 4\\
2k_1-2 & 2k_1 & 2k_1 & 2k_1 & 2k_1 & 2k_1 + 2\\
2k_1 + 4k_2 & 2k_1 + 4k_2 + 2 & 2k_1 + 4k_2 + 2 & 2k_1 + 4k_2 + 2 & 2k_1 + 4k_2 + 2 & 2k_1 + 4k_2 + 4\\
2k_1-2 & 2k_1 & 2k_1 & 2k_1 & 2k_1 & 2k_1 + 2\\
2k_1 + 4k_2 + 4 & 2k_1 + 4k_2 + 6 & 2k_1 + 4k_2 + 6 & 2k_1 + 4k_2 + 6 & 2k_1 + 4k_2 + 6 & 2k_1 + 4k_2 + 8\\
2k_1-2 & 2k_1 & 2k_1 & 2k_1 & 2k_1 & 2k_1 + 2\\
\end{array} 
}
\right),
\]
the vector:
\[
G_2 = \left( 
\scalemath{0.8}{
\begin{array}{cccccccccccc} 
2k_1 -2, & 2k_1,  & 2k_1, & 2k_1, & 2k_1, & 2k_1 + 2
\end{array} 
}
\right),
\]
and the vector $G_3$ given by
\[
\left( 
\scalemath{0.8}{
\begin{array}{cccccccccccc} 
4k_2-4+z, & 0, & 4k_2+z, & 0, & 4k_2+z, & 0, & 4k_2+z, & 0, & 4k_2+z, & 0, & 4k_2 +4+z, & 0  \end{array} 
}
\right).
\]

\begin{prop}
If $m_1 = 12k_1+l_1$ and $m_2 = 12 k_2 + l_2$, with $k_1, k_2, l_1, l_2 \in \Z$ and $0 \leq l_1, l_2 < 12$ then
$$h_{Eis}(\lambda) + \chi_{Eis}(\Gamma, \m_\lambda) = 
\left\{ 
\begin{tabular}{rl}
$G_1(l_2, \frac{l_1}{2}),$ & $m_1 > 0 \mbox{ and } m_2 > 0$\\
$G_2(\frac{l_1}{2}),$ & $m_1 > 0 \mbox{ and } m_2 = 0$\\
$G_3(l_2),$ & $m_1 = 0 \mbox{ and } m_2 > 0$ \\
\end{tabular} \right. .$$
(where we are numbering the rows of $G_1$ and the entries of $G_3$ from $0$ to $11$ and the columns of $G_1$ and the entries of $G_2$ are numbered from $0$ to $5$).
\end{prop}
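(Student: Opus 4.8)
The plan is to start from the relation recorded just above the statement, namely
\[
h_{Eis}(\lambda) + \chi_{Eis}(\Gamma, \m_\lambda) = 2\bigl(h^0(\lambda) + h^2(\lambda) + h^4(\lambda)\bigr),
\]
so that the entire computation reduces to evaluating the even-degree Betti numbers. This identity itself rests on two facts already available: since the inner (equivalently cuspidal) cohomology is concentrated in degree $3$ by \cite{MokTil2002}, one has $h^q(\lambda) = h_{Eis}^q(\lambda)$ for every even $q$ (because $h^q(\lambda) = h_!^q(\lambda) + h_{Eis}^q(\lambda)$ with $h_!^q = 0$ for $q\neq 3$); and in the sum $h_{Eis}(\lambda) + \chi_{Eis}(\Gamma,\m_\lambda) = \sum_q h_{Eis}^q(\lambda) + \sum_q (-1)^q h_{Eis}^q(\lambda)$ the odd-degree contributions cancel while the even ones double, the degree-$5$ term vanishing since the virtual cohomological dimension of $\Gamma$ is $4$.

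Next I would substitute the explicit descriptions of $h^0(\lambda)$, $h^2(\lambda)$ and $h^4(\lambda)$ obtained earlier in this section, and organise the verification according to the three regimes appearing in the statement: $m_1>0$ and $m_2>0$ (yielding $G_1$), $m_1>0$ and $m_2=0$ (yielding $G_2$), and $m_1=0$ and $m_2>0$ (yielding $G_3$). Within the first and third regimes one separates the subcases $m_2$ even and $m_2$ odd, which is precisely what distinguishes the even-indexed from the odd-indexed rows of $G_1$ and $G_3$. For example, when $m_1,m_2>0$ with $m_2$ even one finds $2(h^0+h^2+h^4) = 2 + 4\dim\mathcal{S}_{m_2+2} + 2\dim\mathcal{S}_{m_1+2}$, whereas for $m_2$ odd only $2\dim\mathcal{S}_{m_1+2}$ survives; in the regime $m_1=0$, $m_2>0$ even one obtains $2\,(\#\mathcal{Z}_{2m_2+4}) + 4\dim\mathcal{S}_{m_2+2}$, which is where the quantity $z = 2(\#\mathcal{Z}_{2m_2+4})$ in $G_3$ enters, while the value is $0$ for $m_2$ odd.

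It then remains to convert the cusp-form dimensions into affine functions of $k_1,k_2$. Writing $m_i = 12k_i + l_i$ with $0\le l_i<12$ and using the formula for $\dim\mathcal{S}_k$ recalled in this section, one checks that $\lfloor (m_i+2)/12\rfloor = k_i$ for $l_i\in\{0,2,4,6,8\}$ and $k_i+1$ for $l_i=10$, whereas the correction $-1$ occurs exactly when the weight $m_i+2 \equiv 2 \bmod 12$, i.e. when $l_i=0$. Hence $\dim\mathcal{S}_{m_i+2}$ equals $k_i-1,\,k_i,\,k_i,\,k_i,\,k_i,\,k_i+1$ according as $l_i=0,2,4,6,8,10$. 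Feeding these values into the expressions of the previous step reproduces, entry by entry, the matrices $G_1$, $G_2$ and $G_3$; for instance the row $l_2=0$, column $\tfrac{l_1}{2}=0$ of $G_1$ gives $2 + 4(k_2-1) + 2(k_1-1) = 2k_1+4k_2-4$, matching $G_1(0,0)$.

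The argument is entirely routine once this reduction is in place; the only points requiring care — and thus the main, if minor, obstacle — are the boundary shift of the floor function at $l_i=10$ together with the simultaneous $-1$ correction at $l_i=0$, and the need to keep the even and odd values of $l_2$ correctly interleaved in the rows of $G_1$ and $G_3$ so that the two parity subcases land in their intended positions.
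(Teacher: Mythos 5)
Your proposal is correct and follows essentially the same route the paper intends: the identity $h_{Eis}(\lambda)+\chi_{Eis}(\Gamma,\m_\lambda)=2(h^0(\lambda)+h^2(\lambda)+h^4(\lambda))$ (stated immediately before the proposition, and which you justify correctly via the concentration of cuspidal cohomology in degree $3$), followed by substitution of the explicit formulas for $h^0,h^2,h^4$ and the conversion $\dim\mathcal{S}_{m_i+2}=k_i-1,k_i,k_i,k_i,k_i,k_i+1$ for $l_i=0,2,4,6,8,10$. Your sample verifications (e.g.\ $G_1(0,0)=2+4(k_2-1)+2(k_1-1)=2k_1+4k_2-4$) match the stated matrices, so the argument is complete.
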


From Theorem~\ref{Homological formula} we can deduce:

\begin{thm} \label{Dim}
If $\lambda = m_1 \lambda_1 + m_2\lambda_2$, with $m_1$ even, then the dimension $dim(H^\bullet(\Sp(\Z), \m_\lambda))$ of the cohomology of $\Sp(\Z)$ is given by
$$-\left(\tilde{\chi}_A(n_1,  n_2) + \tilde{\chi}_{B,C}(n_1,  n_2) + \tilde{\chi}_K(n_1,  n_2) + \tilde{\chi}_N(n_1,  n_2) + E(m_2, \frac{m_1}{2})\right) + 
\left\{ 
\begin{tabular}{rl}
$2,$ & $m_1 = m_2 = 0$\\
$G_1(l_2, \frac{l_1}{2}),$ & $m_1, m_2 > 0$\\
$G_2(\frac{l_1}{2}),$ & $m_1 > m_2 = 0$\\
$G_3(l_2),$ & $m_2 > m_1 = 0$ \\
\end{tabular} \right. .$$
\end{thm}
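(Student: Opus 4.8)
The plan is to reduce Theorem~\ref{Dim} to the additive splitting of the total cohomology into its cuspidal and Eisenstein parts, and then to feed in the three ingredients already established: the cuspidal dimension formula of Lemma~\ref{CuspDim}, the homological Euler characteristic of Theorem~\ref{Homological formula}, and the value of the combined quantity $\chi_{Eis}(\G,\m_\lambda)+h_{Eis}(\lambda)$ supplied by the preceding proposition. No new geometric input is needed; the proof is a formal recombination, with all the genuine content hidden in that proposition.

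First I would recall that, writing $h(\lambda)=\dim H^\bullet(\Sp(\Z),\m_\lambda)$, the decomposition of $H^\bullet(\rS_\G,\tm_\lambda\otimes\C)$ into cuspidal and Eisenstein cohomology, together with the identifications $h_{cusp}(\lambda)=h_{cusp}^3(\lambda)$ and $h_{Eis}(\lambda)=\sum_{q=0}^4 h_{Eis}^q(\lambda)$ recorded just above the statement, gives
\[
h(\lambda)=h_{cusp}(\lambda)+h_{Eis}(\lambda).
\]
Since $m_1$ is assumed even the local system $\tm_\lambda$ does not vanish, so these two pieces account for the whole cohomology.

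Next I would invoke Lemma~\ref{CuspDim}, which yields $h_{cusp}(\lambda)=\chi_{Eis}(\G,\m_\lambda)-\chi_h(\G,\m_\lambda)$. Substituting this into the decomposition gives
\[
h(\lambda)=\bigl(\chi_{Eis}(\G,\m_\lambda)+h_{Eis}(\lambda)\bigr)-\chi_h(\G,\m_\lambda).
\]
I would then replace the two terms on the right by their explicit descriptions. For $\chi_h(\G,\m_\lambda)$ I would use Theorem~\ref{Homological formula}, namely $\chi_h=\tilde{\chi}_A+\tilde{\chi}_{B,C}+\tilde{\chi}_K+\tilde{\chi}_N+E(m_2,\tfrac{m_1}{2})$. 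For the bracketed sum I would use the preceding proposition, which evaluates $\chi_{Eis}(\G,\m_\lambda)+h_{Eis}(\lambda)$ as $G_1(l_2,\tfrac{l_1}{2})$, $G_2(\tfrac{l_1}{2})$, or $G_3(l_2)$ according to which of $m_1,m_2$ vanish, with the value $2$ in the exceptional case $m_1=m_2=0$. Carrying out these two substitutions reproduces verbatim the case-split formula claimed in Theorem~\ref{Dim}, which finishes the argument.

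The only genuine content, and the step I expect to be the main obstacle, is the preceding proposition itself: the computation of $\chi_{Eis}(\G,\m_\lambda)+h_{Eis}(\lambda)=2\bigl(h^0(\lambda)+h^2(\lambda)+h^4(\lambda)\bigr)$ and its packaging into the matrices $G_1,G_2,G_3$. This requires reading the degreewise Eisenstein dimensions $h_{Eis}^q(\lambda)$ off the case analysis of Section~\ref{Eisenstein}, rewriting each in terms of $\dim\mathcal{S}_k$ through the standard cusp-form dimension formula, and then observing that once the leading contributions $\lfloor m_i/12\rfloor$ are separated off, the remaining data depends only on $m_1$ and $m_2$ modulo $12$. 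Keeping the six parity and vanishing subcases straight, and isolating the exceptional value at $m_1=m_2=0$, is where the bookkeeping is delicate; once that proposition is in place, the deduction of Theorem~\ref{Dim} is the purely formal substitution described above.
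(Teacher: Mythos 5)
Your proposal is correct and follows essentially the same route as the paper: the paper likewise writes $h(\lambda)=h_{Eis}(\lambda)+h_{cusp}(\lambda)$, substitutes $h_{cusp}(\lambda)=\chi_{Eis}(\Gamma,\m_\lambda)-\chi_h(\Gamma,\m_\lambda)$ from Lemma~\ref{CuspDim}, replaces $\chi_h$ by the expression in Theorem~\ref{Homological formula}, and evaluates $\chi_{Eis}(\Gamma,\m_\lambda)+h_{Eis}(\lambda)=2(h^0(\lambda)+h^2(\lambda)+h^4(\lambda))$ via the matrices $G_1,G_2,G_3$ of the preceding proposition. You also correctly locate where the real work sits, namely in assembling that proposition from the degreewise Eisenstein dimensions.
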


We give an example of the use of this formula. Let $\lambda = 18 \lambda_1 + 70 \lambda_2$ then $m_1 = 18, m_2=70, n_1 = m_1 +m_2 = 88$ and $n_2 =m_2$.
\begin{itemize}
\item $m_2 = 70 = 12 \cdot 5 + 10$ and $m_1 = 18 = 1 \cdot 12 + 6$. Then $$\chi_{Eis}(\Gamma, \m_\lambda) = G_1(10, 3) = 2+20+6 = 28.$$
\item $m_2 = 70$ and $\frac{m_1}{2} = 9 \equiv 3$ mod $6$, therefore $E(m_2, \frac{m_1}{2}) = E(10, 3) = \frac{43n_1}{432}+\frac{43}{216}$.
\item $\tilde{\chi}_A(n_1,  n_2) = -\frac{1}{720}\frac{1}{6}(n_1+2)(n_2+1)((n_1+2)^2-(n_2+1)^2)$.
\item $n_1 \equiv n_2 \equiv 0$ mod $2$, therefore $\tilde{\chi}_{B, C}(n_1,  n_2) = \frac{7}{144} \frac{(n_1+2)(n_2+1)}{2}$.
\item Ver $n_1 \equiv 3$ and $n_2 \equiv 0$ mod $5$, therefore $\tilde{\chi}_K(n_1,  n_2) = 0$.
\item Ver $n_1 \equiv 0$ mod $8$ and $n_2 \equiv 6$ mod $8$, therefore $\tilde{\chi}_N(n_1,  n_2) = -\frac{1}{2}$.
\end{itemize}
And finally, for $\lambda = 18\lambda_1 + 70\lambda_2$, the dimension $dim(H^\bullet(\Sp(\Z), \m_\lambda))$ is
\[ 
-\left(\frac{43 \cdot 88}{432}+\frac{43}{216}-\frac{1}{720}\frac{1}{6}(90 \cdot 71)(90^2-71^2)+\frac{7}{144} \frac{90 \cdot 71}{2} - \frac{1}{2}\right) + 28 = 4389.
\]

In the following table we give the values of $h(\lambda)$ for $0 \leq m_1 < 15$ even and $0 \leq m_2 < 15$.

\begin{center}
\scriptsize\renewcommand{\arraystretch}{2}
\begin{longtable}{|c||c|c|c|c|c|c|c|c|c|c|c|c|c|c|c|c|c|c|c|c|c|c|c|c|c|c|c|c|c|c|c|} 
\hline
$m_1 \backslash m_2$ & 0& 1& 2& 3& 4& 5& 6& 7& 8& 9& 10& 11& 12& 13& 14 \\
\hline
\hline
0 & 2  & 1  & z  & 1  & z-1 & 1 & z-1 & 4  & z-1 & 6 & z & 4 & z-2 & 9 & z \\
\hline
2 & 1  & 0  & 1  & 1  & 1  & 1  & 2  & 3  & 2  & 2  & 4  & 8  & 3  & 12  & 5 \\
\hline
4 & 1  & 0  & 2  & 0  & 2  & 3  & 2  & 5  & 3  & 7  & 5  & 12  & 7  & 16  & 10 \\
\hline
6 & 1  & 1  & 1  & 3  & 2  & 5  & 2  & 8  & 6  & 12  & 9  & 16  & 11  & 25  & 17 \\
\hline
8 & 2  & 2  & 2  & 1  & 2  & 7  & 7  & 11  & 7  & 16  & 13  & 24  & 20  & 34  & 26 \\
\hline
10 & 2  & 4  & 5  & 6  & 5  & 9  & 9  & 17  & 12  & 21  & 18  & 34  & 28  & 44  & 37 \\
\hline
12 & 2  & 3  & 2  & 7  & 7  & 11  & 11  & 20  & 19  & 30  & 26  & 40  & 36  & 59  & 50 \\
\hline
14 & 5  & 6  & 5  & 9  & 9  & 17  & 16  & 27  & 24  & 36  & 34  & 54  & 51  & 74  & 65 \\
\hline
\caption{Dimension of $H^\bullet(\Sp(\Z), \m_\lambda)$, for $\lambda = m_1\lambda_1 + m_2\lambda_2$}\label{hbullet}
\end{longtable}
%\end{table}
\end{center}

As in the previous section, when $m_1=0$ and $m_2>0$ is even, the ``z" denotes two times the cardinality of $ \mathcal{Z}_{2m_2+4}$.

\section*{Acknowledgements}

The authors would like to thank the Max Planck Institute for Mathematics (MPIM), Bonn, for its hospitality where the idea to pursue this project has been initiated during their visit in December, 2016. 

JB would like to thank the Mathematics Department of the Georg-August University G\"ottingen for the support, and especially to Valentin Blomer, Harald Helfgott and Thomas Schick for their immense support and encouragement during his postdoctoral studies. JB's work is financially supported by ERC Consolidator grant (Grant ID: 648329; codename GRANT). 

IH would like to thank Yves Martin for the long discussions on Siegel modular forms during his visit at the CUNY Graduate Center in Spring 2016.

MM would like to thank LAMA - Universit\'e Paris-Est Marne-la-Vall\'ee for the support and hospitality. MM also wants to thank Institut des Hautes Etudes Scientifiques and Institut Galil\'ee -Universit\'e Paris 13 for their hospitality, as part of this work took place during the stay of the third author in these institutions. MM is also thankful to Michael Harris, Nicolas Andruskiewitsch and Roberto Miatello for their support and encouragement.

Last but not the least, authors would like to extend their thanks to G\"unter Harder for many inspiring discussions on the subject and his support during the writing on this article.
\nocite{}
\bibliographystyle{abbrv}
\bibliography{BHM}

\end{document}